\documentclass[leqno,12pt]{amsart}
\usepackage{amssymb}
\usepackage{amsmath}
\usepackage{enumerate}
\usepackage{amsfonts}
\usepackage{hyperref}

\usepackage[headheight=18pt, top=25mm, bottom=25mm, left=25mm, right=25mm]{geometry}

\usepackage{tikz}

\definecolor{darkgreen}{RGB}{45, 119, 75}

\newtheorem{theorem}{Theorem}[section]
\newtheorem{corollary}[theorem]{Corollary}
\newtheorem{lemma}[theorem]{Lemma}
\newtheorem{proposition}[theorem]{Proposition}
\newtheorem{remark}[theorem]{Remark}

\numberwithin{equation}{section}

\hypersetup{
    colorlinks=true,
    linkcolor= blue,
    citecolor =cyan,
    urlcolor = teal,
}

\begin{document}

\title[Heat kernel estimates]{Upper and lower bounds for Dunkl heat kernel}

\subjclass[2010]{{primary: 44A20, 35K08, 33C52, 43A32, 39A70}}
\keywords{Rational Dunkl theory, heat kernel, root system}

\author[Jacek Dziubański]{Jacek Dziubański}
\author[Agnieszka Hejna]{Agnieszka Hejna}
\begin{abstract} On $\mathbb R^N$ equipped with a normalized root system $R$, a multiplicity function $k(\alpha) > 0$, and the associated measure 
 $$ 
dw(\mathbf x)=\prod_{\alpha\in R}|\langle \mathbf x,\alpha\rangle|^{k(\alpha)}\, d\mathbf x, 
$$ 
let $h_t(\mathbf x,\mathbf y)$ denote the heat kernel of the semigroup generated by the Dunkl Laplace operator $\Delta_k$. Let $d(\mathbf x,\mathbf y)=\min_{\sigma\in G} \| \mathbf x-\sigma(\mathbf y)\|$, where $G$ is the reflection group associated with $R$.   We derive the following upper and lower bounds for $h_t(\mathbf x,\mathbf y)$: for all $c_l>1/4$ and $0<c_u<1/4$ there are constants $C_l,C_u>0$ such that  
$$
 C_{l}w(B(\mathbf{x},\sqrt{t}))^{-1}e^{-c_{l}\frac{d(\mathbf{x},\mathbf{y})^2}{t}} \Lambda(\mathbf x,\mathbf y,t) \leq    h_t(\mathbf{x},\mathbf{y}) \leq  C_{u}w(B(\mathbf{x},\sqrt{t}))^{-1}e^{-c_{u}\frac{d(\mathbf{x},\mathbf{y})^2}{t}} \Lambda(\mathbf x,\mathbf y,t),
$$
where $\Lambda(\mathbf x,\mathbf y,t)$ can be expressed by means of  some   rational functions of  $\| \mathbf x-\sigma(\mathbf y)\|/\sqrt{t}$. An exact formula for $\Lambda(\mathbf x,\mathbf y,t)$ is provided. 
\end{abstract}

\address{Jacek Dziubański, Uniwersytet Wroc\l awski,
Instytut Matematyczny,
Pl. Grunwaldzki 2,
50-384 Wroc\l aw,
Poland}
\email{jdziuban@math.uni.wroc.pl}

\address{Agnieszka Hejna, Uniwersytet Wroc\l awski,
Instytut Matematyczny,
Pl. Grunwaldzki 2,
50-384 Wroc\l aw,
Poland}
\email{hejna@math.uni.wroc.pl}

\thanks{
Research supported by the National Science Centre, Poland (Narodowe Centrum Nauki), Grant 2017/25/B/ST1/00599}

\maketitle

\section{Introduction and statement of the results}\label{Sec:Intro}

{
On the Euclidean space  $\mathbb R^N$ equipped with a normalized root system $R$ and a multiplicity function $k(\alpha) > 0$, let $\Delta_k$ denote the Dunkl Laplace operator (see Section \ref{sec:preliminaries}). Let $dw(\mathbf x)=w(\mathbf x)\, d\mathbf x$ be the associated measure, where 
 \begin{equation}\label{eq:measure}
w(\mathbf x)=\prod_{\alpha\in R}|\langle \mathbf x,\alpha\rangle|^{k(\alpha)}.
\end{equation}
It is well-known that $\Delta_k$ generates a semigroup $\{e^{t\Delta_k}\}_{t \geq 0}$ of linear operators on $L^2(dw)$ which has the form 
\begin{align*}
    e^{t\Delta_k}f(\mathbf x)=\int_{\mathbb{R}^N} h_t(\mathbf x,\mathbf y)f(\mathbf y)\, dw(\mathbf y),
\end{align*}
where $0<h_t(\mathbf x,\mathbf y)$ is a smooth function called the Dunkl heat kernel.

The main goal of this paper is to prove upper and lower bounds for $h_t(\mathbf x,\mathbf y)$. In order to state the result we need to introduce some notation. 

For $\alpha\in R$, let 
\begin{equation}\label{reflection}\sigma_\alpha (\mathbf x)=\mathbf x-2\frac{\langle \mathbf x,\alpha\rangle}{\|\alpha\|^2} \alpha
\end{equation}
stand for  the reflection with respect to the subspace  perpendicular to $\alpha$. Let $G$ denote the Coxeter (reflection)  group generated by the reflections $\sigma_\alpha$, $\alpha\in R$. We define the distance of the orbit of $\mathbf x$ to the orbit of $\mathbf y$ by 
$$ d(\mathbf x,\mathbf y)=\min \{ \| \mathbf x-\sigma (\mathbf y)\|: \sigma \in G\}.$$ 
Obviously,
\begin{align*}
    d(\mathbf x,\mathbf y)=d(\mathbf x,\sigma (\mathbf y)) \quad \text{for all } \mathbf x,\mathbf y\in \mathbb R^N \ \text{\rm and } \sigma \in G.
\end{align*}
It is well known that $d(\mathbf x,\mathbf y)=\|\mathbf x-\sigma (\mathbf y)\|$ if and only if $\mathbf x$ and $\sigma(\mathbf y)$ belong to the same (closed) Weyl chamber (see \cite[Chapter VII, proof of Theorem 2.12]{Helgason}). Let
\begin{align*}
    B(\mathbf x,r)=\{ \mathbf x'\in \mathbb R^N: \| \mathbf x-\mathbf x'\|\leq r\}    
\end{align*}
stands for the Euclidean ball centered at $\mathbf x$ and radius $r$. We denote by $w(B(\mathbf{x},r))$ the $dw$-volume of the ball $B(\mathbf{x},r)$.

For a finite  sequence $\boldsymbol \alpha  =(\alpha_1,\alpha_2,...,\alpha_m)$ of elements of $R$, $\mathbf x,\mathbf y\in \mathbb R^N$ and $t>0$,  let 

\begin{equation}
    \ell(\mathbf{\boldsymbol \alpha}):=m 
\end{equation}
be the length of $\boldsymbol \alpha$, 
\begin{equation}
    \sigma_{\boldsymbol \alpha}:=\sigma_{\alpha_m}\circ \sigma_{\alpha_{m-1}}\circ ...\circ\sigma_{\alpha_1}, 
\end{equation}
and 
 \begin{equation}\label{eq:rho}\begin{split}
    &\rho_{\boldsymbol \alpha}(\mathbf{x},\mathbf{y},t)\\:=&\left(1+\frac{\|\mathbf{x}-\mathbf{y}\|}{\sqrt{t}}\right)^{-2}\left(1+\frac{\|\mathbf{x}-\sigma_{\alpha_1}(\mathbf{y})\|}{\sqrt{t}}\right)^{-2}\left(1+\frac{\|\mathbf{x}-\sigma_{\alpha_2} \circ \sigma_{\alpha_1}(\mathbf{y})\|}{\sqrt{t}}\right)^{-2}\cdot\ldots \cdot \\&\left(1+\frac{\|\mathbf{x}-\sigma_{\alpha_{m-1}} \circ \ldots \circ \sigma_{\alpha_1}(\mathbf{y})\|}{\sqrt{t}}\right)^{-2}.
    \end{split}
\end{equation}

For $\mathbf x,\mathbf y\in\mathbb R^N$, let 
    $ n (\mathbf x,\mathbf y)=0$ if $  d(\mathbf x,\mathbf y)=\| \mathbf x-\mathbf y\|$ and  
    
    \begin{equation}\label{eq:n}
        n(\mathbf x,\mathbf y) = 
    \min\{m\in\mathbb Z: d(\mathbf x,\mathbf y)=\| \mathbf x-\alpha_{m}\circ \ldots \circ \sigma_{\alpha_2}\circ\sigma_{\alpha_1}(\mathbf y)\|,\quad \alpha_j\in R\}     
    \end{equation}
otherwise. In other words, $n(\mathbf x,\mathbf y)$ is the smallest number of reflections $\sigma_\alpha$ which are needed to move $\mathbf y$ to the (closed) Weyl chamber of $\mathbf x$ (see Subsection~\ref{sec:chamber}).  
We also allow  $\boldsymbol{\alpha}$ to be the empty sequence, denoted by $\boldsymbol{\alpha} =\emptyset$. Then for $\boldsymbol{\alpha}=\emptyset$, we set:   $\sigma_{\boldsymbol{\alpha}}=I$ (the identity operator), $\ell(\boldsymbol{\alpha})=0$, and $\rho_{\boldsymbol{\alpha}}(\mathbf{x},\mathbf{y},t)=1$ for all $\mathbf{x},\mathbf{y} \in \mathbb{R}^N$ and $t>0$. 
   
We say that  a finite  sequence $\boldsymbol \alpha  =(\alpha_1,\alpha_2,...,\alpha_m)$ of positive roots is {\it admissible for the pair}  $(\mathbf x,\mathbf y)\in\mathbb R^N\times\mathbb R^N$  if $n(\mathbf{x},\sigma_{\boldsymbol{\alpha}}(\mathbf{y}))=0$. In other words, the composition $\sigma_{\alpha_m}\circ\sigma_{\alpha_{m-1}}\circ \ldots\circ \sigma_{\alpha_1}$ of the reflections $\sigma_{\alpha_j}$ maps $\mathbf y$ to the Weyl chamber of $\mathbf x$. 

The set of the all admissible sequences $\boldsymbol \alpha$ for the pair  $(\mathbf x,\mathbf y)$  will be denoted by $\mathcal A(\mathbf x,\mathbf y)$.  
Note that if $n(\mathbf x,\mathbf y)=0$, then $\boldsymbol{\alpha}=\emptyset \in \mathcal A(\mathbf{x},\mathbf{y})$.

Let us define
\begin{equation}
    \Lambda (\mathbf x,\mathbf y,t):=\sum_{\boldsymbol \alpha \in \mathcal{A}(\mathbf{x},\mathbf{y}), \;\ell (\boldsymbol \alpha) \leq 2|G|} \rho_{\boldsymbol \alpha}(\mathbf x,\mathbf y,t).
\end{equation}

Note that for any $c>1$ and for all $\mathbf{x},\mathbf{y} \in \mathbb{R}^N$ and $t>0$ we have
\begin{equation}\label{eq:Lambda_2t}
c^{-2|G|} \Lambda(\mathbf x,\mathbf y,ct)\leq  \Lambda(\mathbf x,\mathbf y,t)\leq \Lambda(\mathbf x,\mathbf y,ct).
\end{equation}

We are now in a position to state our main result about upper and lower bounds for the Dunkl heat  kernel which are given by means of $w$-volumes of the Euclidean balls, the function   $\Lambda(\mathbf{x},\mathbf{y},t)$,  and $d(\mathbf x,\mathbf y)$. 

\begin{theorem}\label{teo:1}
Assume that $0<c_{u}<1/4$ and $c_l>1/4$. There are constants $C_{u},C_{l}>0$ such that for all $\mathbf{x},\mathbf{y} \in \mathbb{R}^N$ and $t>0$ we have 
\begin{equation}\label{eq:main_lower}
 C_{l}w(B(\mathbf{x},\sqrt{t}))^{-1}e^{-c_{l}\frac{d(\mathbf{x},\mathbf{y})^2}{t}} \Lambda(\mathbf x,\mathbf y,t) \leq    h_t(\mathbf{x},\mathbf{y}), 
\end{equation}
\begin{equation}\label{eq:main_claim}
    h_t(\mathbf{x},\mathbf{y}) \leq C_{u}w(B(\mathbf{x},\sqrt{t}))^{-1}e^{-c_{u}\frac{d(\mathbf{x},\mathbf{y})^2}{t}} \Lambda(\mathbf x,\mathbf y,t).
\end{equation}
\end{theorem}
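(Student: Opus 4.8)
The plan is to deduce the sharp two-sided bounds from known Gaussian-type estimates for $h_t$ together with a careful analysis of the combinatorial sum $\Lambda(\mathbf x,\mathbf y,t)$. First I would recall the qualitative bounds already available in the Dunkl literature (Anker--Dziubański--Hejna and related work): there are constants $C,c,c'>0$ with
\begin{equation*}
c\, w(B(\mathbf x,\sqrt t))^{-1}\,\Bigl(\sum_{\sigma\in G}\bigl(1+\tfrac{\|\mathbf x-\sigma(\mathbf y)\|}{\sqrt t}\bigr)^{-?}\Bigr)\,e^{-\frac{d(\mathbf x,\mathbf y)^2}{t}}\le h_t(\mathbf x,\mathbf y)\le C\, w(B(\mathbf x,\sqrt t))^{-1}\,e^{-c'\frac{d(\mathbf x,\mathbf y)^2}{t}},
\end{equation*}
so the whole problem is to identify the correct polynomial correction factor and to see that it is exactly $\Lambda$. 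The key algebraic input is the relation between $w(B(\mathbf x,r))$ and $w(B(\mathbf y,r))$: since $dw$ is $G$-invariant and doubling, one has $w(B(\mathbf x,\sqrt t))\sim w(B(\sigma(\mathbf y),\sqrt t))\cdot\prod(\text{ratios along a chain of reflections})$, and each such ratio is comparable to a factor $\bigl(1+\|\mathbf x-\sigma_{\alpha_j}\circ\cdots\circ\sigma_{\alpha_1}(\mathbf y)\|/\sqrt t\bigr)^{\pm c(\alpha)}$; squaring and bounding exponents by $2|G|$ is what forces the truncation $\ell(\boldsymbol\alpha)\le 2|G|$ in the definition of $\Lambda$.

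Next I would prove the upper bound \eqref{eq:main_claim}. Start from a Gaussian upper estimate with a bad constant, say $h_t(\mathbf x,\mathbf y)\le C\,w(B(\mathbf x,\sqrt t))^{-1}e^{-c'd(\mathbf x,\mathbf y)^2/t}$ with $c'$ close to $1/4$, and then interpolate: write $e^{-c'd^2/t}=e^{-c_u d^2/t}\cdot e^{-(c'-c_u)d^2/t}$ and absorb the surplus exponential into the polynomial factor, using that $e^{-\varepsilon s^2}\lesssim (1+s)^{-M}$ for any $M$. The point is to show $e^{-(c'-c_u)d(\mathbf x,\mathbf y)^2/t}\lesssim\Lambda(\mathbf x,\mathbf y,t)$: since the empty sequence is always admissible when $n(\mathbf x,\mathbf y)=0$ and otherwise a minimal admissible chain of length $n(\mathbf x,\mathbf y)\le|G|$ exists, one term of $\Lambda$ is $\rho_{\boldsymbol\alpha}$ along that chain, and along it each intermediate distance is controlled by $d(\mathbf x,\mathbf y)$ up to $G$-geometry; hence $\rho_{\boldsymbol\alpha}(\mathbf x,\mathbf y,t)\gtrsim(1+d(\mathbf x,\mathbf y)/\sqrt t)^{-2n(\mathbf x,\mathbf y)}\gtrsim e^{-(c'-c_u)d(\mathbf x,\mathbf y)^2/t}$ after possibly shrinking $c_u$. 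Combining with the volume comparison above gives \eqref{eq:main_claim}; the scaling property \eqref{eq:Lambda_2t} is used to trade $w(B(\mathbf x,\sqrt t))$ and $w(B(\mathbf x,\sqrt{ct}))$ freely.

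For the lower bound \eqref{eq:main_lower} I would argue by induction on $n(\mathbf x,\mathbf y)$. The base case $n=0$ (so $d(\mathbf x,\mathbf y)=\|\mathbf x-\mathbf y\|$, $\mathbf x,\mathbf y$ in the same chamber) follows from the standard lower Gaussian estimate for $h_t$ away from the walls, with $\Lambda\sim1$ there. For the inductive step, use a reflection identity / perturbation: if $\mathbf y$ is near the wall fixed by $\sigma_\alpha$, write $h_t(\mathbf x,\mathbf y)\ge h_t(\mathbf x,\sigma_\alpha(\mathbf y))\cdot(\text{correction})$ or use the known decomposition of $h_t$ into a "Euclidean part" plus a positive remainder, plus the semigroup property $h_{2t}(\mathbf x,\mathbf y)=\int h_t(\mathbf x,\mathbf z)h_t(\mathbf z,\mathbf y)\,dw(\mathbf z)$ restricted to a well-chosen region of $\mathbf z$. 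Each reflection step contributes exactly one factor $(1+\|\cdots\|/\sqrt t)^{-2}$, rebuilding $\rho_{\boldsymbol\alpha}$ term by term, and the process terminates after at most $|G|$ steps, staying within the truncation $\ell(\boldsymbol\alpha)\le2|G|$. Summing over the finitely many admissible chains produced this way gives $\Lambda$ on the right.

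The main obstacle I anticipate is the lower bound: one must show that \emph{every} admissible sequence's contribution $\rho_{\boldsymbol\alpha}$ is genuinely dominated by $h_t$ (not just one of them), which requires a delicate iteration near the walls where the measure $dw$ degenerates, and one must control the interplay between the length truncation $2|G|$, the exponent $2$ in $\rho_{\boldsymbol\alpha}$, and the doubling constant of $dw$ so that no admissible term is lost and none is overcounted. Making the reflection-perturbation step quantitative uniformly in the distance to the wall — and checking that chains longer than $2|G|$ are redundant because their $\rho_{\boldsymbol\alpha}$ is already majorized by a shorter admissible chain — is the technical heart of the argument.
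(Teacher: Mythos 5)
Your upper-bound argument contains a fatal error. You propose to start from the plain Gaussian bound $h_t(\mathbf x,\mathbf y)\le C\,w(B(\mathbf x,\sqrt t))^{-1}e^{-c'd(\mathbf x,\mathbf y)^2/t}$ and then show $e^{-(c'-c_u)d(\mathbf x,\mathbf y)^2/t}\lesssim\Lambda(\mathbf x,\mathbf y,t)$, claiming that along a minimal admissible chain ``each intermediate distance is controlled by $d(\mathbf x,\mathbf y)$,'' so that $\rho_{\boldsymbol\alpha}(\mathbf x,\mathbf y,t)\gtrsim(1+d(\mathbf x,\mathbf y)/\sqrt t)^{-2n(\mathbf x,\mathbf y)}$. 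This is false: every factor of $\rho_{\boldsymbol\alpha}$ is of the form $(1+\|\mathbf x-\sigma(\mathbf y)\|/\sqrt t)^{-2}$ with $\|\mathbf x-\sigma(\mathbf y)\|\ge d(\mathbf x,\mathbf y)$, so the inequality goes the wrong way, and the intermediate distances can be arbitrarily larger than $d$. Concretely, take $\mathbf y=\sigma(\mathbf x)$ for a rotation $\sigma\ne\mathrm{id}$ with $\|\mathbf x\|/\sqrt t$ large: then $d(\mathbf x,\mathbf y)=0$, so your left-hand side equals $1$, while every admissible sequence is nonempty and $\Lambda(\mathbf x,\mathbf y,t)\lesssim(1+\|\mathbf x-\mathbf y\|/\sqrt t)^{-2}\to0$. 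The whole point of \eqref{eq:main_claim} is that it is \emph{strictly sharper} than any bound expressible through $d(\mathbf x,\mathbf y)$ alone, so it cannot be obtained by post-processing such a bound. The paper instead derives a recursive inequality $h_t(\mathbf x,\mathbf y)\le C_1\bigl(w(B(\mathbf x,\sqrt t))^{-1}e^{-c_1\|\mathbf x-\mathbf y\|^2/t}+(1+\|\mathbf x-\mathbf y\|/\sqrt t)^{-2}\sum_{\alpha\in R}h_t(\mathbf x,\sigma_\alpha(\mathbf y))\bigr)$ from R\"osler's formula combined with the exact identity for $\partial_t h_t$ in terms of $\sum_\alpha k(\alpha)h_t(\mathbf x,\sigma_\alpha(\mathbf y))$, and then iterates it over the orbit $\{\sigma(\mathbf y)\}_{\sigma\in G}$ ordered by decreasing value of $h_t(\mathbf x,\sigma(\mathbf y))$, which is where the truncation $\ell(\boldsymbol\alpha)\le 2|G|$ actually comes from (not from volume ratios, which play no role).

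Your lower-bound sketch is closer to the mark: the inequality $h_t(\mathbf x,\mathbf y)\gtrsim(1+\|\mathbf x-\mathbf y\|/\sqrt t)^{-2}h_t(\mathbf x,\sigma_\alpha(\mathbf y))$ iterated along an admissible chain, terminating with the Gaussian lower bound once $\sigma_{\boldsymbol\alpha}(\mathbf y)$ reaches the chamber of $\mathbf x$, is exactly the paper's Proposition \ref{propo:lower_one}. But you leave the key step --- where this one-reflection inequality comes from --- as a menu of possibilities (``reflection identity / perturbation \ldots or \ldots semigroup property restricted to a well-chosen region''), none of which is carried out; the actual source is the same differential identity \eqref{eq:studia} used for the upper bound, which gives \eqref{eq:lower_1} with no positivity or near-wall analysis needed. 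Also, your worry about whether \emph{every} admissible $\rho_{\boldsymbol\alpha}$ is dominated by $h_t$ is resolved trivially: the iteration applies to each admissible sequence separately and the sum defining $\Lambda$ has a bounded number of terms. As it stands, the proposal establishes neither bound.
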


Let us remark that this way of expressing estimates of the heat kernel is convenient in handling real harmonic analysis problems, because it allows us to apply methods from analysis on spaces of homogeneous type in the sense of Coifman and Weiss.

 The proof of the theorem is based on an iteration procedure. In order to illustrate the method we start by proving upper and lower bounds for $h_t(\mathbf x,\mathbf y)$ in  the case where the root system is associated with symmetries of a regular $m$-sided polygon in $\mathbb R^2$, e.g. when $G$ is the  dihedral group. In this case the formulation of the estimates and they proofs are much simpler. 

\begin{theorem}\label{th:dihedral}
Assume that $G$ is a group of symmetries of a regular $m$-sided polygon  in $\mathbb R^2$ centered at the origin and let $R$ be the associated root system. Set 
$$ \Lambda_D(\mathbf x,\mathbf y,t):=
\begin{cases}1 \quad &\text{ if }  n(\mathbf x,\mathbf y)=0,\\
\Big(1+\frac{\| \mathbf x-\mathbf y\|}{\sqrt{t}}\Big)^{-2} \quad &\text{ if }  n(\mathbf x,\mathbf y)=1,\\
\Big(1+\frac{\| \mathbf x-\mathbf y\|}{\sqrt{t}}\Big)^{-2} \sum_{\alpha\in {R_{+}}}\Big(1+\frac{\| \mathbf x-\mathbf \sigma_{\alpha}(\mathbf y)\|}{\sqrt{t}}\Big)^{-2} \quad & \text{ if }  n(\mathbf x,\mathbf y)=2. \end{cases} $$ 
Let $0<c_{u}<1/4$ and $c_l>1/4$. There are constants $C_{u},C_{l}>0$ such that for all $\mathbf{x},\mathbf{y} \in \mathbb{R}^N$ and $t>0$ we have 
    \begin{equation}\label{eq:main_lower_D}
 C_{l}w(B(\mathbf{x},\sqrt{t}))^{-1}e^{-c_{l}\frac{d(\mathbf{x},\mathbf{y})^2}{t}} \Lambda_D(\mathbf x,\mathbf y,t) \leq    h_t(\mathbf{x},\mathbf{y}), 
\end{equation}
\begin{equation}\label{eq:main_claim_D}
    h_t(\mathbf{x},\mathbf{y}) \leq C_{u}w(B(\mathbf{x},\sqrt{t}))^{-1}e^{-c_{u}\frac{d(\mathbf{x},\mathbf{y})^2}{t}} \Lambda_D(\mathbf x,\mathbf y,t).
\end{equation}
\end{theorem}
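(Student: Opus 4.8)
The plan is to set up an iteration scheme that reduces the estimate for $h_t(\mathbf x,\mathbf y)$ to the known Gaussian bounds for a "one reflection" model, exploiting the fact that for the dihedral group the distance $d(\mathbf x,\mathbf y)$ is realized after at most $\lfloor |G|/2\rfloor$ reflections and, more importantly, that $n(\mathbf x,\mathbf y)\le 2$ whenever $\mathbf y$ is close (in the scale $\sqrt t$) to the chamber of $\mathbf x$ but the Gaussian factor has not already killed the kernel. First I would recall (or prove) the basic "global" estimate coming from the explicit form of the Dunkl heat kernel $h_t(\mathbf x,\mathbf y)=\tau_{\mathbf x}h_t^{\mathrm{rad}}(-\mathbf y)$ together with the fact that the Dunkl translation of a Gaussian is controlled, giving an a priori bound of the form $h_t(\mathbf x,\mathbf y)\lesssim w(B(\mathbf x,\sqrt t))^{-1}\sum_{\sigma\in G}e^{-c\|\mathbf x-\sigma(\mathbf y)\|^2/t}$ for any $c<1/4$, and the matching lower bound $h_t(\mathbf x,\mathbf y)\gtrsim w(B(\mathbf x,\sqrt t))^{-1}e^{-c\|\mathbf x-\mathbf y\|^2/t}$ for any $c>1/4$ (which is the well-known sharp Gaussian bounds for the Dunkl heat kernel). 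These crude bounds already handle the region where $d(\mathbf x,\mathbf y)\gtrsim\sqrt t$: there $\Lambda_D$ is comparable to a product of a bounded number of factors each of size $(\sqrt t/\|\mathbf x-\sigma_\bullet(\mathbf y)\|)^2$, and the exponential gain $e^{-(c_l-c_u)d^2/t}$ one can trade between the two constants dominates every such polynomial loss.

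The substantive case is therefore $d(\mathbf x,\mathbf y)\le\sqrt t$, i.e.\ $\mathbf y$ (after moving to $\mathbf x$'s chamber) is within $\sqrt t$ of $\mathbf x$, where $\Lambda_D$ is genuinely $\asymp 1$ and one must show $h_t(\mathbf x,\mathbf y)\asymp w(B(\mathbf x,\sqrt t))^{-1}$. Here I would use the semigroup/perturbation identity relating $h_t(\mathbf x,\mathbf y)$ to the Euclidean heat kernel plus a correction supported on configurations where some $\langle \mathbf x,\alpha\rangle\langle\mathbf y,\alpha\rangle<0$; writing $h_t=p_t+r_t$ with $p_t$ the classical Gaussian (times the density normalization) and estimating $r_t$ by the Duhamel/iteration formula $r_t=\int_0^t \sum_{\alpha\in R_+}(\text{boundary term at }\langle\cdot,\alpha\rangle=0)\,ds$. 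Each application of Duhamel produces one reflection $\sigma_\alpha$ and one factor $\bigl(1+\|\mathbf x-\sigma_{\boldsymbol\alpha}(\mathbf y)\|/\sqrt t\bigr)^{-2}$, which is exactly the bookkeeping encoded in $\rho_{\boldsymbol\alpha}$; the key point specific to the dihedral case is that after at most two steps one lands back in the chamber of $\mathbf x$, so the iteration terminates and the three cases $n(\mathbf x,\mathbf y)=0,1,2$ in the definition of $\Lambda_D$ enumerate precisely the admissible sequences of length $\le 2$. For the lower bound in these cases I would instead use positivity of the kernel and a direct chaining argument: go from $\mathbf y$ to $\sigma_{\alpha_1}(\mathbf y)$ to the chamber of $\mathbf x$ in two heat-kernel steps of time $t/3$ each, lower-bounding each step by the Gaussian lower bound and using that one may insert an intermediate point near the reflecting hyperplane at distance $\asymp\|\mathbf x-\sigma_{\alpha}(\mathbf y)\|$.

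The main obstacle I anticipate is controlling the Duhamel remainder $r_t$ with the \emph{correct polynomial weight} rather than just an exponential: one needs the bound on the boundary flux through each hyperplane $\langle\cdot,\alpha\rangle=0$ to produce the factor $\bigl(1+\|\mathbf x-\sigma_{\boldsymbol\alpha}(\mathbf y)\|/\sqrt t\bigr)^{-2}$ uniformly, including the delicate regime where $\mathbf x$ or $\mathbf y$ itself is close to a wall (so that several $\|\mathbf x-\sigma_\bullet(\mathbf y)\|$ are comparable and the measure $w$ degenerates). Handling this requires careful estimates on $w(B(\mathbf x,\sqrt t))$ versus $w(B(\mathbf x',\sqrt t))$ for $\mathbf x'$ on the path, using the doubling property and the explicit product structure of $w$ in $\mathbb R^2$, and a case analysis according to which of the finitely many walls $\mathbf x$ and $\mathbf y$ are near. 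Once this weighted Duhamel estimate is in place, assembling the upper bound is a finite sum over $\boldsymbol\alpha$ with $\ell(\boldsymbol\alpha)\le 2$, matching $\Lambda_D$, and the lower bound follows from the chaining argument together with the observation that the dominant admissible sequence already appears in the sum defining $\Lambda_D$.
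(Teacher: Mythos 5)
Your plan has the right general shape (reduce to a one-step recursion that trades one reflection for one factor $\bigl(1+\|\mathbf x-\cdot\|/\sqrt t\bigr)^{-2}$, then iterate at most twice since $n(\mathbf x,\mathbf y)\le 2$ for dihedral groups), but two of its load-bearing claims are false, and the key lemma that makes the argument work is missing. First, the reduction to the region $d(\mathbf x,\mathbf y)\lesssim\sqrt t$ does not work: you assert that for $d(\mathbf x,\mathbf y)\gtrsim\sqrt t$ the exponential gain $e^{-(c-c_u)d(\mathbf x,\mathbf y)^2/t}$ absorbs the polynomial factors, and that for $d(\mathbf x,\mathbf y)\le\sqrt t$ one has $\Lambda_D\asymp1$. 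Both statements fail because $\Lambda_D$ is built from $\|\mathbf x-\mathbf y\|$ and $\|\mathbf x-\sigma_\alpha(\mathbf y)\|$, which can be arbitrarily large while $d(\mathbf x,\mathbf y)=0$ (take $\mathbf y=\sigma(\mathbf x)$ for a rotation $\sigma$ and $\|\mathbf x\|$ large); then $e^{-(c-c_u)d(\mathbf x,\mathbf y)^2/t}=1$ while $\Lambda_D\le\bigl(1+\|\mathbf x-\mathbf y\|/\sqrt t\bigr)^{-2}\to0$, so no trading of exponential constants can produce the required polynomial decay, and the ``substantive case'' is exactly the one your dichotomy dismisses. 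Second, your lower bound by Chapman--Kolmogorov chaining through an intermediate point near the reflecting wall, with each step bounded below only by the Gaussian $e^{-c\|\cdot\|^2/s}$, cannot close: the intermediate point is necessarily at Euclidean distance $\gtrsim\|\mathbf x-\mathbf y\|$ from one of the endpoints, so the product of the two Gaussian lower bounds is exponentially small in $\|\mathbf x-\mathbf y\|^2/t$, whereas the theorem demands the far larger polynomial quantity $\bigl(1+\|\mathbf x-\mathbf y\|/\sqrt t\bigr)^{-2}\bigl(1+\|\mathbf x-\sigma_\alpha(\mathbf y)\|/\sqrt t\bigr)^{-2}$.

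What is missing is the exact identity that replaces your Duhamel expansion. Combining the formula $\partial_th_t(\mathbf x,\mathbf y)=\frac{\|\mathbf x-\mathbf y\|^2}{4t^2}h_t-\frac N{2t}h_t-\frac1{2t}\sum_{\alpha\in R}k(\alpha)h_t(\mathbf x,\sigma_\alpha(\mathbf y))$ with the time derivative computed from R\"osler's representation \eqref{heat:Rosler} gives
\begin{equation*}
\Big(2\mathbf N-2N+\frac{\|\mathbf x-\mathbf y\|^2}{t}\Big)h_t(\mathbf x,\mathbf y)=4tI_2(t,\mathbf x,\mathbf y)+2\sum_{\alpha\in R}k(\alpha)h_t(\mathbf x,\sigma_\alpha(\mathbf y)),\qquad I_2\ge0,
\end{equation*}
and this single identity yields the two-sided recursion of Proposition~\ref{propo:basic}: dropping $I_2$ gives the polynomial \emph{lower} recursion (the polynomial lower bound comes from the nonlocal part of $\Delta_k$, not from chaining), while splitting the integral defining $I_2$ according to whether $A(\mathbf x,\mathbf y,\eta)^2\le(1-\varepsilon)\|\mathbf x-\mathbf y\|^2$ gives the upper recursion --- which is precisely the ``weighted Duhamel estimate'' you correctly identify as the main obstacle but do not supply. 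Once that recursion is in hand, the dihedral theorem is the two-step iteration you describe; without it, neither inequality is proved.
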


\ 

Theorems~\ref{teo:1} and~\ref{th:dihedral} can be consider as improvements of the following estimates
\begin{align}\label{eq:heat_JFAA}
   {C^{-1}}w(B(\mathbf{x},\sqrt{t}))^{-1} e^{-c^{-1}\|\mathbf{x}-\mathbf{y}\|^2/t}\leq h_t(\mathbf{x},\mathbf{y}) \leq {C} w(B(\mathbf{x},\sqrt{t}))^{-1}  e^{-{c}d(\mathbf{x},\mathbf{y})^2/t}
\end{align}
obtained in~{\cite[Theorems 3.1 and 4.4]{ADzH}}  (see Section~\ref{sec:auxiliary} for more details).
}

\section{Preliminaries and notation}\label{sec:preliminaries}

\subsection{Basic definitions of the Dunkl theory}

In this section we present basic facts concerning the theory of the Dunkl operators.   For more details we refer the reader to~\cite{Dunkl},~\cite{Roesler3}, and~\cite{Roesler-Voit}. 

We consider the Euclidean space $\mathbb R^N$ with the scalar product $\langle \mathbf{x},\mathbf y\rangle=\sum_{j=1}^N x_jy_j
$, where $\mathbf x=(x_1,...,x_N)$, $\mathbf y=(y_1,...,y_N)$, and the norm $\| \mathbf x\|^2=\langle \mathbf x,\mathbf x\rangle$.

A {\it normalized root system}  in $\mathbb R^N$ is a finite set  $R\subset \mathbb R^N\setminus\{0\}$ such that $R \cap \alpha \mathbb{R} = \{\pm \alpha\}$,  $\sigma_\alpha (R)=R$, and $\|\alpha\|=\sqrt{2}$ for all $\alpha\in R$, where $\sigma_\alpha$ is defined by \eqref{reflection}. Each root system
can be written as a disjoint union $R = R_{+} \cup -R_{+}$, where $R_{+}$, $-R_{+}$ are separated by a hyperplane through the origin. Such a set $R_{+}$ is called a \textit{positive subsystem}. Its choice is not unique. In this paper, we will work with fixed root system $R_{+}$.

The finite group $G$ generated by the reflections $\sigma_{\alpha}$, $\alpha \in R$ is called the {\it Coxeter group} ({\it reflection group}) of the root system. Clearly, $|G|\geq |R|$.

A~{\textit{multiplicity function}} is a $G$-invariant function $k:R\to\mathbb C$ which will be fixed and $> 0$  throughout this paper.  

Let $\mathbf{N}=N+\sum_{\alpha \in R}k(\alpha)$. Then, 
\begin{align*} w(B(t\mathbf x, tr))=t^{\mathbf{N}}w(B(\mathbf x,r)) \ \ \text{\rm for all } \mathbf x\in\mathbb R^N, \ t,r>0,
\end{align*}
where $w$ is the associated measure defined in \eqref{eq:measure}. 
Observe that there is a constant $C>0$ such that for all $\mathbf{x} \in \mathbb{R}^N$ and $r>0$ we have
\begin{equation}\label{eq:balls_asymp}
C^{-1}w(B(\mathbf x,r))\leq  r^{N}\prod_{\alpha \in R} (|\langle \mathbf x,\alpha\rangle |+r)^{k(\alpha)}\leq C w(B(\mathbf x,r)),
\end{equation}
so $dw(\mathbf x)$ is doubling, that is, there is a constant $C>0$ such that
\begin{equation}\label{eq:doubling} w(B(\mathbf x,2r))\leq C w(B(\mathbf x,r)) \ \ \text{ for all } \mathbf x\in\mathbb R^N, \ r>0.
\end{equation}

For $\xi \in \mathbb{R}^N$, the {\it Dunkl operators} $T_\xi$  are the following $k$-deformations of the directional derivatives $\partial_\xi$ by   difference operators:
\begin{align*}
     T_\xi f(\mathbf x)= \partial_\xi f(\mathbf x) + \sum_{\alpha\in R} \frac{k(\alpha)}{2}\langle\alpha ,\xi\rangle\frac{f(\mathbf x)-f(\sigma_\alpha(\mathbf{x}))}{\langle \alpha,\mathbf x\rangle}.
\end{align*}
The Dunkl operators $T_{\xi}$, which were introduced in~\cite{Dunkl}, commute and are skew-symmetric with respect to the $G$-invariant measure $dw$.

 Let us denote $T_j=T_{e_j}$, where $\{e_j\}_{1 \leq j \leq N}$ is a canonical orthonormal  basis of $\mathbb{R}^N$.

For fixed $\mathbf y\in\mathbb R^N$ the {\it Dunkl kernel} $E(\mathbf x,\mathbf y)$ is the unique analytic solution to the system
\begin{align*}
    T_\xi f=\langle \xi,\mathbf y\rangle f, \ \ f(0)=1.
\end{align*}
The function $E(\mathbf x ,\mathbf y)$, which generalizes the exponential  function $e^{\langle \mathbf x,\mathbf y\rangle}$, has a  unique extension to a holomorphic function on $\mathbb C^N\times \mathbb C^N$.

\subsection{Dunkl Laplacian and Dunkl heat semigroup} The {\it Dunkl Laplacian} associated with $R$ and $k$  is the differential-difference operator $\Delta_k=\sum_{j=1}^N T_{j}^2$, which  acts on $C^2(\mathbb{R}^N)$-functions by\begin{align*}
    \Delta_k f(\mathbf x)=\Delta_{\rm eucl} f(\mathbf x)+\sum_{\alpha\in R} k(\alpha) \delta_\alpha f(\mathbf x),
\end{align*}
\begin{align*}
    \delta_\alpha f(\mathbf x)=\frac{\partial_\alpha f(\mathbf x)}{\langle \alpha , \mathbf x\rangle} - \frac{\|\alpha\|^2}{2} \frac{f(\mathbf x)-f(\sigma_\alpha (\mathbf x))}{\langle \alpha, \mathbf x\rangle^2}.
\end{align*}
The operator $\Delta_k$ is essentially self-adjoint on $L^2(dw)$ (see for instance \cite[Theorem\;3.1]{AH}) and generates a semigroup $H_t$  of linear self-adjoint contractions on $L^2(dw)$. The semigroup has the form
  \begin{align*}
  H_t f(\mathbf x)=\int_{\mathbb R^N} h_t(\mathbf x,\mathbf y)f(\mathbf y)\, dw(\mathbf y),
  \end{align*}
  where the heat kernel
  \begin{equation}\label{eq:heat_formula}
      h_t(\mathbf x,\mathbf y)={\boldsymbol c}_k^{-1} (2t)^{-\mathbf{N}/2}E\Big(\frac{\mathbf x}{\sqrt{2t}}, \frac{\mathbf y}{\sqrt{2t}}\Big)e^{-(\| \mathbf x\|^2+\|\mathbf y\|^2)\slash (4t)}
  \end{equation}
  is a $C^\infty$-function of all variables $\mathbf x,\mathbf y \in \mathbb{R}^N$, $t>0$, and satisfies \begin{align*} 0<h_t(\mathbf x,\mathbf y)=h_t(\mathbf y,\mathbf x).
  \end{align*}
Here and subsequently, 
\begin{align*}
    {\boldsymbol c}_k=\int_{\mathbb R^N} e^{-\| \mathbf x\|^2/2}\, dw(\mathbf x).
\end{align*}

The following specific formula for the Dunkl heat kernel was obtained by R\"osler \cite{Roesler2003}:
\begin{equation}\label{heat:Rosler}
h_t(\mathbf x,\mathbf y)={\boldsymbol c}_k^{-1}2^{-\mathbf{N}/2} t^{-\mathbf{N}/2}\int_{\mathbb R^N}  \exp(-A(\mathbf x, \mathbf y, \eta)^2/4t)\,d\mu_{\mathbf{x}}(\eta)\text{ for all }\mathbf{x},\mathbf{y}\in\mathbb{R}^N, 
 t>0.
\end{equation}
Here
\begin{equation}\label{eq:A}
A(\mathbf{x},\mathbf{y},\eta)=\sqrt{{\|}\mathbf{x}{\|}^2+{\|}\mathbf{y}{\|}^2-2\langle \mathbf{y},\eta\rangle}=\sqrt{{\|}\mathbf{x}{\|}^2-{\|}\eta{\|}^2+{\|}\mathbf{y}-\eta{\|}^2}
\end{equation}
and $\mu_{\mathbf{x}}$ is a probability measure,
which is supported in the convex hull  $\operatorname{conv}\mathcal{O}(\mathbf{x})$ of the orbit  $\mathcal O(\mathbf x) =\{\sigma(\mathbf x): \sigma \in G\}$. 

One can easily check that 
\begin{equation}\label{eq:d_A}
    d(\mathbf x,\mathbf y)\leq A(\mathbf{x},\mathbf{y},\eta)\quad \text{for all } \eta \in \operatorname{conv}\mathcal{O}(\mathbf{x}). \end{equation}

{
\subsection{Weyl chambers and its properties}\label{sec:chamber}

The closures of connected components of
\begin{equation*}
    \{\mathbf{x} \in \mathbb{R}^{N}\;:\; \langle \mathbf{x},\alpha\rangle \neq 0 \text{ for all }\alpha \in R\}
\end{equation*}
are called (closed) \textit{Weyl chambers}. Below we present some properties of the reflections and the Weyl chambers, which will be used in next sections.

\begin{lemma} Fix $\mathbf x,\mathbf y\in\mathbb R^N$ and $\sigma \in G$. Then  $d(\mathbf x,\mathbf y)=\| \mathbf{x}-\sigma(\mathbf y)\|$ if and only if  $\sigma(\mathbf y)$ and $\mathbf x$ belong to the same Weyl chamber. 
\end{lemma}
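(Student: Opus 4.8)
The plan is to convert the statement into a comparison of inner products and then read off both implications from the elementary geometry of the reflection group. Write $z:=\sigma(\mathbf y)$, so that the orbit $\mathcal O(\mathbf y)=\{\tau(z):\tau\in G\}$ and $d(\mathbf x,\mathbf y)=\min_{\tau\in G}\|\mathbf x-\tau(z)\|$. Since each $\tau\in G$ is orthogonal, $\|\tau(z)\|=\|z\|$, whence $\|\mathbf x-\tau(z)\|^{2}=\|\mathbf x\|^{2}+\|z\|^{2}-2\langle\mathbf x,\tau(z)\rangle$; therefore $d(\mathbf x,\mathbf y)=\|\mathbf x-z\|$ if and only if $\langle\mathbf x,\tau(z)\rangle\le\langle\mathbf x,z\rangle$ for every $\tau\in G$, i.e. $z$ maximizes $\tau\mapsto\langle\mathbf x,\tau(z)\rangle$ over the finite group $G$. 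It then remains to show that this maximality is equivalent to $\mathbf x$ and $z$ lying in a common (closed) Weyl chamber.

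For the implication ``maximality $\Rightarrow$ common chamber'' I would first specialize the maximality inequality to $\tau=\sigma_\alpha$; since $\|\alpha\|^{2}=2$ we have $\sigma_\alpha(z)=z-\langle z,\alpha\rangle\alpha$, and the inequality becomes $\langle\mathbf x,\alpha\rangle\langle z,\alpha\rangle\ge0$ for every $\alpha\in R$. To exhibit the chamber I would pick $v\in\mathbb R^{N}$ lying off every hyperplane $\alpha^{\perp}$ and consider $u=\mathbf x+\varepsilon z+\varepsilon^{2}v$ for small $\varepsilon>0$; a short case analysis on the sign of $\langle u,\alpha\rangle$ (determined by $\langle\mathbf x,\alpha\rangle$, failing that by $\langle z,\alpha\rangle$, failing that by $\langle v,\alpha\rangle$) shows, using the inequalities just obtained, that $u$ is regular and that $\langle\mathbf x,\alpha\rangle\langle u,\alpha\rangle\ge0$ and $\langle z,\alpha\rangle\langle u,\alpha\rangle\ge0$ for all $\alpha\in R$. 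Since for a regular $u$ the closed chamber containing $u$ is exactly $\{p\in\mathbb R^{N}:\langle p,\alpha\rangle\langle u,\alpha\rangle\ge0\text{ for all }\alpha\in R\}$ — a fact I would check separately by noting that $p+\delta u$ lies in the open chamber of $u$ for small $\delta>0$ — this closed chamber contains both $\mathbf x$ and $z$.

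For the converse I would choose the positive subsystem $R_{+}$ and simple roots $\alpha_{1},\dots,\alpha_{r}$ so that the common chamber is $\overline C=\{p:\langle p,\alpha_i\rangle\ge0,\ i=1,\dots,r\}$; then $\langle p,\beta\rangle\ge0$ for every $p\in\overline C$ and $\beta\in R_{+}$, because each positive root is a nonnegative combination of simple roots. Given $\tau\in G$, take a reduced expression $\tau=s_{j_1}\cdots s_{j_\ell}$ in the simple reflections $s_i:=\sigma_{\alpha_i}$ and set $\tau_m:=s_{j_1}\cdots s_{j_m}$. The standard length/root-sign dictionary for Coxeter groups gives $\tau_{m-1}(\alpha_{j_m})\in R_{+}$ (the prefix $\tau_m$ is reduced, so $\ell(\tau_{m-1}s_{j_m})=\ell(\tau_{m-1})+1$), and from $s_{j_m}z=z-\langle z,\alpha_{j_m}\rangle\alpha_{j_m}$ one gets $\tau_m(z)=\tau_{m-1}(z)-\langle z,\alpha_{j_m}\rangle\,\tau_{m-1}(\alpha_{j_m})$, hence $\langle\mathbf x,\tau_m(z)\rangle=\langle\mathbf x,\tau_{m-1}(z)\rangle-\langle z,\alpha_{j_m}\rangle\langle\mathbf x,\tau_{m-1}(\alpha_{j_m})\rangle\le\langle\mathbf x,\tau_{m-1}(z)\rangle$, since $\langle z,\alpha_{j_m}\rangle\ge0$ and $\langle\mathbf x,\tau_{m-1}(\alpha_{j_m})\rangle\ge0$. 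Telescoping over $m$ yields $\langle\mathbf x,\tau(z)\rangle\le\langle\mathbf x,z\rangle$ for all $\tau\in G$, i.e. the maximality established above, and we are done by the first paragraph.

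The only genuinely delicate point I expect is the first implication: promoting the pointwise sign conditions $\langle\mathbf x,\alpha\rangle\langle z,\alpha\rangle\ge0$ to the assertion that $\mathbf x$ and $z$ share a closed chamber, that is, the genericity/perturbation step together with the identification of a closed chamber with a region of the form $\{p:\langle p,\alpha\rangle\langle u,\alpha\rangle\ge0\}$ for regular $u$. Everything else is orthogonality bookkeeping and textbook Coxeter combinatorics; alternatively, the lemma may simply be referenced to \cite[Chapter VII, proof of Theorem 2.12]{Helgason}.
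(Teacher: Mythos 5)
Your argument is correct. Note, however, that the paper does not actually prove this lemma: its ``proof'' is a citation to \cite[Chapter VII, proof of Theorem 2.12]{Helgason}, so there is no internal argument to compare against. What you have written is a complete, self-contained proof along the standard lines found in the Coxeter-group literature: the reduction of the distance minimization to maximizing $\tau\mapsto\langle\mathbf x,\tau(z)\rangle$ (valid since $G$ acts by orthogonal maps), the telescoping estimate $\langle\mathbf x,\tau(z)\rangle\le\langle\mathbf x,z\rangle$ for $\mathbf x,z$ in a common closed chamber via a reduced word and the dictionary $\ell(w s_i)>\ell(w)\iff w(\alpha_i)\in R_+$, and, for the converse, the passage from the pointwise sign conditions $\langle\mathbf x,\alpha\rangle\langle z,\alpha\rangle\ge0$ to membership in a common closed chamber. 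The last step is indeed the only delicate one, and your perturbation $u=\mathbf x+\varepsilon z+\varepsilon^2 v$ with $v$ regular, together with the identification of the closed chamber of a regular $u$ with $\{p:\langle p,\alpha\rangle\langle u,\alpha\rangle\ge0\ \forall\alpha\in R\}$, handles the degenerate cases (points on walls) cleanly; the three-way case analysis on which of $\langle\mathbf x,\alpha\rangle$, $\langle z,\alpha\rangle$, $\langle v,\alpha\rangle$ dominates checks out, and finiteness of $R$ lets you choose one $\varepsilon$ for all roots. The only cost of your route relative to the citation is that it imports the standard length/positive-root correspondence for Coxeter groups; what it buys is a proof that is actually verifiable within the paper rather than deferred to Helgason.
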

\begin{proof}
See \cite[Chapter VII, proof of Theorem 2.12]{Helgason}. 
\end{proof}

\begin{lemma}\label{lem:alpha}
  Let $\mathbf{x},\mathbf{y} \in \mathbb{R}^N$ and assume that $n(\mathbf x,\mathbf y)\geq 1$. Then there is $\alpha\in R$ such that 
  \begin{equation}
      \| \mathbf x-\mathbf y\| >\| \mathbf{x}-\sigma_\alpha(\mathbf y)\|.
  \end{equation}
  \end{lemma}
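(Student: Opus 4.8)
The plan is to exploit the geometric characterization of $d(\mathbf x,\mathbf y)$ recalled just above: $d(\mathbf x,\mathbf y)=\|\mathbf x-\sigma(\mathbf y)\|$ precisely when $\mathbf x$ and $\sigma(\mathbf y)$ lie in a common Weyl chamber. Since $n(\mathbf x,\mathbf y)\geq 1$, the point $\mathbf y$ does \emph{not} lie in the closed Weyl chamber of $\mathbf x$; equivalently $\|\mathbf x-\mathbf y\|>d(\mathbf x,\mathbf y)$, so there is some $\sigma_0\in G$ with $\|\mathbf x-\sigma_0(\mathbf y)\|<\|\mathbf x-\mathbf y\|$. The issue is that $\sigma_0$ is a product of several reflections, and we must produce a \emph{single} reflection $\sigma_\alpha$ that already strictly decreases the distance. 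First I would fix a chamber $C$ containing $\mathbf x$, described by inequalities $\langle \cdot,\alpha\rangle\geq 0$ for $\alpha$ ranging over a suitable subset of $R$ (the walls of $C$). Because $\mathbf y\notin C$, there must be a wall $\alpha$ of $C$ with $\langle \mathbf y,\alpha\rangle<0$, while $\langle \mathbf x,\alpha\rangle\geq 0$ (indeed $>0$ if $\mathbf x$ is in the interior; the boundary case needs a separate remark, see below).

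Next I would compute directly. Using $\|\alpha\|^2=2$, the reflection is $\sigma_\alpha(\mathbf y)=\mathbf y-\langle \mathbf y,\alpha\rangle\,\alpha$, hence
\begin{equation*}
\|\mathbf x-\sigma_\alpha(\mathbf y)\|^2=\|\mathbf x-\mathbf y\|^2+2\langle \mathbf y,\alpha\rangle\langle \mathbf x-\mathbf y,\alpha\rangle+\langle \mathbf y,\alpha\rangle^2\|\alpha\|^2.
\end{equation*}
Since $\|\alpha\|^2\langle \mathbf y,\alpha\rangle^2=2\langle \mathbf y,\alpha\rangle^2$ and $2\langle \mathbf y,\alpha\rangle\langle \mathbf x-\mathbf y,\alpha\rangle=2\langle \mathbf y,\alpha\rangle\langle \mathbf x,\alpha\rangle-2\langle \mathbf y,\alpha\rangle^2$, the right-hand side collapses to
\begin{equation*}
\|\mathbf x-\sigma_\alpha(\mathbf y)\|^2=\|\mathbf x-\mathbf y\|^2+2\langle \mathbf x,\alpha\rangle\langle \mathbf y,\alpha\rangle.
\end{equation*}
With $\langle \mathbf x,\alpha\rangle\geq 0$ and $\langle \mathbf y,\alpha\rangle<0$ this gives $\|\mathbf x-\sigma_\alpha(\mathbf y)\|^2\leq \|\mathbf x-\mathbf y\|^2$, and strict inequality as soon as $\langle \mathbf x,\alpha\rangle>0$.

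The only remaining obstacle is the degenerate case $\langle \mathbf x,\alpha\rangle=0$ for \emph{every} wall $\alpha$ of $C$ separating $\mathbf x$ from $\mathbf y$, which forces $\mathbf x$ onto the boundary of $C$. To handle this I would note that $\mathbf x$ then lies in several chambers simultaneously and I am free to replace $C$ by another chamber $C'$ containing $\mathbf x$: concretely, reflecting $C$ across one such wall $\alpha$ with $\langle \mathbf x,\alpha\rangle=0$ produces a chamber $C'=\sigma_\alpha(C)$ still containing $\mathbf x$, whose walls are the $\sigma_\alpha$-images of the walls of $C$. Iterating, and using that $\mathbf y\notin C$ while some $G$-translate of $\mathbf y$ lies in the chamber of $\mathbf x$, one reaches a chamber $C''\ni \mathbf x$ and a wall $\beta$ of $C''$ with $\langle \mathbf x,\beta\rangle>0$ and $\langle \mathbf y,\beta\rangle<0$; the computation above then yields $\|\mathbf x-\sigma_\beta(\mathbf y)\|<\|\mathbf x-\mathbf y\|$ as desired. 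Alternatively, and perhaps more cleanly, one can perturb: choose $\mathbf x_\varepsilon$ in the interior of $C$ with $\mathbf x_\varepsilon\to \mathbf x$, apply the interior case to get a reflection $\alpha$ (with finitely many roots, one $\alpha$ works for a sequence $\varepsilon\to 0$) satisfying $\|\mathbf x_\varepsilon-\sigma_\alpha(\mathbf y)\|<\|\mathbf x_\varepsilon-\mathbf y\|$, but one must still check the limiting inequality stays strict — which again reduces to the identity $\|\mathbf x-\sigma_\alpha(\mathbf y)\|^2-\|\mathbf x-\mathbf y\|^2=2\langle\mathbf x,\alpha\rangle\langle\mathbf y,\alpha\rangle$ together with $\langle \mathbf y,\alpha\rangle<0$ and the observation that $\langle \mathbf x,\alpha\rangle=0$ would contradict $n(\mathbf x,\mathbf y)\geq 1$. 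I expect the boundary bookkeeping to be the only genuinely delicate point; the core of the argument is the one-line distance identity above.
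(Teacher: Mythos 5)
Your key identity $\|\mathbf x-\sigma_\alpha(\mathbf y)\|^2-\|\mathbf x-\mathbf y\|^2=2\langle\mathbf x,\alpha\rangle\langle\mathbf y,\alpha\rangle$ is exactly the mechanism behind the paper's proof (it is the content of the Helgason reference the authors cite): $\sigma_\alpha$ strictly shortens the distance if and only if $\alpha^\perp$ \emph{strictly} separates $\mathbf x$ from $\mathbf y$. Your interior case is fine, but the boundary case --- which you rightly flag as the delicate point --- is where the proposal has a genuine gap, and your proposed patches do not close it. Concretely, take the dihedral group of the square ($m=4$ in the notation of Section 4), $\mathbf x=(1,0)$, $\mathbf y=(0,-1)$. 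Here $\mathbf y=\sigma(\mathbf x)$ for a rotation $\sigma\in G$, so $d(\mathbf x,\mathbf y)=0$ and $n(\mathbf x,\mathbf y)=1$. If you fix the chamber $C=\{(r\cos\theta,r\sin\theta):r\geq 0,\ 0\leq\theta\leq\pi/4\}$ containing $\mathbf x$, its walls are the roots proportional to $e_2$ and to $(1,-1)$; the only wall inequality violated by $\mathbf y$ is the one for $e_2$, and $\langle\mathbf x,e_2\rangle=0$, so your recipe outputs a reflection with $\|\mathbf x-\sigma_{\alpha}(\mathbf y)\|=\|\mathbf x-\mathbf y\|$ and no strict decrease. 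This same example shows that your claimed justification ``$\langle\mathbf x,\alpha\rangle=0$ would contradict $n(\mathbf x,\mathbf y)\geq 1$'' is false for an individual root. The chamber-switching iteration does succeed here (the sector $\{-\pi/4\leq\theta\leq 0\}$ has the root proportional to $(1,1)$ as a wall, which works), but your sketch asserts rather than proves that such a chamber and wall always exist.

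The paper closes the argument by contraposition, and that is also the cleanest way to repair your proof: suppose $\|\mathbf x-\mathbf y\|\leq\|\mathbf x-\sigma_\alpha(\mathbf y)\|$ for \emph{every} $\alpha\in R$. By your identity this means $\langle\mathbf x,\alpha\rangle\langle\mathbf y,\alpha\rangle\geq 0$ for every root, i.e.\ $\mathbf x$ and $\mathbf y$ lie on the same closed side of every hyperplane $\alpha^\perp$; then any closed Weyl chamber containing $\mathbf x+\varepsilon\mathbf y$ for small $\varepsilon>0$ contains both $\mathbf x$ and $\mathbf y$, so $n(\mathbf x,\mathbf y)=0$. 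This one step replaces the entire wall/chamber case analysis and avoids the degenerate configurations altogether.
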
  
  \begin{proof}
If $ \| \mathbf x-\mathbf y\| \leq  \| \mathbf{x}-\sigma_\alpha(\mathbf y)\|$ for every $\alpha\in R$, then $\mathbf x$ and $\mathbf y$ are situated in the same (closed)  side of the hyperplane  $\alpha^{\perp}$ (\cite[Chapter VII, proof of Theorem 2.12]{Helgason}). Thus $\mathbf x$ and $\mathbf y$ belong to the same Weyl chamber, hence $n(\mathbf x,\mathbf y)=0$. 
  \end{proof}
  
  \begin{corollary}\label{coro:ciag_skracajacy}
  For any $\mathbf{x},\mathbf{y} \in \mathbb{R}^N$ such that $n(\mathbf{x},\mathbf{y})>0$ there are: $1 \leq m \leq |G|$ and $\boldsymbol{\alpha}=(\alpha_1,\alpha_2,\ldots,\alpha_m)$ such that
  \begin{equation}\label{eq:ciag_skracajacy}
      \|\mathbf{x}-\mathbf{y}\|>\|\mathbf{x}-\sigma_{\alpha_1}(\mathbf{y})\|>\|\mathbf{x}-\sigma_{\alpha_{2}} \circ \sigma_{\alpha_{1}}(\mathbf{y})\|>\ldots>\|\mathbf{x}-\sigma_{\alpha_{m}} \circ \sigma_{\alpha_{m-1}} \circ \ldots \circ \sigma_{\alpha_{1}}(\mathbf{y})\|
  \end{equation}
  and
  \begin{equation}\label{eq:skracajacy_koniec}
      n(\mathbf{x},\sigma_{\boldsymbol{\alpha}} (\mathbf{y}))=0.
  \end{equation}
  \end{corollary}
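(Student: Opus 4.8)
The plan is to build the sequence $\boldsymbol\alpha$ greedily, one reflection at a time, using Lemma \ref{lem:alpha} as the engine, and then to argue that the process must terminate in at most $|G|$ steps. First I would set $\mathbf y_0 = \mathbf y$. Given $\mathbf y_j = \sigma_{\alpha_j}\circ\cdots\circ\sigma_{\alpha_1}(\mathbf y)$ with $n(\mathbf x, \mathbf y_j) > 0$, Lemma \ref{lem:alpha} (applied to the pair $(\mathbf x, \mathbf y_j)$) yields a root $\alpha_{j+1}\in R$ with $\|\mathbf x - \sigma_{\alpha_{j+1}}(\mathbf y_j)\| < \|\mathbf x - \mathbf y_j\|$; set $\mathbf y_{j+1} = \sigma_{\alpha_{j+1}}(\mathbf y_j)$. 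If $n(\mathbf x, \mathbf y_{j+1}) = 0$ we stop; otherwise we repeat. By construction the resulting chain satisfies the strict decrease \eqref{eq:ciag_skracajacy}, and the stopping condition is exactly \eqref{eq:skracajacy_koniec}. Since $n(\mathbf x, \mathbf y) > 0$ by hypothesis, we perform at least one step, so $m \geq 1$.

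The point that needs a genuine argument is termination, and in particular the bound $m \leq |G|$. Here I would observe that every $\mathbf y_j$ lies in the orbit $\mathcal O(\mathbf y)$, which has at most $|G|$ elements, and that the map $j \mapsto \|\mathbf x - \mathbf y_j\|$ is strictly decreasing along the chain. Consequently the points $\mathbf y_0, \mathbf y_1, \ldots, \mathbf y_m$ are pairwise distinct elements of $\mathcal O(\mathbf y)$, which forces $m + 1 \leq |\mathcal O(\mathbf y)| \leq |G|$, hence $m \leq |G|$ (in fact $m \leq |G| - 1$, but the stated bound is all we need). In particular the greedy process cannot run forever: it must reach a stage with $n(\mathbf x, \mathbf y_m) = 0$ within $|G|$ steps, because otherwise Lemma \ref{lem:alpha} would keep producing new, distinct orbit points beyond the cardinality of the orbit, a contradiction.

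The main obstacle — really the only subtlety — is making sure the strict inequalities genuinely propagate and that one does not accidentally revisit an orbit point; but this is automatic precisely because strict decrease of $\|\mathbf x - \mathbf y_j\|$ rules out repetition. A secondary point is that the corollary asks for $\boldsymbol\alpha$ to consist of \emph{positive} roots (as in the definition of admissibility preceding Theorem \ref{teo:1}); since $\sigma_\alpha = \sigma_{-\alpha}$, one may freely replace each $\alpha_j$ produced by Lemma \ref{lem:alpha} by its representative in $R_+$ without changing any of the reflections $\sigma_{\alpha_j}$ or any of the norms appearing in \eqref{eq:ciag_skracajacy}, so this costs nothing. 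Assembling these observations gives the corollary.
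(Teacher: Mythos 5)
Your proof is correct and follows exactly the route the paper intends: the corollary is stated as an immediate consequence of Lemma~\ref{lem:alpha}, obtained by greedily iterating that lemma and bounding the number of steps by the cardinality of the orbit $\mathcal{O}(\mathbf{y})\subseteq\{\sigma(\mathbf{y}):\sigma\in G\}$, since the strictly decreasing distances force the iterates to be pairwise distinct. Your additional remarks on termination and on replacing each root by its positive representative are accurate and complete the argument.
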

}
\section{Auxiliary estimates for the heat kernel}\label{sec:auxiliary} 

In the present section we establish auxiliary estimates for the heat kernel which will be used for proving Theorems \ref{teo:1} and \ref{th:dihedral}. Our starting point is the following proposition which is an improvement of the estimates \eqref{eq:heat_JFAA}.

 \begin{proposition}\label{propo:heat}
For any constants ${c_{\rm lower}}>1/4$ and $  0<{c_{\rm upper}}<1/4$ there are  constants ${C_{\rm lower}},{C_{\rm upper}}$, such that for all $\mathbf{x},\mathbf{y} \in \mathbb{R}^N$ and $t>0$ we have

\begin{equation}\label{upl} {C_{\rm lower}}w(B(\mathbf{x},\sqrt{t}))^{-1} e^{-{c_{\rm lower}}\|\mathbf{x}-\mathbf{y}\|^2/t}\leq h_t(\mathbf{x},\mathbf{y}) \leq {C_{\rm upper}} w(B(\mathbf{x},\sqrt{t}))^{-1}  e^{-{c_{\rm upper}}d(\mathbf{x},\mathbf{y})^2/t}.
\end{equation}
\end{proposition}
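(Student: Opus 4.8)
The plan is to derive both inequalities in \eqref{upl} from the Rösler integral representation \eqref{heat:Rosler}, sharpening the constants in the known bounds \eqref{eq:heat_JFAA}. Throughout we use the comparison \eqref{eq:balls_asymp} and the doubling property \eqref{eq:doubling} of $dw$ to pass freely between $w(B(\mathbf{x},\sqrt{t}))$ and $w(B(\mathbf{x},\sqrt{ct}))$ for any fixed $c>0$ at the cost of a constant, and similarly between $w(B(\mathbf{x},\sqrt{t}))$ and $w(B(\mathbf{y},\sqrt{t}))$ using the symmetry $h_t(\mathbf{x},\mathbf{y})=h_t(\mathbf{y},\mathbf{x})$ when convenient.

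\textbf{Upper bound.} Starting from \eqref{heat:Rosler}, I would split the exponent. Fix $0<c_{\rm upper}<1/4$ and pick $\varepsilon>0$ small so that $c_{\rm upper}<\tfrac14-\varepsilon$, actually more simply write $\tfrac{1}{4t}=\tfrac{\theta}{4t}+\tfrac{1-\theta}{4t}$ with $\theta\in(0,1)$ chosen so that $\tfrac{1-\theta}{4}\ge c_{\rm upper}$. Using \eqref{eq:d_A}, for every $\eta\in\operatorname{conv}\mathcal O(\mathbf{x})$ we have $\exp(-(1-\theta)A(\mathbf x,\mathbf y,\eta)^2/(4t))\le \exp(-c_{\rm upper} d(\mathbf x,\mathbf y)^2/t)$, so pulling this factor out of the integral gives
\begin{equation*}
h_t(\mathbf x,\mathbf y)\le \boldsymbol c_k^{-1}2^{-\mathbf N/2}t^{-\mathbf N/2}e^{-c_{\rm upper} d(\mathbf x,\mathbf y)^2/t}\int_{\mathbb R^N}\exp\bigl(-\theta A(\mathbf x,\mathbf y,\eta)^2/(4t)\bigr)\,d\mu_{\mathbf x}(\eta).
\end{equation*}
The remaining integral is, up to the constant $\theta^{-\mathbf N/2}$, exactly $\boldsymbol c_k 2^{\mathbf N/2}(\theta t)^{\mathbf N/2}$ times the heat kernel $h_{t/\theta}(\mathbf x,\mathbf x)$ evaluated on the diagonal (by \eqref{heat:Rosler} with $t$ replaced by $t/\theta$ and $\mathbf y=\mathbf x$, noting $A(\mathbf x,\mathbf x,\eta)^2$ is what appears). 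Hence it suffices to know the on-diagonal bound $h_s(\mathbf x,\mathbf x)\lesssim w(B(\mathbf x,\sqrt s))^{-1}$, which follows from \eqref{eq:heat_JFAA} (the upper bound there with $\mathbf y=\mathbf x$, $d(\mathbf x,\mathbf x)=0$), combined with doubling to replace $\sqrt{t/\theta}$ by $\sqrt t$. This yields the claimed upper estimate.

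\textbf{Lower bound.} Here I would again use \eqref{heat:Rosler}, but now bound $A(\mathbf x,\mathbf y,\eta)$ from above: since $\mu_{\mathbf x}$ is supported in $\operatorname{conv}\mathcal O(\mathbf x)$, for $\eta$ in that set one has $\|\eta\|\le\|\mathbf x\|$ and, writing $A^2=\|\mathbf x\|^2-\|\eta\|^2+\|\mathbf y-\eta\|^2\le \|\mathbf y-\eta\|^2\le(\|\mathbf x-\mathbf y\|+\|\mathbf x-\eta\|)^2$... more robustly, $A(\mathbf x,\mathbf y,\eta)\le A(\mathbf x,\mathbf x,\eta)+\|\mathbf x-\mathbf y\|$ by the triangle inequality in the metric structure, giving $A^2\le (1+\delta^{-1})A(\mathbf x,\mathbf x,\eta)^2+(1+\delta)\|\mathbf x-\mathbf y\|^2$ for any $\delta>0$. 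Fix $c_{\rm lower}>1/4$ and choose $\delta>0$ so that $\tfrac{1+\delta}{4}<c_{\rm lower}$, then restrict the integral to the set $S=\{\eta:A(\mathbf x,\mathbf x,\eta)\le \sqrt t\}$, on which $A(\mathbf x,\mathbf y,\eta)^2\le (1+\delta^{-1})t+(1+\delta)\|\mathbf x-\mathbf y\|^2$, so
\begin{equation*}
h_t(\mathbf x,\mathbf y)\ge \boldsymbol c_k^{-1}2^{-\mathbf N/2}t^{-\mathbf N/2}e^{-(1+\delta^{-1})/4}\,e^{-c_{\rm lower}\|\mathbf x-\mathbf y\|^2/t}\,\mu_{\mathbf x}(S).
\end{equation*}
It remains to bound $\mu_{\mathbf x}(S)$ from below by a constant; equivalently $\mu_{\mathbf x}(\{A(\mathbf x,\mathbf x,\eta)>\sqrt t\})$ must be bounded away from $1$. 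This is where I expect the main obstacle. One route: integrate $\exp(-A(\mathbf x,\mathbf x,\eta)^2/(4s))$ against $\mu_{\mathbf x}$ to recover $h_s(\mathbf x,\mathbf x)$, use the two-sided on-diagonal bound $h_s(\mathbf x,\mathbf x)\asymp w(B(\mathbf x,\sqrt s))^{-1}$ from \eqref{eq:heat_JFAA} for two comparable values of $s$ (say $s=t$ and $s=2t$), and a Chebyshev/layer-cake argument: if $\mu_{\mathbf x}(S)$ were too small, the diagonal kernel at scale $t$ would be too small compared to scale $2t$, contradicting doubling. Making this quantitative — extracting a uniform constant independent of $\mathbf x,t$ — is the delicate point, and I would phrase it as: $h_{t}(\mathbf x,\mathbf x)=\int e^{-A(\mathbf x,\mathbf x,\eta)^2/(4t)}d\mu_{\mathbf x}\le \mu_{\mathbf x}(S)+e^{-1/4}$ (crudely) in normalized units, together with $h_t(\mathbf x,\mathbf x)\gtrsim h_{2t}(\mathbf x,\mathbf x)\cdot(\text{doubling const})$ rescaled appropriately, forcing $\mu_{\mathbf x}(S)\gtrsim 1$. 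Finally, combining the displayed lower bound with \eqref{eq:balls_asymp} to identify $t^{-\mathbf N/2}\asymp w(B(\mathbf x,\sqrt t))^{-1}$ up to the harmless factor $\prod_\alpha(|\langle\mathbf x,\alpha\rangle|+\sqrt t)^{k(\alpha)}$ — which here must be absorbed, so in fact the cleanest statement uses the representation \eqref{eq:heat_formula} and the classical asymptotics of $E$; I would instead simply invoke the already-established lower bound in \eqref{eq:heat_JFAA} with a worse constant $c^{-1}$ and note that the whole point of Proposition~\ref{propo:heat} over \eqref{eq:heat_JFAA} is only to make $c_{\rm lower}$ arbitrarily close to $1/4$ from above and $c_{\rm upper}$ arbitrarily close to $1/4$ from below, so the argument is really a "constant-improvement" bootstrap of \eqref{eq:heat_JFAA} via \eqref{heat:Rosler}, not a from-scratch proof.
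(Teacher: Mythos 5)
Your upper bound argument is essentially the paper's: split $\tfrac{1}{4t}A^2$ into two pieces, use \eqref{eq:d_A} to extract $e^{-c_{\rm upper}d(\mathbf x,\mathbf y)^2/t}$, and recognize the remaining integral as a dilated heat kernel controlled by \eqref{eq:heat_JFAA} plus doubling. One slip: the leftover integral $\int\exp(-\theta A(\mathbf x,\mathbf y,\eta)^2/(4t))\,d\mu_{\mathbf x}(\eta)$ reproduces the \emph{off-diagonal} kernel $h_{t/\theta}(\mathbf x,\mathbf y)$, not $h_{t/\theta}(\mathbf x,\mathbf x)$; this is harmless since the upper bound in \eqref{eq:heat_JFAA} applies off-diagonal after dropping the Gaussian factor, but as written the identification is wrong.

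The lower bound is where there is a genuine gap. Your plan hinges on showing $\mu_{\mathbf x}(S)\gtrsim 1$ for $S=\{\eta: A(\mathbf x,\mathbf x,\eta)^2\le t\}$, and this is simply false: since $A(\mathbf x,\mathbf x,\eta)^2=2(\|\mathbf x\|^2-\langle\mathbf x,\eta\rangle)$, the set $S$ is exactly $U(\mathbf x,t/2)$ in the notation of \eqref{eq:U}, and by Theorem~\ref{teo:rejeb} (equivalently \cite[Theorem 2.1]{JiuLi}) one has $\mu_{\mathbf x}(U(\mathbf x,t))\asymp t^{\mathbf N/2}w(B(\mathbf x,\sqrt t))^{-1}$, which tends to $0$ as $t\to 0$ whenever $\mathbf x$ is off the reflecting hyperplanes (already visible in rank one, where $\mu_x$ has a density vanishing to finite order and assigns mass $O(t)$ to $[x-ct,x]$). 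Your Chebyshev rescue cannot work either: the on-diagonal identity only yields $\mu_{\mathbf x}(S)\ge c\,t^{\mathbf N/2}w(B(\mathbf x,\sqrt t))^{-1}-e^{-1/4}$, whose right-hand side is typically negative. The correct target is precisely $\mu_{\mathbf x}(S)\gtrsim t^{\mathbf N/2}w(B(\mathbf x,\sqrt t))^{-1}$ (which, inserted into your restricted integral, does give the claim, since $t^{-\mathbf N/2}\mu_{\mathbf x}(S)\gtrsim w(B(\mathbf x,\sqrt t))^{-1}$); but that estimate is itself the nontrivial content of \cite[Theorem 2.1]{JiuLi}, and in this paper it is \emph{deduced from} Theorem~\ref{teo:1}, so proving it here would require an independent argument to avoid circularity. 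Your final fallback — invoking \eqref{eq:heat_JFAA} — does not close the gap, because \eqref{eq:heat_JFAA} only provides the lower bound with some unspecified constant $c^{-1}$ in the exponent, whereas the whole point of the proposition is to achieve every $c_{\rm lower}>1/4$. The paper does not prove this direction at all: it cites \cite[Corollary 2.3]{JiuLi} verbatim for the lower bound with any $c_{\rm lower}>1/4$.
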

 \begin{proof}
To see the upper bound with any constant $0<c_{\rm upper}<1/4$ which is close to $1/4$, we apply \eqref{heat:Rosler} together with \eqref{eq:d_A} and get 
 \begin{equation*}
     \begin{split}
         h_t(\mathbf x,\mathbf y)&\leq {\boldsymbol c}_k^{-1}  (2t)^{-\mathbf{N}/2}\int_{\mathbb R^N}  \exp\Big(-\frac{(1-4c_{\rm upper})A(\mathbf x, \mathbf y, \eta)^2}{4t}\Big) \exp(-c_{\rm upper}d(\mathbf x,\mathbf y)^2/t)\,d\mu_{\mathbf{x}}(\eta)\\
         &=(1-4c_{\rm upper})^{-\mathbf N/2}h_{t/(1-4c_{\rm upper})}(\mathbf x,\mathbf y)\exp(-c_{\rm upper} d(\mathbf x,\mathbf y)^2/t)\\
         &\leq C_{\rm upper} w(B(\mathbf x,\sqrt{t}))^{-1} \exp(-c_{\rm upper} d(\mathbf x,\mathbf y)^2/t),
     \end{split}
 \end{equation*}
 where in the last inequality we have used the second inequality in \eqref{eq:heat_JFAA} and the doubling property \eqref{eq:doubling}. 
 
 The lower bound in \eqref{upl} with any constant $c_{\rm lower}>1/4$ is Corollary 2.3 of Jiu and Li \cite{JiuLi}. 
 \end{proof}
  We now turn to deriving estimates for the heat kernel which will be used for an iteration procedure.

\begin{proposition}\label{propo:basic}
Let ${c_{\rm lower}},{c_{\rm upper}}$ be the constants from Proposition~\ref{propo:heat} and let $c_1<c_{\rm upper}$. There is $C_1 \geq 1$ such that for all $\mathbf{x},\mathbf{y} \in \mathbb{R}^N$ and $t>0$ we have
\begin{equation}\label{eq:lower_1}
    C_1^{-1}\Bigg( w(B(\mathbf{x},\sqrt{t}))^{-1}e^{-{c_{\rm lower}}\frac{\|\mathbf{x}-\mathbf{y}\|^2}{t}}+\left(1+\frac{\|\mathbf{x}-\mathbf{y}\|}{\sqrt{t}}\right)^{-2}\sum_{\alpha \in R}h_{t}(\mathbf{x},\sigma_{\alpha}(\mathbf{y}))\Bigg) \leq h_t(\mathbf{x},\mathbf{y}),
\end{equation}
\begin{equation}\label{eq:upper_1}
    h_t(\mathbf{x},\mathbf{y}) \leq C_1\Bigg(w(B(\mathbf{x},\sqrt{t}))^{-1}e^{-c_{1}\frac{\|\mathbf{x}-\mathbf{y}\|^2}{t}}+\left(1+\frac{\|\mathbf{x}-\mathbf{y}\|}{\sqrt{t}}\right)^{-2}\sum_{\alpha \in R}h_{t}(\mathbf{x},\sigma_{\alpha}(\mathbf{y}))\Bigg).
\end{equation}
\end{proposition}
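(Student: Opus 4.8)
The plan is to exploit R\"osler's integral representation \eqref{heat:Rosler} together with the key algebraic identity $A(\mathbf x,\mathbf y,\eta)=A(\mathbf x,\sigma(\mathbf y),\sigma(\eta))$ for $\sigma\in G$ (which holds because $A$ depends on $\|\mathbf x\|^2+\|\mathbf y\|^2-2\langle\mathbf y,\eta\rangle$ and $G$ preserves the scalar product, and because $\mu_{\mathbf x}$ is $G$-covariant in the sense that pushing $\mu_{\mathbf x}$ forward by $\sigma$ gives $\mu_{\mathbf x}$ again up to the $\sigma$-action on the orbit). Thus $\sum_{\alpha\in R}h_t(\mathbf x,\sigma_\alpha(\mathbf y))$ is, up to the constant $c_k^{-1}2^{-\mathbf N/2}t^{-\mathbf N/2}$, a sum of integrals $\int \exp(-A(\mathbf x,\mathbf y,\eta)^2/4t)$ over the reflected measures $d\mu_{\sigma_\alpha(\mathbf x)}$. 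The whole argument then reduces to comparing the Gaussian factor $\exp(-A^2/4t)$ at the ``central'' point $\eta$ relevant to $h_t(\mathbf x,\mathbf y)$ against the same factor integrated over the other orbit points, all of which is governed by the elementary fact that $A(\mathbf x,\mathbf y,\eta)\geq d(\mathbf x,\mathbf y)$ and that $\|\mathbf x-\mathbf y\|$ and the various $A$-values are mutually comparable via the factor $(1+\|\mathbf x-\mathbf y\|/\sqrt t)^{-2}$.

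First I would prove the \emph{lower bound} \eqref{eq:lower_1}. The first summand $w(B(\mathbf x,\sqrt t))^{-1}e^{-c_{\rm lower}\|\mathbf x-\mathbf y\|^2/t}$ is immediately dominated by $h_t(\mathbf x,\mathbf y)$ by the lower estimate in Proposition \ref{propo:heat}. For the second summand, fix $\alpha\in R$; using the $G$-covariance of R\"osler's formula I would write $h_t(\mathbf x,\sigma_\alpha(\mathbf y))$ as an integral of $\exp(-A(\mathbf x,\mathbf y,\eta)^2/4t)$ against a probability measure supported in $\operatorname{conv}\mathcal O(\mathbf x)$, and then bound $A(\mathbf x,\mathbf y,\eta)^2\leq \|\mathbf x-\mathbf y\|^2 + (\text{something controlled})$; more efficiently, I would compare directly to $h_{2t}(\mathbf x,\mathbf y)$ or $h_t(\mathbf x,\mathbf y)$ by noting $A(\mathbf x,\mathbf y,\eta)\leq \|\mathbf x\|+\|\mathbf y\|+\|\eta\|\lesssim \|\mathbf x\|+\|\mathbf y\|$, hence $e^{-A^2/4t}$ is comparable to a fixed negative power when $\|\mathbf x-\mathbf y\|\lesssim\sqrt t$, while for $\|\mathbf x-\mathbf y\|\gtrsim\sqrt t$ the prefactor $(1+\|\mathbf x-\mathbf y\|/\sqrt t)^{-2}$ is what makes the sum small enough. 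Concretely: one shows $\sum_\alpha h_t(\mathbf x,\sigma_\alpha(\mathbf y)) \le |R|\, c_k^{-1}2^{-\mathbf N/2}t^{-\mathbf N/2}\sup_{\eta\in\operatorname{conv}\mathcal O(\mathbf x)} e^{-A(\mathbf x,\mathbf y,\eta)^2/4t}$ and then compares the right-hand side to $h_t(\mathbf x,\mathbf y)$ times a harmless rational factor, using \eqref{eq:heat_formula}, \eqref{eq:balls_asymp}, and the fact that $A(\mathbf x,\mathbf y,\eta)^2 \ge \|\mathbf x-\mathbf y\|^2 - 2\,\mathrm{diam}(\mathcal O(\mathbf x))\|\mathbf y\|$ type estimates degrade by at most a multiplicative constant once divided by $(1+\|\mathbf x-\mathbf y\|/\sqrt t)^2$.

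For the \emph{upper bound} \eqref{eq:upper_1}, the idea is the reverse: split according to whether the mass of $\mu_{\mathbf x}$ sits near $\mathbf y$ or away from it. Write $h_t(\mathbf x,\mathbf y)=c_k^{-1}2^{-\mathbf N/2}t^{-\mathbf N/2}\int \exp(-A(\mathbf x,\mathbf y,\eta)^2/4t)\,d\mu_{\mathbf x}(\eta)$ and partition $\operatorname{conv}\mathcal O(\mathbf x)$ (or rather split the estimate) into the region where $A(\mathbf x,\mathbf y,\eta)\geq \frac12\|\mathbf x-\mathbf y\|$ and its complement. On the first region $\exp(-A^2/4t)\le \exp(-\|\mathbf x-\mathbf y\|^2/16t)$, and after reinserting the measure this contributes at most $C\, h_{Ct}(\mathbf x,\mathbf y)\exp(-c'\|\mathbf x-\mathbf y\|^2/t) \le C w(B(\mathbf x,\sqrt t))^{-1} e^{-c_1\|\mathbf x-\mathbf y\|^2/t}$ via Proposition \ref{propo:heat}; the interplay $c_1<c_{\rm upper}$ is exactly the room needed here to absorb the loss from $h_{Ct}$ versus $h_t$. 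On the second region, $\eta$ is geometrically close to $\mathbf y$, which forces $\eta$ to lie near some reflection $\sigma_\alpha(\mathbf x)$ (since $\eta\in\operatorname{conv}\mathcal O(\mathbf x)$ and $\mathbf y$ is ``on the far side'' whenever $\|\mathbf x-\mathbf y\|$ is large); comparing $A(\mathbf x,\mathbf y,\eta)$ with $A(\mathbf x,\sigma_\alpha(\mathbf y),\eta') = A(\sigma_\alpha(\mathbf x),\mathbf y,\sigma_\alpha(\eta'))$ and using that the spherical-mass estimate \eqref{eq:balls_asymp} lets one trade $w(B(\mathbf x,\sqrt t))^{-1}$ for $w(B(\sigma_\alpha(\mathbf x),\sqrt t))^{-1}$, this region is bounded by $C(1+\|\mathbf x-\mathbf y\|/\sqrt t)^{-2}\sum_\alpha h_t(\mathbf x,\sigma_\alpha(\mathbf y))$; the power $-2$ appears because $\|\mathbf y-\sigma_\alpha(\mathbf y)\|$, which measures how far $\mathbf y$ is from the reflecting hyperplane, is $\gtrsim \|\mathbf x-\mathbf y\|$ in this region up to a constant, so comparing $A$-values produces a factor $(\|\mathbf x-\mathbf y\|^2/t)^{-1}\asymp(1+\|\mathbf x-\mathbf y\|/\sqrt t)^{-2}$.

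The main obstacle is the second-region estimate in the upper bound: one must show that, on the part of $\operatorname{conv}\mathcal O(\mathbf x)$ where $A(\mathbf x,\mathbf y,\eta)$ is much smaller than $\|\mathbf x-\mathbf y\|$, the point $\eta$ is controlled by one of the reflected orbit points, and that the resulting comparison of heat kernels loses only the rational factor $(1+\|\mathbf x-\mathbf y\|/\sqrt t)^{-2}$ and a volume factor that the doubling property \eqref{eq:doubling} and \eqref{eq:balls_asymp} can handle. This is the geometric heart of the iteration: it is the statement that ``getting from $\mathbf y$ to the chamber of $\mathbf x$ cheaply requires passing through a reflection,'' quantified via R\"osler's measure. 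Everything else — the lower bound, and the first-region part of the upper bound — is a routine consequence of Proposition \ref{propo:heat}, the explicit formula \eqref{eq:heat_formula}–\eqref{heat:Rosler}, and the doubling and ball-asymptotics estimates.
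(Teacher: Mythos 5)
Your plan misses the mechanism that actually produces the factor $\bigl(1+\frac{\|\mathbf{x}-\mathbf{y}\|}{\sqrt{t}}\bigr)^{-2}$, and the concrete substitutes you propose do not work. The paper's proof is not a geometric comparison of $A$-values under R\"osler's formula: it computes $\partial_t h_t(\mathbf{x},\mathbf{y})$ in two ways --- once from the known recursion of \cite{DzH1},
\begin{equation*}
\partial_t h_t(\mathbf{x},\mathbf{y})=\frac{\|\mathbf{x}-\mathbf{y}\|^2}{4t^2}h_t(\mathbf{x},\mathbf{y})-\frac{N}{2t}h_t(\mathbf{x},\mathbf{y})-\frac{1}{2t}\sum_{\alpha\in R}k(\alpha)\,h_t(\mathbf{x},\sigma_\alpha(\mathbf{y})),
\end{equation*}
which is where the reflected points enter (it encodes the difference part of $\Delta_k$), and once by differentiating \eqref{heat:Rosler} under the integral sign --- and equates the two to obtain the exact identity
\begin{equation*}
\Big(2\mathbf{N}-2N+\tfrac{\|\mathbf{x}-\mathbf{y}\|^2}{t}\Big)h_t(\mathbf{x},\mathbf{y})=4t\,I_2(t,\mathbf{x},\mathbf{y})+2\sum_{\alpha\in R}k(\alpha)\,h_t(\mathbf{x},\sigma_\alpha(\mathbf{y})),\qquad I_2\geq 0 .
\end{equation*}
Dropping $I_2$ and averaging with the lower bound of Proposition~\ref{propo:heat} gives \eqref{eq:lower_1}; splitting the integral defining $I_2$ according to whether $A(\mathbf{x},\mathbf{y},\eta)^2\leq(1-\varepsilon)\|\mathbf{x}-\mathbf{y}\|^2$ and absorbing the near part into the left-hand side gives \eqref{eq:upper_1}. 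The precise power $-2$ is the ratio of the coefficient $\|\mathbf{x}-\mathbf{y}\|^2/t$ to the constant term; it is algebraic, not geometric.

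Concretely, your lower-bound step fails: bounding $\sum_\alpha h_t(\mathbf{x},\sigma_\alpha(\mathbf{y}))$ by $|R|\,{\boldsymbol c}_k^{-1}2^{-\mathbf{N}/2}t^{-\mathbf{N}/2}\sup_\eta e^{-A(\mathbf{x},\mathbf{y},\eta)^2/4t}=|R|\,{\boldsymbol c}_k^{-1}2^{-\mathbf{N}/2}t^{-\mathbf{N}/2}e^{-d(\mathbf{x},\mathbf{y})^2/4t}$ and comparing with the only lower bound available at this stage, $h_t(\mathbf{x},\mathbf{y})\gtrsim w(B(\mathbf{x},\sqrt{t}))^{-1}e^{-c_{\rm lower}\|\mathbf{x}-\mathbf{y}\|^2/t}$, leaves an uncancelled exponential mismatch between $e^{-d(\mathbf{x},\mathbf{y})^2/4t}$ and $e^{-c_{\rm lower}\|\mathbf{x}-\mathbf{y}\|^2/t}$ whenever $d(\mathbf{x},\mathbf{y})\ll\|\mathbf{x}-\mathbf{y}\|$ (e.g.\ $\mathbf{y}$ near $\sigma(\mathbf{x})$ but far from $\mathbf{x}$); no rational factor can repair that. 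The inequality $h_t(\mathbf{x},\mathbf{y})\gtrsim(1+\|\mathbf{x}-\mathbf{y}\|/\sqrt{t})^{-2}h_t(\mathbf{x},\sigma_\alpha(\mathbf{y}))$ is exactly the statement that the Dunkl heat kernel decays only polynomially across reflections, and it cannot be extracted from $A\geq d$ plus Gaussian envelopes. Likewise, in your upper bound the assertion that ``comparing $A$-values produces a factor $(\|\mathbf{x}-\mathbf{y}\|^2/t)^{-1}$'' has no mechanism behind it --- comparing exponents yields exponential, not polynomial, losses --- and the quantitative control of $\mu_{\mathbf{x}}$ near the orbit points that your region-two argument would require is precisely Theorem~\ref{teo:rejeb}, which in this paper is a \emph{consequence} of the bounds you are trying to prove, so the argument would be circular. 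The parts of your plan that do work (the first summand of the lower bound via Proposition~\ref{propo:heat}, and the far region $A\geq\frac12\|\mathbf{x}-\mathbf{y}\|$ in the upper bound, where $c_1<c_{\rm upper}$ provides the needed room) coincide with the paper's treatment, but the heart of the proposition is the identity displayed above, which your proposal lacks.
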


\begin{proof}
 The following formula was proved in~\cite[formula (3.5)]{DzH1}: for all $\mathbf{x},\mathbf{y} \in \mathbb{R}^N$ and $t>0$, we have
\begin{equation}\label{eq:studia}\partial_t h_t(\mathbf{x},\mathbf{y})=\frac{\| \mathbf{x}-\mathbf{y}\|^2}{(2t)^2}h_t(\mathbf{x},\mathbf{y})-\frac{N}{2t}h_t(\mathbf{x},\mathbf{y})-\frac{1}{2t}\sum_{\alpha\in R }k(\alpha)h_t(\mathbf{x},\sigma_\alpha(\mathbf{y})).
\end{equation}
On the other hand, by  \eqref{heat:Rosler}, 
\begin{equation}\label{eq:i1i2}
\begin{split}\partial_th_t(\mathbf{x},\mathbf{y})&=-\frac{\mathbf N}{2}t^{-1}h_t(\mathbf x,\mathbf y)+{\boldsymbol c}_k^{-1}2^{-\mathbf{N}/2}t^{-1}t^{-\mathbf N/2} \int_{\mathbb{R}^N} \frac{A(\mathbf x,\mathbf y,\eta)^2}{4t}e^{-A(\mathbf x,\mathbf y,\eta)^2/{4t}}\, d\mu_{\mathbf x}(\eta)\\
&=:I_1(t,\mathbf x,\mathbf y)+ I_2(t,\mathbf x,\mathbf y).
\end{split}
\end{equation}
Combining ~\eqref{eq:studia} with \eqref{eq:i1i2} we get   
\begin{equation}\label{eq:basic}
    \begin{split}
        \Big(2\mathbf N-2N+\frac{\| \mathbf x-\mathbf y\|^2}{t}\Big)h_t(\mathbf x,\mathbf y)&=4t I_2(t,\mathbf x,\mathbf y)+2\sum_{\alpha\in R} k(\alpha)h_t(\mathbf{x},\sigma_\alpha(\mathbf y)).
    \end{split}
\end{equation}
Note that  $I_2(t,\mathbf{x},\mathbf{y})>0$, and, thanks to our assumption on $k(\alpha)$,  $2\mathbf N-2N+1> 1$, so 
\begin{equation}\label{eq:h_t_k}
    \begin{split}
       h_t(\mathbf x,\mathbf y)
        &\geq 2(2\mathbf N-2N+1)^{-1} \Big(1+\frac{\| \mathbf x-\mathbf y\|^2}{t}\Big)^{-1}\sum_{\alpha\in R} k(\alpha)h_t(\mathbf{x},\sigma_\alpha(\mathbf y)).\\
    \end{split}
\end{equation}
Now, taking the arithmetic mean of the lower bound in \eqref{upl}   with \eqref{eq:h_t_k}   we obtain \eqref{eq:lower_1}, 
since there is a constant $C>0$ such that for all $\mathbf{x},\mathbf{y} \in \mathbb{R}^N$ and $t>0$ we have
 \begin{align*}
     C^{-1}\left(1+\frac{\|\mathbf{x}-\mathbf{y}\|^2}{t}\right)^{-1} \leq \left(1+\frac{\|\mathbf{x}-\mathbf{y}\|}{\sqrt{t}}\right)^{-2} \leq C\left(1+\frac{\|\mathbf{x}-\mathbf{y}\|^2}{t}\right)^{-1}.
 \end{align*}

In order to prove~\eqref{eq:upper_1}, set $\varepsilon=(c_{\rm upper}-c_1)/(2c_{\rm upper})$. Clearly, by the assumption $c_1<c_{\rm upper}$,  we have $\varepsilon>0$. To obtain~\eqref{eq:upper_1}, we split the integral for $tI_2(t,\mathbf{x},\mathbf{y})$ as follows:
\begin{equation}
    \begin{split}
        tI_2(t,\mathbf x,\mathbf y)&={\boldsymbol c}_k^{-1}2^{-\mathbf{N}/2} t^{\mathbf N/2} \int_{A(\mathbf x,\mathbf y,\eta)^2\leq (1-\varepsilon)\| \mathbf x-\mathbf y\|^2} ...+{\boldsymbol c}_k^{-1}2^{-\mathbf{N}/2} t^{\mathbf N/2} \int_{A(\mathbf x,\mathbf y,\eta)^2> (1-\varepsilon)\| \mathbf x-\mathbf y\|^2}...\\
        &=:tI_{2,1}(t,\mathbf x,\mathbf y)+tI_{2,2}(t,\mathbf x,\mathbf y).  
    \end{split}
\end{equation}
Clearly,  
\begin{equation}\label{eq:I21}
    tI_{2,1}(t,\mathbf x,\mathbf y)\leq {\boldsymbol c}_k^{-1}2^{-\mathbf{N}/2}(1-\varepsilon)t^{-\mathbf N/2}\int_{\mathbb{R}^N}\frac{\|\mathbf x-\mathbf y\|^2}{4t}e^{-A(\mathbf x,\mathbf y,\eta)^2/{4t}}\, d\mu_{\mathbf x}(\eta)\leq (1-\varepsilon)  \frac{\| \mathbf x-\mathbf y\|^2}{4t} h_t(\mathbf x,\mathbf y).
\end{equation}
In order to estimate $I_{2,2}(t,\mathbf{x},\mathbf{y})$, note that there are $C_\varepsilon,C_{\varepsilon}',C_{\varepsilon}''>0$ such that 
\begin{equation}\label{eq:I22}
    \begin{split}
        tI_{2,2}(t,\mathbf x,\mathbf y)&\leq C_{\varepsilon}t^{-\mathbf N/2}\int_{A(\mathbf x,\mathbf y,\eta)^2> (1-\varepsilon)\| \mathbf x-\mathbf y\|^2}  e^{-(1-2\varepsilon)A(\mathbf x,\mathbf y,\eta)^2/(4t(1-\varepsilon))}e^{-\varepsilon A(\mathbf x,\mathbf y,\eta)^2/(8t(1-\varepsilon))}\, d\mu_{\mathbf x} (\eta)\\
       & \leq C_\varepsilon e^{-(1-2\varepsilon)\| \mathbf x-\mathbf y\|^2/4t}t^{-\mathbf N/2}\int_{\mathbb R^N} e^{-\varepsilon A(\mathbf x,\mathbf y,\eta)^2/(8t(1-\varepsilon))} \, d\mu_{\mathbf x}(\eta)\\
       &= C_\varepsilon'  e^{-(1-2\varepsilon)\| \mathbf x-\mathbf y\|^2/4t} h_{2(1-\varepsilon)t/\varepsilon}(\mathbf x,\mathbf y)\leq C_\varepsilon''w(B(\mathbf{x},\sqrt{t}))^{-1}  e^{-(1-2\varepsilon)\| \mathbf x-\mathbf y\|^2/4t}.
    \end{split}
\end{equation}
In the last inequality we have used Proposition~\ref{propo:heat}. Combining~\eqref{eq:basic},~\eqref{eq:I21}, and~\eqref{eq:I22} we obtain  
\begin{equation}
    \begin{split}
         \Big(2\mathbf N-2N+\frac{\| \mathbf x-\mathbf y\|^2}{t}\Big)h_t(\mathbf x,\mathbf y)&\leq (1-\varepsilon)\frac{\|\mathbf x-\mathbf y\|^2}{t}h_t(\mathbf x,\mathbf y)\\
         &+ 4C_\varepsilon''w(B(\mathbf{x},\sqrt{t}))^{-1}  e^{-(1-2\varepsilon)\| \mathbf x-\mathbf y\|^2/4t}\\
         &+2\sum_{\alpha\in R} k(\alpha)h_t(\mathbf{x},\sigma_\alpha(\mathbf y)),
    \end{split}
\end{equation}
which finally leads to~\eqref{eq:upper_1}, because, by our assumption, $2\mathbf N-2N>0$. 
\end{proof}

Observe that our basic upper and lower bounds (see~\eqref{eq:lower_1} and~\eqref{eq:upper_1}) are of the same type and they differ by the constants in the exponent of the first component. 

{From now on the constants $C_1,c_1$ from Proposition \ref{propo:basic} are fixed.} 

\begin{remark}
{\normalfont The estimate~\eqref{eq:upper_1} together with~\eqref{eq:heat_JFAA} imply the known bounds  
\begin{equation}\label{eq:heat_studia}
    h_t(\mathbf x,\mathbf y) \leq Cw(B(\mathbf x,\sqrt{t}))^{-1}\Big(1+\frac{\| \mathbf x-\mathbf y\|}{\sqrt{t}}\Big)^{-2} e^{-cd(\mathbf x,\mathbf y)^2/t}
\end{equation}
see~\cite[Theorem 3.1]{DzH1}. An alternative proof of  \eqref{eq:heat_studia} which uses a Poincar\'e inequality was announced by W. Hebisch. 
}\end{remark}

{
  \section{The case of the dihedral group - proof of Theorem \ref{th:dihedral}}
  
Let $D_m$ be a regular $m$-polygon in $\mathbb R^2$, $m\geq 3$, such that the related root system $R$ consists of $2m$ vectors
 \begin{align*}
      \alpha_j=\sqrt{2}\left(\sin\left(\frac{\pi j}{m}\right),\cos\left(\frac{\pi j}{m}\right)\right),  \ \ \ j \in \{0,1,\ldots,2m-1\},
  \end{align*}
and the reflection group $G$ acts either by the symmetries $\sigma_{\alpha_j}$, or by the rotations $\sigma_{\alpha_j}\circ \sigma_{\alpha_i}$, $0 \leq i,j \leq 2m-1$. Consequently, $\max_{\mathbf x,\mathbf y\in\mathbb R^2} n (\mathbf x,\mathbf y)=2$.
  \begin{proof}[Proof of Theorem \ref{th:dihedral}] \ 
  Fix $0<c_u<c_1$, where $c_1$ is a constant from Proposition~\ref{propo:basic}.
  Let us consider three cases depending on the value of $n(\mathbf{x},\mathbf{y})$.

  {\bf Case $n(\mathbf x,\mathbf y)=0$.} By the definition of $n(\mathbf{x},\mathbf{y})$ (see~\eqref{eq:n}), in this case $\| \mathbf x-\mathbf y\|=d(\mathbf x,\mathbf y)$. Hence Proposition~\ref{propo:heat} reads 
\begin{equation}\label{upl_0} C_{\rm lower}w(B(\mathbf{x},\sqrt{t}))^{-1} e^{-c_{\rm lower}d(\mathbf{x},\mathbf{y})^2/t}\leq h_t(\mathbf{x},\mathbf{y}) \leq C_{\rm upper} w(B(\mathbf{x},\sqrt{t}))^{-1}  e^{-c_{\rm upper}d(\mathbf{x},\mathbf{y})^2/t}, 
\end{equation}
which are the desired estimates, since $\Lambda_D(\mathbf{x},\mathbf{y},t)=1$ in this case. 

{\bf Case $n(\mathbf x,\mathbf y)=1$.} 
Then, by the definition of $n(\mathbf{x},\mathbf{y})$ (see~\eqref{eq:n}), there is $\alpha_0\in R$ such that $n (\mathbf{x},\sigma_{\alpha_0}(\mathbf y))=0$, that is,  $\| \mathbf{x}-\sigma_{\alpha_0}(\mathbf y)\|=d(\mathbf x,\mathbf y)$.
Using~\eqref{eq:lower_1}, we get 
\begin{equation}
\begin{split}\label{eq:lower-l1}
     h_t(\mathbf x,\mathbf y)&\geq C_1^{-1}\Big(1+\frac{\|\mathbf x-\mathbf y\|}{\sqrt{t}}\Big)^{-2}\sum_{\alpha\in R}h_t(\mathbf{x},\sigma_{\alpha}(\mathbf y)) \\
     &\geq C_1^{-1}\Big(1+\frac{\|\mathbf x-\mathbf y\|}{\sqrt{t}}\Big)^{-2}h_t(\mathbf{x},\sigma_{\alpha_0}(\mathbf y))\\& \geq  C_{1}^{-1}C_{\rm lower}w(B(\mathbf{x},\sqrt{t}))^{-1} e^{-c_{\rm lower} d(\mathbf x,\mathbf y)^2/t}\Big(1+\frac{\|\mathbf x-\mathbf y\|}{\sqrt{t}}\Big)^{-2}, 
\end{split}
\end{equation} 
where in the last inequality we have used~\eqref{upl_0}.

In order to prove the upper bound, we use \eqref{eq:upper_1}, Proposition~\ref{propo:heat}  together with the inequality  $d(\mathbf{x},\mathbf{y}) \leq \|\mathbf{x}-\mathbf{y}\|$ and obtain 
\begin{equation}\label{eq:upper-l1}
     h_t(\mathbf x,\mathbf y)\leq C_u w(B(\mathbf{x},\sqrt{t}))^{-1} e^{-c_{u}d(\mathbf x,\mathbf y)^2/t} \Big(1+\frac{\| \mathbf x-\mathbf y\|}{\sqrt{t}}\Big)^{-2}.
\end{equation}

{\bf Case of $ n(\mathbf x,\mathbf y)=2$.} In the proof of the upper and lower bounds we use the fact that, 
in this case, $n(\mathbf{x},\sigma_\alpha(\mathbf y))=1$ for all $\alpha \in R$.
 
 We start by proving the lower bound. Using  \eqref{eq:lower_1} we have 
\begin{equation}\begin{split}
  h_t(\mathbf x,\mathbf y)& \geq   C_1^{-1}\left(1+\frac{\|\mathbf{x}-\mathbf{y}\|}{\sqrt{t}}\right)^{-2} \sum_{\alpha \in R} h_{t}(\mathbf{x},\sigma_{\alpha}(\mathbf{y}))) \\
  &\geq  C_1^{-2}C_{\rm lower}  w(B(\mathbf{x},\sqrt{t}))^{-1}e^{-c_{\rm lower}\frac{d(\mathbf x,\mathbf y)^2}{t}} \left(1+\frac{\|\mathbf{x}-\mathbf{y}\|}{\sqrt{t}}\right)^{-2}
  \sum_{\alpha\in R}  \left(1+\frac{\|\mathbf{x}-\sigma_{\alpha}(\mathbf{y})\|}{\sqrt{t}}\right)^{-2} ,
  \end{split}
\end{equation}
where in the last inequality we have used~\eqref{eq:lower-l1}, since $n(\mathbf{x},\mathbf \sigma_{\alpha}(\mathbf y))=1$ for all $\alpha \in R$.

 In order to obtain upper bound, we apply \eqref{eq:upper_1} and then \eqref{eq:upper-l1}, and get 
\begin{equation}\begin{split}\label{eq:upper_l2}
    h_t(\mathbf{x},\mathbf{y}) &\leq C_1\Bigg(w(B(\mathbf{x},\sqrt{t}))^{-1}e^{-c_{1}\frac{\|\mathbf{x}-\mathbf{y}\|^2}{t}}+\left(1+\frac{\|\mathbf{x}-\mathbf{y}\|}{\sqrt{t}}\right)^{-2}\sum_{\alpha \in R}h_{t}(\mathbf{x},\sigma_{\alpha}(\mathbf{y}))\Bigg)\\
    &\leq C_1 w(B(\mathbf{x},\sqrt{t}))^{-1}e^{-c_{1}\frac{\|\mathbf{x}-\mathbf{y}\|^2}{t}}\\
   &\  + C_1C_{u}w(B(\mathbf{x},\sqrt{t}))^{-1}e^{-c_{1}d(\mathbf x,\mathbf y)^2/t}\left(1+\frac{\|\mathbf{x}-\mathbf{y}\|}{\sqrt{t}}\right)^{-2}\sum_{\alpha \in R} \Bigg(1+\frac{\| \mathbf{x}-\sigma_\alpha (\mathbf y)\|}{\sqrt{t}}\Bigg)^{-2}.\\
\end{split}
\end{equation}
Let now $\alpha_0\in R$ be such that $\|\mathbf{x}-\sigma_{\alpha_0}(\mathbf y)\|=\min_{\alpha \in R} \| \mathbf{x}-\sigma_{\alpha}(\mathbf y)\|$. Then  
\begin{align*}
    d(\mathbf x,\mathbf y)\leq \|\mathbf{x}-\sigma_{\alpha_0}(\mathbf y)\|\leq \|\mathbf x-\mathbf y\|    
\end{align*}
(see Lemma \ref{lem:alpha}). Thus, from \eqref{eq:upper_l2} we conclude that 
\begin{equation}\begin{split}
    h_t(\mathbf{x},\mathbf{y}) \leq 
   &C'_{u}w(B(\mathbf{x},\sqrt{t}))^{-1}e^{-c_ud(\mathbf x,\mathbf y)^2/t} \left(1+\frac{\|\mathbf{x}-\mathbf{y}\|}{\sqrt{t}}\right)^{-2}\Big(1+\frac{\|\mathbf{x}- \sigma_{\alpha_0} (\mathbf y)\|}{\sqrt{t}}\Big)^{-2}, \\
\end{split} 
\end{equation}
which implies the desired estimate \eqref{eq:main_claim_D}.

 \end{proof} }
  
  \section{Proof of Theorem \ref{teo:1} }
  
  \subsection{Proof of the lower bound \texorpdfstring{~\eqref{eq:main_lower}}{1}}
  
  The proposition below combined with Corollary~\ref{coro:ciag_skracajacy} imply~\eqref{eq:main_lower}.
  
  \begin{proposition}\label{propo:lower_one}
  Assume that $C_{\rm lower},c_{\rm lower}$ are the constants from Proposition~\ref{propo:heat} and $C_1$ is the constant from Proposition~\ref{propo:basic}. For all $\mathbf{x},\mathbf{y} \in \mathbb{R}^N$, $t>0$, and $\boldsymbol{\alpha} \in \mathcal{A}(\mathbf{x},\mathbf{y})$ we have
  \begin{equation}\label{eq:induction}
  \begin{split}
      h_{t}(\mathbf{x},\mathbf{y}) &\geq C_1^{-\ell(\boldsymbol{\alpha})} \rho_{\boldsymbol{\alpha}}(\mathbf{x},\mathbf{y},t)h_{t}(\mathbf{x},\sigma_{\boldsymbol{\alpha}}(\mathbf{y})) \\&\geq  C_{1}^{-\ell(\boldsymbol{\alpha})}C_{\rm lower} w(B(\mathbf{x},\sqrt{t}))^{-1}e^{-c_{\rm lower}\frac{d(\mathbf{x},\mathbf{y})^2}{t}}\rho_{\boldsymbol{\alpha}}(\mathbf{x},\mathbf{y},t) 
     \end{split}
  \end{equation}
  \end{proposition}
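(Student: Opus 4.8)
The plan is to prove~\eqref{eq:induction} by induction on $\ell(\boldsymbol{\alpha})$, using Proposition~\ref{propo:basic} repeatedly. The second inequality in~\eqref{eq:induction} is immediate once the first is established: if $\boldsymbol{\alpha} \in \mathcal{A}(\mathbf{x},\mathbf{y})$, then by definition $n(\mathbf{x},\sigma_{\boldsymbol{\alpha}}(\mathbf{y}))=0$, so $\|\mathbf{x}-\sigma_{\boldsymbol{\alpha}}(\mathbf{y})\|=d(\mathbf{x},\sigma_{\boldsymbol{\alpha}}(\mathbf{y}))=d(\mathbf{x},\mathbf{y})$, and the lower bound in Proposition~\ref{propo:heat} gives $h_t(\mathbf{x},\sigma_{\boldsymbol{\alpha}}(\mathbf{y})) \geq C_{\rm lower} w(B(\mathbf{x},\sqrt{t}))^{-1} e^{-c_{\rm lower} d(\mathbf{x},\mathbf{y})^2/t}$. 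So the real content is the first inequality.

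For the base case $\ell(\boldsymbol{\alpha})=0$, i.e. $\boldsymbol{\alpha}=\emptyset$, we have $\sigma_{\boldsymbol{\alpha}}=I$, $\rho_{\boldsymbol{\alpha}}\equiv 1$, and $C_1^{-\ell(\boldsymbol{\alpha})}=1$, so the claim reads $h_t(\mathbf{x},\mathbf{y})\geq h_t(\mathbf{x},\mathbf{y})$, which is trivial. For the inductive step, suppose $\boldsymbol{\alpha}=(\alpha_1,\ldots,\alpha_m)$ with $m\geq 1$ and write $\boldsymbol{\alpha}'=(\alpha_2,\ldots,\alpha_m)$ for the tail. Apply the lower bound~\eqref{eq:lower_1} of Proposition~\ref{propo:basic}, discarding the first (Gaussian) summand and keeping only the term with $\alpha=\alpha_1$ in the sum over $R$:
\begin{equation*}
h_t(\mathbf{x},\mathbf{y}) \geq C_1^{-1}\left(1+\frac{\|\mathbf{x}-\mathbf{y}\|}{\sqrt{t}}\right)^{-2} h_t(\mathbf{x},\sigma_{\alpha_1}(\mathbf{y})).
\end{equation*}
Now I would like to apply the inductive hypothesis to the pair $(\mathbf{x},\sigma_{\alpha_1}(\mathbf{y}))$ with the sequence $\boldsymbol{\alpha}'$ of length $m-1$. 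For this I need to check that $\boldsymbol{\alpha}' \in \mathcal{A}(\mathbf{x},\sigma_{\alpha_1}(\mathbf{y}))$, i.e. that $n(\mathbf{x},\sigma_{\boldsymbol{\alpha}'}(\sigma_{\alpha_1}(\mathbf{y})))=0$; but $\sigma_{\boldsymbol{\alpha}'}\circ\sigma_{\alpha_1}=\sigma_{\alpha_m}\circ\cdots\circ\sigma_{\alpha_2}\circ\sigma_{\alpha_1}=\sigma_{\boldsymbol{\alpha}}$, and $n(\mathbf{x},\sigma_{\boldsymbol{\alpha}}(\mathbf{y}))=0$ since $\boldsymbol{\alpha}\in\mathcal{A}(\mathbf{x},\mathbf{y})$, so indeed $\boldsymbol{\alpha}'\in\mathcal{A}(\mathbf{x},\sigma_{\alpha_1}(\mathbf{y}))$. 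The inductive hypothesis then yields
\begin{equation*}
h_t(\mathbf{x},\sigma_{\alpha_1}(\mathbf{y})) \geq C_1^{-(m-1)} \rho_{\boldsymbol{\alpha}'}(\mathbf{x},\sigma_{\alpha_1}(\mathbf{y}),t)\, h_t(\mathbf{x},\sigma_{\boldsymbol{\alpha}'}(\sigma_{\alpha_1}(\mathbf{y}))) = C_1^{-(m-1)} \rho_{\boldsymbol{\alpha}'}(\mathbf{x},\sigma_{\alpha_1}(\mathbf{y}),t)\, h_t(\mathbf{x},\sigma_{\boldsymbol{\alpha}}(\mathbf{y})).
\end{equation*}

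It remains to verify the bookkeeping identity
\begin{equation*}
\left(1+\frac{\|\mathbf{x}-\mathbf{y}\|}{\sqrt{t}}\right)^{-2} \rho_{\boldsymbol{\alpha}'}(\mathbf{x},\sigma_{\alpha_1}(\mathbf{y}),t) = \rho_{\boldsymbol{\alpha}}(\mathbf{x},\mathbf{y},t),
\end{equation*}
which follows directly by inspecting the definition~\eqref{eq:rho}: the factor $(1+\|\mathbf{x}-\mathbf{y}\|/\sqrt{t})^{-2}$ supplies the first factor of $\rho_{\boldsymbol{\alpha}}$, while $\rho_{\boldsymbol{\alpha}'}(\mathbf{x},\sigma_{\alpha_1}(\mathbf{y}),t)$, when expanded, has factors $(1+\|\mathbf{x}-\sigma_{\alpha_j}\circ\cdots\circ\sigma_{\alpha_2}(\sigma_{\alpha_1}(\mathbf{y}))\|/\sqrt{t})^{-2}$ for $j$ ranging appropriately, and $\sigma_{\alpha_j}\circ\cdots\circ\sigma_{\alpha_2}\circ\sigma_{\alpha_1}$ matches the composition appearing in $\rho_{\boldsymbol{\alpha}}$; the indices line up exactly so that the product over the tail reproduces the remaining factors of $\rho_{\boldsymbol{\alpha}}$. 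Combining the three displays gives $h_t(\mathbf{x},\mathbf{y}) \geq C_1^{-1}\cdot C_1^{-(m-1)} \rho_{\boldsymbol{\alpha}}(\mathbf{x},\mathbf{y},t)\, h_t(\mathbf{x},\sigma_{\boldsymbol{\alpha}}(\mathbf{y})) = C_1^{-m}\rho_{\boldsymbol{\alpha}}(\mathbf{x},\mathbf{y},t)\, h_t(\mathbf{x},\sigma_{\boldsymbol{\alpha}}(\mathbf{y}))$, completing the induction. The only subtle point — and the one I would be most careful about — is the index alignment in the factorization of $\rho$ and the verification that the tail sequence is admissible for the shifted pair; everything else is a direct application of~\eqref{eq:lower_1} and~\eqref{upl}. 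Note finally that, as observed in the text, $\rho_{\boldsymbol\alpha}(\mathbf x,\sigma_{\alpha_1}(\mathbf y),t)\le 1$ pointwise, so no positivity issues arise in dropping terms.
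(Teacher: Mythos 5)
Your proof is correct and follows essentially the same route as the paper: induction on $\ell(\boldsymbol{\alpha})$, applying the lower bound \eqref{eq:lower_1} to peel off $\sigma_{\alpha_1}$, checking that the tail sequence is admissible for $(\mathbf{x},\sigma_{\alpha_1}(\mathbf{y}))$, and using the factorization $\rho_{\boldsymbol{\alpha}}(\mathbf{x},\mathbf{y},t)=\bigl(1+\|\mathbf{x}-\mathbf{y}\|/\sqrt{t}\bigr)^{-2}\rho_{\widetilde{\boldsymbol{\alpha}}}(\mathbf{x},\sigma_{\alpha_1}(\mathbf{y}),t)$, with the second inequality coming from Proposition~\ref{propo:heat} and $\|\mathbf{x}-\sigma_{\boldsymbol{\alpha}}(\mathbf{y})\|=d(\mathbf{x},\mathbf{y})$. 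The only cosmetic difference is that you start the induction at $m=0$ while the paper lists $m=0$ and $m=1$ as base cases; both are fine.
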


  \begin{proof}
  The proof is by induction with respect to $m=\ell(\boldsymbol{\alpha})$. For $m=0$ and $m=1$ the claim is a consequence of Proposition~\ref{propo:heat}  and~\eqref{eq:lower_1} (see also~\eqref{eq:lower-l1}). Assume that~\eqref{eq:induction} holds for all $\mathbf{x}_1,\mathbf{y}_1 \in \mathbb{R}^N$, $t_1>0$, and $\widetilde{\boldsymbol{\alpha}} \in \mathcal{A}(\mathbf{x}_1,\mathbf{y}_1)$ such that $\ell(\widetilde{\boldsymbol{\alpha}})=m$. Let $\boldsymbol{\alpha}=(\alpha_1,\alpha_2,\ldots,\alpha_{m+1}) \in \mathcal{A}(\mathbf{x},\mathbf{y})$ be such that $\ell(\boldsymbol{\alpha})=m+1$. By~\eqref{eq:lower_1} we have
  \begin{align*}
      h_{t}(\mathbf{x},\mathbf{y}) \geq C_{1}^{-1}\left(1+\frac{\|\mathbf{x}-\mathbf{y}\|}{\sqrt{t}}\right)^{-2}h_{t}(\mathbf{x},\sigma_{\alpha_1}(\mathbf{y})).
  \end{align*}
  Note that $\boldsymbol{\alpha} \in \mathcal{A}(\mathbf{x},\mathbf{y})$ implies that the sequence $\widetilde{\boldsymbol{\alpha}}=(\alpha_2,\ldots,\alpha_{m+1})$ belongs to $\mathcal{A}(\mathbf{x},\sigma_{\alpha_1}(\mathbf{y}))$ and, obviously, $\ell(\widetilde{\boldsymbol{\alpha}})=m$. Therefore, the claim is a consequence of the induction hypothesis applied to $\mathbf{x}$, $\sigma_{\alpha_1}(\mathbf{y})$, and $\widetilde{\boldsymbol{\alpha}}$, and the fact that, by the definition of $\rho_{\boldsymbol{\alpha}}(\mathbf{x},\mathbf{y},t)$ and $\rho_{\widetilde{\boldsymbol{\alpha}}}(\mathbf{x},\sigma_{\alpha_1}(\mathbf{y}),t)$ (see~\eqref{eq:rho}), we have
  \begin{align*}
      \rho_{\boldsymbol{\alpha}}(\mathbf{x},\mathbf{y},t)=\left(1+\frac{\|\mathbf{x}-\mathbf{y}\|}{\sqrt{t}}\right)^{-2}\rho_{\widetilde{\boldsymbol{\alpha}}}(\mathbf{x},\sigma_{\alpha_1}(\mathbf{y}),t).
  \end{align*}
  \end{proof}
  
  \subsection{Proof of the upper bound \texorpdfstring{~\eqref{eq:main_claim}}{1}}
  
  Let us begin with a corollary which follows by Proposition~\ref{propo:lower_one}.
  \begin{corollary}\label{coro:dolne_po_skracajacym}
  Assume that $c_{\rm lower}$ is the constant from Proposition~\ref{propo:heat}. Then there is a constant $C_2>0$ such that for all $\mathbf{x},\mathbf{y} \in \mathbb{R}^N$ and $t>0$ we have
  \begin{equation}\label{eq:lower2G}
      C_2^{-1}w(B(\mathbf{x},\sqrt{t}))^{-1}e^{-c_{\rm lower}\frac{d(\mathbf{x},\mathbf{y})^2}{t}}\left(1+\frac{\|\mathbf{x}-\mathbf{y}\|}{\sqrt{t}}\right)^{-2|G|} \leq h_t(\mathbf{x},\mathbf{y}).
  \end{equation}
  \end{corollary}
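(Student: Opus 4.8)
\textbf{Proof proposal for Corollary~\ref{coro:dolne_po_skracajacym}.}

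The plan is to feed a well-chosen admissible sequence into the first inequality of Proposition~\ref{propo:lower_one} and then control the resulting factors uniformly. First I would dispose of the trivial case $n(\mathbf{x},\mathbf{y})=0$: there $d(\mathbf{x},\mathbf{y})=\|\mathbf{x}-\mathbf{y}\|$, and Proposition~\ref{propo:heat} already gives $h_t(\mathbf{x},\mathbf{y})\gtrsim w(B(\mathbf{x},\sqrt{t}))^{-1}e^{-c_{\rm lower}d(\mathbf{x},\mathbf{y})^2/t}$, which dominates the right-hand side of~\eqref{eq:lower2G} since the extra factor $(1+\|\mathbf{x}-\mathbf{y}\|/\sqrt{t})^{-2|G|}\leq 1$. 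So assume $n(\mathbf{x},\mathbf{y})>0$.

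Next I would invoke Corollary~\ref{coro:ciag_skracajacy} to obtain a sequence $\boldsymbol{\alpha}=(\alpha_1,\ldots,\alpha_m)$ with $1\leq m\leq|G|$ along which the distances $\|\mathbf{x}-\sigma_{\alpha_j}\circ\cdots\circ\sigma_{\alpha_1}(\mathbf{y})\|$ strictly decrease and with $n(\mathbf{x},\sigma_{\boldsymbol{\alpha}}(\mathbf{y}))=0$, i.e. $\boldsymbol{\alpha}\in\mathcal{A}(\mathbf{x},\mathbf{y})$. Applying the second inequality of Proposition~\ref{propo:lower_one} to this $\boldsymbol{\alpha}$ gives
\begin{equation*}
h_t(\mathbf{x},\mathbf{y})\geq C_1^{-\ell(\boldsymbol{\alpha})}C_{\rm lower}\,w(B(\mathbf{x},\sqrt{t}))^{-1}e^{-c_{\rm lower}\frac{d(\mathbf{x},\mathbf{y})^2}{t}}\rho_{\boldsymbol{\alpha}}(\mathbf{x},\mathbf{y},t).
\end{equation*}
Since $\ell(\boldsymbol{\alpha})=m\leq|G|$, the constant $C_1^{-\ell(\boldsymbol{\alpha})}\geq C_1^{-|G|}$ is bounded below uniformly (recall $C_1\geq 1$). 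It remains to bound $\rho_{\boldsymbol{\alpha}}(\mathbf{x},\mathbf{y},t)$ from below by a multiple of $(1+\|\mathbf{x}-\mathbf{y}\|/\sqrt{t})^{-2|G|}$. Here the key observation is that $\rho_{\boldsymbol{\alpha}}$ is a product of $m\leq|G|$ factors of the form $(1+\|\mathbf{x}-\sigma_{\alpha_j}\circ\cdots\circ\sigma_{\alpha_1}(\mathbf{y})\|/\sqrt{t})^{-2}$, and by the strict-decrease property~\eqref{eq:ciag_skracajacy} each of these intermediate norms is at most $\|\mathbf{x}-\mathbf{y}\|$; hence every factor is $\geq(1+\|\mathbf{x}-\mathbf{y}\|/\sqrt{t})^{-2}$. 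Multiplying $m$ such factors and using $m\leq|G|$ together with $(1+\|\mathbf{x}-\mathbf{y}\|/\sqrt{t})^{-2}\leq 1$ yields $\rho_{\boldsymbol{\alpha}}(\mathbf{x},\mathbf{y},t)\geq(1+\|\mathbf{x}-\mathbf{y}\|/\sqrt{t})^{-2|G|}$. Collecting constants, we may take $C_2=C_1^{|G|}C_{\rm lower}^{-1}$.

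I do not expect a genuine obstacle here; the only point requiring a little care is to make sure the shortening sequence from Corollary~\ref{coro:ciag_skracajacy} indeed lies in $\mathcal{A}(\mathbf{x},\mathbf{y})$ (which is exactly condition~\eqref{eq:skracajacy_koniec}) and that its length never exceeds $|G|$, so that the exponent $2|G|$ in the statement is the right uniform bound — both are guaranteed by that corollary. Everything else is bookkeeping of multiplicative constants.
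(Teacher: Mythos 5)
Your argument is correct and is essentially the paper's own proof: dispose of the case $n(\mathbf{x},\mathbf{y})=0$ via Proposition~\ref{propo:heat}, then feed the shortening sequence from Corollary~\ref{coro:ciag_skracajacy} into Proposition~\ref{propo:lower_one} and use the monotonicity~\eqref{eq:ciag_skracajacy} to bound $\rho_{\boldsymbol{\alpha}}(\mathbf{x},\mathbf{y},t)$ below by $\bigl(1+\|\mathbf{x}-\mathbf{y}\|/\sqrt{t}\bigr)^{-2|G|}$. The only difference is that you spell out the factor-by-factor estimate that the paper leaves implicit; the constants match.
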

  
  \begin{proof}
If $n(\mathbf x,\mathbf y)=0$, then \eqref{eq:lower2G} holds by Proposition \ref{propo:heat}, because $d(\mathbf x,\mathbf y)=\| \mathbf x-\mathbf y\|$ in this case. For  fixed  $\mathbf{x},\mathbf{y} \in \mathbb{R}^N$ such that $n(\mathbf x,\mathbf y)\geq 1$, let $\boldsymbol{\alpha}=(\alpha_1,\alpha_2,\ldots,\alpha_m)$,   $m\leq |G|$, be as in Corollary~\ref{coro:ciag_skracajacy}. Then, thanks to ~\eqref{eq:ciag_skracajacy}, we have
   \begin{align*}
       \rho_{\boldsymbol{\alpha}}(\mathbf{x},\mathbf{y},t) \geq \left(1+\frac{\|\mathbf{x}-\mathbf{y}\|}{\sqrt{t}}\right)^{-2|G|},
   \end{align*}
   so the claim follows by Proposition~\ref{propo:lower_one}.
\end{proof}
From now on the constant $C_2$ from Corollary \ref{coro:dolne_po_skracajacym} is fixed.

\begin{proposition}\label{propo:d_small_t_small}
Let $C_1>0$ be the constant from Proposition~\ref{propo:basic}. There is a constant $c_2>\max(1,{4C_1|G|})$ such that for all $\mathbf{x},\mathbf{y} \in \mathbb{R}^N$ and $t>0$ satisfying
\begin{equation}\label{eq:assumptions}
    \|\mathbf{x}-\mathbf{y}\|>c_2d(\mathbf{x},\mathbf{y}) \text{ and } \|\mathbf{x}-\mathbf{y}\|>c_2\sqrt{t} 
\end{equation}
we have
\begin{equation}
    h_t(\mathbf{x},\mathbf{y}) \leq 2C_1 \left(1+\frac{\|\mathbf{x}-\mathbf{y}\|}{\sqrt{t}}\right)^{-2}\sum_{\alpha \in R}h_t(\mathbf{x},\sigma_{\alpha}(\mathbf{y})).
\end{equation}
\end{proposition}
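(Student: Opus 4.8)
The plan is to start from the identity \eqref{eq:basic}, which after dividing by $h_t(\mathbf x,\mathbf y)$ and rearranging reads
\begin{equation*}
\Big(2\mathbf N-2N+\frac{\|\mathbf x-\mathbf y\|^2}{t}\Big)h_t(\mathbf x,\mathbf y)=4tI_2(t,\mathbf x,\mathbf y)+2\sum_{\alpha\in R}k(\alpha)h_t(\mathbf x,\sigma_\alpha(\mathbf y)).
\end{equation*}
Under the hypothesis \eqref{eq:assumptions} the factor $2\mathbf N-2N+\|\mathbf x-\mathbf y\|^2/t$ is comparable to $\|\mathbf x-\mathbf y\|^2/t$, so it suffices to show that the term $4tI_2(t,\mathbf x,\mathbf y)$ is dominated by the sum $2\sum_{\alpha\in R}k(\alpha)h_t(\mathbf x,\sigma_\alpha(\mathbf y))$, up to a constant. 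Equivalently, I must show that the ``bulk'' part of R\"osler's integral, coming from $\eta$ with $A(\mathbf x,\mathbf y,\eta)$ close to $\|\mathbf x-\mathbf y\|$, contributes negligibly compared with the heat kernels evaluated at the reflected points.

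The key step is to split the integral defining $tI_2(t,\mathbf x,\mathbf y)$ exactly as in the proof of Proposition~\ref{propo:basic}: for a suitable small $\varepsilon>0$ write $tI_2=tI_{2,1}+tI_{2,2}$ according to whether $A(\mathbf x,\mathbf y,\eta)^2\le(1-\varepsilon)\|\mathbf x-\mathbf y\|^2$ or not. The ``outer'' part $tI_{2,2}$ is, by the computation in \eqref{eq:I22}, bounded by $C''_\varepsilon w(B(\mathbf x,\sqrt t))^{-1}e^{-(1-2\varepsilon)\|\mathbf x-\mathbf y\|^2/4t}$; this Gaussian in $\|\mathbf x-\mathbf y\|$ is much smaller than the lower bound for some $h_t(\mathbf x,\sigma_\alpha(\mathbf y))$ provided by Corollary~\ref{coro:dolne_po_skracajacym}, because $\|\mathbf x-\mathbf y\|\gg d(\mathbf x,\mathbf y)$ forces the Gaussian factor $e^{-(1-2\varepsilon)\|\mathbf x-\mathbf y\|^2/4t}$ to be small relative to $e^{-c_{\rm lower}d(\mathbf x,\mathbf y)^2/t}$ times a polynomial loss $(1+\|\mathbf x-\mathbf y\|/\sqrt t)^{-2|G|}$; here I will use $\|\mathbf x-\mathbf y\|>c_2d(\mathbf x,\mathbf y)$ and $\|\mathbf x-\mathbf y\|>c_2\sqrt t$ with $c_2$ taken large enough, and I will apply Corollary~\ref{coro:dolne_po_skracajacym} to at least one reflected point $\sigma_\alpha(\mathbf y)$ noting $d(\mathbf x,\sigma_\alpha(\mathbf y))=d(\mathbf x,\mathbf y)$ and $\|\mathbf x-\sigma_\alpha(\mathbf y)\|\le$ (a fixed multiple of) $\|\mathbf x-\mathbf y\|+d(\mathbf x,\mathbf y)\lesssim\|\mathbf x-\mathbf y\|$. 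The ``inner'' part $tI_{2,1}$ is, by \eqref{eq:I21}, at most $(1-\varepsilon)\frac{\|\mathbf x-\mathbf y\|^2}{4t}h_t(\mathbf x,\mathbf y)$; this term is absorbed into the left-hand side since its coefficient $(1-\varepsilon)/4$ is strictly less than the coefficient of $\|\mathbf x-\mathbf y\|^2/t$ on the left after one rewrites $4tI_{2,1}\le(1-\varepsilon)\frac{\|\mathbf x-\mathbf y\|^2}{t}h_t(\mathbf x,\mathbf y)$ — wait, more precisely $4tI_{2,1}\le(1-\varepsilon)\frac{\|\mathbf x-\mathbf y\|^2}{t}h_t$, which after moving to the left side leaves the positive coefficient $\varepsilon\frac{\|\mathbf x-\mathbf y\|^2}{t}+2\mathbf N-2N\ge\varepsilon\frac{\|\mathbf x-\mathbf y\|^2}{t}$ multiplying $h_t(\mathbf x,\mathbf y)$.

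Putting these together gives
\begin{equation*}
\varepsilon\frac{\|\mathbf x-\mathbf y\|^2}{t}h_t(\mathbf x,\mathbf y)\le 4C''_\varepsilon w(B(\mathbf x,\sqrt t))^{-1}e^{-(1-2\varepsilon)\|\mathbf x-\mathbf y\|^2/4t}+2\sum_{\alpha\in R}k(\alpha)h_t(\mathbf x,\sigma_\alpha(\mathbf y)),
\end{equation*}
and then choosing $c_2$ large (depending on $\varepsilon$, $C_2$, $C_1$, $|G|$, and $\max_\alpha k(\alpha)$) makes the first term on the right at most, say, $\sum_{\alpha\in R}k(\alpha)h_t(\mathbf x,\sigma_\alpha(\mathbf y))$ by the Corollary~\ref{coro:dolne_po_skracajacym} estimate for one reflected point; one also uses $\varepsilon\frac{\|\mathbf x-\mathbf y\|^2}{t}\gtrsim(1+\|\mathbf x-\mathbf y\|/\sqrt t)^{2}$ under \eqref{eq:assumptions}. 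Dividing through and relabelling constants yields $h_t(\mathbf x,\mathbf y)\le 2C_1(1+\|\mathbf x-\mathbf y\|/\sqrt t)^{-2}\sum_{\alpha\in R}h_t(\mathbf x,\sigma_\alpha(\mathbf y))$, after choosing $c_2$ so that the factor $2C_1$ is attained (the freedom in $c_2$ lets us make the implicit constant as small as we like, in particular $\le 2C_1$, and the quantitative requirement $c_2>\max(1,4C_1|G|)$ is imposed to also serve later steps of the iteration). I expect the main obstacle to be bookkeeping the dependence of $\varepsilon$ and $c_2$ on all the structural constants so that the Gaussian ``error'' term is genuinely swallowed by the reflected-kernel sum; the analytic content is entirely contained in the split already performed for Proposition~\ref{propo:basic} and in the lower bound of Corollary~\ref{coro:dolne_po_skracajacym}, so no new hard estimate is needed.
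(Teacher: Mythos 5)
Your core mechanism is the same as the paper's: under \eqref{eq:assumptions} the Gaussian term produced by the split of $tI_2$ is swallowed using the lower bound of Corollary~\ref{coro:dolne_po_skracajacym}. (The paper does this more economically by citing \eqref{eq:upper_1}, which already packages the split you redo, and showing that $w(B(\mathbf{x},\sqrt{t}))^{-1}e^{-c_1\|\mathbf{x}-\mathbf{y}\|^2/t}\le\tfrac{1}{2C_1}h_t(\mathbf{x},\mathbf{y})$, i.e.\ it compares the Gaussian error with $h_t(\mathbf{x},\mathbf{y})$ itself via Corollary~\ref{coro:dolne_po_skracajacym} applied to the pair $(\mathbf{x},\mathbf{y})$.) The step where you instead dominate the Gaussian by a reflected kernel is flawed as written: the claim $\|\mathbf{x}-\sigma_\alpha(\mathbf{y})\|\lesssim\|\mathbf{x}-\mathbf{y}\|+d(\mathbf{x},\mathbf{y})$ is false for a generic $\alpha\in R$ --- if $\mathbf{x},\mathbf{y}$ are far from the origin, $\|\mathbf{x}-\sigma_\alpha(\mathbf{y})\|$ can be of order $\|\mathbf{x}\|$ while $\|\mathbf{x}-\mathbf{y}\|+d(\mathbf{x},\mathbf{y})$ stays bounded, and then the factor $\bigl(1+\|\mathbf{x}-\sigma_\alpha(\mathbf{y})\|/\sqrt{t}\bigr)^{-2|G|}$ in Corollary~\ref{coro:dolne_po_skracajacym} is far too small to beat your Gaussian. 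To make your version work you must select $\alpha$ by Lemma~\ref{lem:alpha}: since \eqref{eq:assumptions} with $c_2>1$ forces $\|\mathbf{x}-\mathbf{y}\|>d(\mathbf{x},\mathbf{y})$, hence $n(\mathbf{x},\mathbf{y})\ge1$, there is $\alpha_0$ with $\|\mathbf{x}-\sigma_{\alpha_0}(\mathbf{y})\|<\|\mathbf{x}-\mathbf{y}\|$, and only for such an $\alpha_0$ is the polynomial loss controlled by $\bigl(1+\|\mathbf{x}-\mathbf{y}\|/\sqrt{t}\bigr)^{-2|G|}$. Comparing with $h_t(\mathbf{x},\mathbf{y})$ itself avoids this issue altogether.

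The second problem is your treatment of the constant. From your displayed inequality you obtain $h_t(\mathbf{x},\mathbf{y})\le\tfrac{3}{\varepsilon}\max_\alpha k(\alpha)\,\tfrac{t}{\|\mathbf{x}-\mathbf{y}\|^2}\sum_\alpha h_t(\mathbf{x},\sigma_\alpha(\mathbf{y}))$, and the ratio of $t/\|\mathbf{x}-\mathbf{y}\|^2$ to $\bigl(1+\|\mathbf{x}-\mathbf{y}\|/\sqrt{t}\bigr)^{-2}$ tends to $1$, not to $0$, as $\|\mathbf{x}-\mathbf{y}\|/\sqrt{t}\to\infty$. So enlarging $c_2$ does \emph{not} let you "make the implicit constant as small as we like": the constant you reach is about $3\varepsilon^{-1}\max_\alpha k(\alpha)$ independently of $c_2$, and there is no a priori reason for this to be $\le 2C_1$. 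This is harmless for the sequel --- any fixed constant $K$ in place of $2C_1$ would do, provided the requirement $c_2>\max(1,4C_1|G|)$ is adjusted so that the analogue of \eqref{eq:c2_is_large} still holds --- but the mechanism you invoke to land on $2C_1$ is wrong. The clean fix is to start from \eqref{eq:upper_1}, whose absorption immediately produces the factor $2C_1$.
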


\begin{remark}\normalfont
The condition "$c_2>\max(1,{4C_1|G|})$" occurs in the formulation of the proposition for some technical reasons and it will be used later on in the proof of~\eqref{eq:main_claim}.
\end{remark}

\begin{proof}
Thanks to~\eqref{eq:upper_1} and the fact that $0<h_t(\mathbf{x},\mathbf{y})<\infty$ it is enough to show
\begin{equation}\label{eq:goal_1}
    w(B(\mathbf{x},\sqrt{t}))^{-1}e^{-c_1\frac{\|\mathbf{x}-\mathbf{y}\|^2}{t}} \leq \frac{1}{2C_1} h_t(\mathbf{x},\mathbf{y}).
\end{equation}
To this end, by Corollary~\ref{coro:dolne_po_skracajacym}, we get
\begin{align*}
    h_t(\mathbf{x},\mathbf{y}) \geq C_2^{-1}w(B(\mathbf{x},\sqrt{t}))^{-1}e^{-c_{\rm lower}\frac{d(\mathbf{x},\mathbf{y})^2}{t}} \left(1+\frac{\|\mathbf{x}-\mathbf{y}\|}{\sqrt{t}}\right)^{-2|G|},
\end{align*}
so~\eqref{eq:goal_1} is a consequence of the fact that taking $c_2>0$ large enough in~\eqref{eq:assumptions}, we have
\begin{align*}
    C_2^{-1} \left(1+\frac{\|\mathbf{x}-\mathbf{y}\|}{\sqrt{t}}\right)^{-2|G|}e^{-c_{\rm lower}\frac{d(\mathbf{x},\mathbf{y})^2}{t}} \geq 2C_1 e^{-c_1\frac{\|\mathbf{x}-\mathbf{y}\|^2}{t}}.
\end{align*}
\end{proof}

From this moment the constant $c_2$ from Proposition \ref{propo:d_small_t_small} is fixed.
 
\begin{proposition}\label{propo:small_case} Assume that $c_{\rm upper}$ is the constant from Proposition~\ref{propo:heat} and $c_2$ is the same as in Proposition~\ref{propo:d_small_t_small}.
Let $c_3<c_{\rm upper}$. There is a constant $C_3>0$ such that for all $\mathbf{x},\mathbf{y} \in \mathbb{R}^N$ and $t>0$ such that
\begin{align*}
\|\mathbf{x}-\mathbf{y}\| \leq c_2\sqrt{t} \text{ or }\|\mathbf{x}-\mathbf{y}\| \leq c_2d(\mathbf{x},\mathbf{y})    
\end{align*}
there is $\boldsymbol{\alpha}\in \mathcal{A}(\mathbf{x},\mathbf{y})$, $\ell(\boldsymbol{\alpha}) \leq |G|$, such that
\begin{equation}\label{eq:upper_rho_a}
\begin{split}
    h_t(\mathbf{x},\mathbf{y}) &\leq C_3w(B(\mathbf{x},\sqrt{t}))^{-1}e^{-c_3\frac{d(\mathbf{x},\mathbf{y})^2}{t}} \rho_{\boldsymbol{\alpha}}(\mathbf{x},\mathbf{y},t).
\end{split}
\end{equation}
\end{proposition}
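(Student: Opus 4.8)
The plan is to prove Proposition~\ref{propo:small_case} by induction on $n(\mathbf{x},\mathbf{y})$, exploiting the structure of the basic upper bound~\eqref{eq:upper_1} together with Proposition~\ref{propo:d_small_t_small}. The base case $n(\mathbf{x},\mathbf{y})=0$ is immediate: then $d(\mathbf{x},\mathbf{y})=\|\mathbf{x}-\mathbf{y}\|$, Proposition~\ref{propo:heat} gives the desired bound with $\boldsymbol{\alpha}=\emptyset$ and $\rho_{\emptyset}=1$, and any $c_3<c_{\rm upper}$ works (in particular we may pick up the gap between $c_3$ and $c_{\rm upper}$ to absorb constants here and at each later step). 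For the inductive step, suppose the statement holds for all pairs $(\mathbf{x}',\mathbf{y}')$ with $n(\mathbf{x}',\mathbf{y}')\le n-1$, and take $(\mathbf{x},\mathbf{y})$ with $n(\mathbf{x},\mathbf{y})=n\ge 1$ satisfying the hypothesis $\|\mathbf{x}-\mathbf{y}\|\le c_2\sqrt{t}$ or $\|\mathbf{x}-\mathbf{y}\|\le c_2 d(\mathbf{x},\mathbf{y})$.

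First I would dispose of the first term in~\eqref{eq:upper_1}: since $n(\mathbf{x},\mathbf{y})\ge 1$ forces $d(\mathbf{x},\mathbf{y})<\|\mathbf{x}-\mathbf{y}\|$, under either branch of the hypothesis one has $e^{-c_1\|\mathbf{x}-\mathbf{y}\|^2/t}\le C\,e^{-c_3 d(\mathbf{x},\mathbf{y})^2/t}\bigl(1+\|\mathbf{x}-\mathbf{y}\|/\sqrt t\bigr)^{-2}$ — in the case $\|\mathbf{x}-\mathbf{y}\|\le c_2\sqrt t$ because the polynomial factor is bounded below by $(1+c_2)^{-2}$ and $e^{-c_1\|\mathbf{x}-\mathbf{y}\|^2/t}\le e^{-c_3 d(\mathbf{x},\mathbf{y})^2/t}$ (as $d\le\|\mathbf{x}-\mathbf{y}\|$ and, after shrinking $c_3$, $c_3\le c_1$), and in the case $\|\mathbf{x}-\mathbf{y}\|\le c_2 d(\mathbf{x},\mathbf{y})$ by comparing exponents after again allowing $c_3$ to be small relative to $c_1/c_2^2$ and using that $\|\mathbf{x}-\mathbf{y}\|\ge d(\mathbf{x},\mathbf{y})$ controls the polynomial factor. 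This reduces matters to estimating $\bigl(1+\|\mathbf{x}-\mathbf{y}\|/\sqrt t\bigr)^{-2}\sum_{\alpha\in R}h_t(\mathbf{x},\sigma_\alpha(\mathbf{y}))$. For each $\alpha\in R$ we have $n(\mathbf{x},\sigma_\alpha(\mathbf{y}))\le n(\mathbf{x},\mathbf{y})+1$; but only those $\alpha$ with $n(\mathbf{x},\sigma_\alpha(\mathbf{y}))\le n-1$ will be handled by the inductive hypothesis directly, so the remaining terms need a separate argument.

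The key device is Proposition~\ref{propo:d_small_t_small}: for an index $\alpha$ with $n(\mathbf{x},\sigma_\alpha(\mathbf{y}))\ge n$, if the pair $(\mathbf{x},\sigma_\alpha(\mathbf{y}))$ happened to satisfy $\|\mathbf{x}-\sigma_\alpha(\mathbf{y})\|>c_2 d(\mathbf{x},\sigma_\alpha(\mathbf{y}))=c_2 d(\mathbf{x},\mathbf{y})$ and $\|\mathbf{x}-\sigma_\alpha(\mathbf{y})\|>c_2\sqrt t$, then $h_t(\mathbf{x},\sigma_\alpha(\mathbf{y}))\le 2C_1\bigl(1+\|\mathbf{x}-\sigma_\alpha(\mathbf{y})\|/\sqrt t\bigr)^{-2}\sum_{\beta\in R}h_t(\mathbf{x},\sigma_\beta\sigma_\alpha(\mathbf{y}))$, pushing the problem one reflection further out. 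The crucial geometric observation, which I expect to be the main obstacle, is that this "pushing out" cannot continue indefinitely: one must show that after at most $|G|$ applications — iterating through a decreasing chain $\|\mathbf{x}-\mathbf{y}\|>\|\mathbf{x}-\sigma_{\alpha_1}(\mathbf{y})\|>\cdots$ as in Corollary~\ref{coro:ciag_skracajacy}, or rather its reverse — one necessarily reaches a word $\sigma_{\boldsymbol\beta}(\mathbf{y})$ either with $n(\mathbf{x},\sigma_{\boldsymbol\beta}(\mathbf{y}))< n(\mathbf{x},\mathbf{y})$ (apply the inductive hypothesis) or falling under the hypothesis "$\|\mathbf{x}-\sigma_{\boldsymbol\beta}(\mathbf{y})\|\le c_2\sqrt t$ or $\le c_2 d$" with the same $n$ but with the chain now so long that the accumulated polynomial factors $\prod(1+\|\cdot\|/\sqrt t)^{-2}$ already dominate $(1+\|\mathbf{x}-\mathbf{y}\|/\sqrt t)^{-2|G|}$ — but since $d(\mathbf{x},\sigma_{\boldsymbol\beta}(\mathbf{y}))=d(\mathbf{x},\mathbf{y})$ throughout, the Gaussian factor is unchanged, so one can invoke the inductive hypothesis at a terminal node and multiply the resulting $\rho_{\boldsymbol\gamma}$ by the accumulated factors to build the desired $\rho_{\boldsymbol\alpha}$ with $\ell(\boldsymbol\alpha)\le |G|$. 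The bookkeeping that the total length stays $\le|G|$ uses that each step strictly decreases $\|\mathbf{x}-\sigma_{\boldsymbol\cdot}(\mathbf{y})\|$ — so no reflection word is repeated — combined with $c_2>4C_1|G|$ to absorb the $2C_1$ factors accrued at each of the at most $|G|$ steps into the gap between $c_3$ and $c_{\rm upper}$; making this termination-and-accounting argument precise is where the real work lies, and I would organize it as a finite recursion on the quantity $\|\mathbf{x}-\sigma_{\boldsymbol\cdot}(\mathbf{y})\|/\sqrt t$ together with the invariant that at every stage the product of collected polynomial weights times the inductive $\rho$ equals $\rho_{\boldsymbol\alpha}(\mathbf{x},\mathbf{y},t)$ for the concatenated word.
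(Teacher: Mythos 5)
Your proposal goes down a much harder road than necessary, and the hard part of that road is not actually traversed. The hypothesis of the proposition ($\|\mathbf{x}-\mathbf{y}\|\le c_2\sqrt{t}$ or $\|\mathbf{x}-\mathbf{y}\|\le c_2 d(\mathbf{x},\mathbf{y})$) is precisely the regime in which the crude Gaussian bound of Proposition~\ref{propo:heat} already implies the claim, with no recourse to~\eqref{eq:upper_1} or to Proposition~\ref{propo:d_small_t_small} and no induction. The paper's proof simply takes $\boldsymbol{\alpha}$ to be the shortening sequence of Corollary~\ref{coro:ciag_skracajacy} (so $\ell(\boldsymbol{\alpha})\le|G|$ and, by~\eqref{eq:ciag_skracajacy}, $\rho_{\boldsymbol{\alpha}}(\mathbf{x},\mathbf{y},t)\ge(1+\|\mathbf{x}-\mathbf{y}\|/\sqrt{t})^{-2|G|}$) and then observes that this polynomial factor is harmless: if $\|\mathbf{x}-\mathbf{y}\|\le c_2\sqrt{t}$ it is bounded below by the constant $(1+c_2)^{-2|G|}$, while if $\|\mathbf{x}-\mathbf{y}\|\le c_2 d(\mathbf{x},\mathbf{y})$ the spare Gaussian factor $e^{-(c_{\rm upper}-c_3)d(\mathbf{x},\mathbf{y})^2/t}\le e^{-(c_{\rm upper}-c_3)\|\mathbf{x}-\mathbf{y}\|^2/(c_2^2t)}$ dominates $(1+\|\mathbf{x}-\mathbf{y}\|/\sqrt{t})^{-2|G|}$. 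That is the entire proof, and it works for every $c_3<c_{\rm upper}$, whereas your reduction through~\eqref{eq:upper_1} would at best cap $c_3$ by the fixed constant $c_1$.

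The genuine gap in your argument is the ``termination-and-accounting'' step, which you acknowledge is where the real work lies but which you do not supply --- and the invariant you propose to drive it is false. Proposition~\ref{propo:d_small_t_small} replaces $h_t(\mathbf{x},\sigma_\alpha(\mathbf{y}))$ by the full sum $\sum_{\beta\in R}h_t(\mathbf{x},\sigma_\beta\circ\sigma_\alpha(\mathbf{y}))$; you cannot discard the terms in which $\|\mathbf{x}-\sigma_\beta\circ\sigma_\alpha(\mathbf{y})\|$ increases, so the claim that ``each step strictly decreases $\|\mathbf{x}-\sigma_{\boldsymbol{\cdot}}(\mathbf{y})\|$, hence no reflection word is repeated'' does not hold along the branches you must follow, and nothing in your sketch prevents the recursion from cycling among group elements for which~\eqref{eq:assumptions} holds. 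Making such an iteration terminate is exactly the difficulty the paper confronts later, in the proof of~\eqref{eq:main_claim}, where it is resolved not by tracking distances but by listing the values $h_t(\mathbf{x},\sigma(\mathbf{y}))$, $\sigma\in G\setminus G_0$, in decreasing order and showing by contradiction (using $c_2>4C_1|G|$) that from each new element some reflection must land on an already-controlled one. Without that ordering device your recursion has no proof of termination; and even granting termination, it would naturally produce a sum of $\rho$'s over sequences of length up to roughly $2|G|$ (i.e., the conclusion of~\eqref{eq:main_claim}) rather than the single $\rho_{\boldsymbol{\alpha}}$ with $\ell(\boldsymbol{\alpha})\le|G|$ asserted here.
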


\begin{proof}
If $n(\mathbf{x},\mathbf{y})=0$, then one can take $\boldsymbol{\alpha}=\emptyset$ and the claim is a consequence of Proposition~\ref{propo:heat}. Assume that $n(\mathbf{x},\mathbf{y})>0$. For fixed $\mathbf{x},\mathbf{y}$, let   and $\boldsymbol{\alpha}=(\alpha_1,\alpha_2,\ldots,\alpha_m)$, $m \leq |G|$, be as in Corollary~\ref{coro:ciag_skracajacy}. If $\|\mathbf{x}-\mathbf{y}\| \leq c_2\sqrt{t}$, then the claim is satisfied by Proposition~\ref{propo:heat} and~\eqref{eq:ciag_skracajacy}, and we may take $c_3=c_{\rm upper}$ in the inequality~\eqref{eq:upper_rho_a}. If $\|\mathbf{x}-\mathbf{y}\| \leq c_2d(\mathbf{x},\mathbf{y})$, then by Proposition~\ref{propo:heat} we get
\begin{align*}
    h_t(\mathbf{x},\mathbf{y}) \leq C_{\rm upper}w(B(\mathbf{x},\sqrt{t}))^{-1}e^{c_{\rm upper}\frac{d(\mathbf{x},\mathbf{y})^2}{t}}=C_{\rm upper}w(B(\mathbf{x},\sqrt{t}))^{-1}e^{-c_{3}\frac{d(\mathbf{x},\mathbf{y})^2}{t}}e^{-(c_{\rm upper}-c_3)\frac{d(\mathbf{x},\mathbf{y})^2}{t}}. 
\end{align*}
Moreover, the assumption $\|\mathbf{x}-\mathbf{y}\| \leq c_2d(\mathbf{x},\mathbf{y})$ implies
\begin{align*}
    e^{-(c_{\rm upper}-c_3)\frac{d(\mathbf{x},\mathbf{y})^2}{t}} \leq e^{-(c_{\rm upper}-c_3)\frac{\|\mathbf{x}-\mathbf{y}\|^2}{c_2^2t}},
\end{align*}
so the claim follows by the fact that there is $C>0$ such that
\begin{equation*}
      e^{-(c_{\rm upper}-c_3)\frac{\|\mathbf{x}-\mathbf{y}\|^2}{c_2^2t}} \leq C\left(1+\frac{\|\mathbf{x}-\mathbf{y}\|}{\sqrt{t}}\right)^{-2|G|} \leq C\rho_{\boldsymbol{\alpha}}(\mathbf{x},\mathbf{y},t),
\end{equation*}
where the second inequality is a consequence of~\eqref{eq:ciag_skracajacy}.
\end{proof}

From now on the constant $C_3, c_3$ from Proposition \ref{propo:small_case} are fixed.

\begin{proof}[Proof of~\eqref{eq:main_claim}]
Let $c_2$ be the constant from Proposition~\ref{propo:d_small_t_small}. Fix $\mathbf{x},\mathbf{y} \in \mathbb{R}^N$ and $t>0$ and consider
\begin{align*}
    G_0:=\{\sigma \in G\;:\;\text{ assumption}~\eqref{eq:assumptions} \text{ is not satisfied for }\mathbf{x},\sigma(\mathbf{y}),t\}.
\end{align*}
Note that $G_0 \neq \emptyset$, because there is $\sigma_0 \in G$ such that $\|\mathbf{x}-\sigma_0(\mathbf{y})\|=d(\mathbf{x},\mathbf{y})=d(\mathbf{x},\sigma_0(\mathbf{y}))$, so the assumption~\eqref{eq:assumptions} is not satisfied for $\mathbf{x}$, $\sigma_0(\mathbf{y})$, $t$. We will prove~\eqref{eq:main_claim} for $h_t(\mathbf{x},\sigma(\mathbf{y}))$ for all $\sigma \in G$. Note that, by the definition of $G_0$, if $\sigma \in G_0$, then by Proposition~\ref{propo:small_case} we have
\begin{equation}\label{eq:main_claim2_g0}
    h_t(\mathbf{x}, \sigma(\mathbf{y})) \leq C_3 w(B(\mathbf{x},\sqrt{t}))^{-1}e^{-c_3\frac{d(\mathbf{x},\mathbf{y})^2}{t}} \sum_{\boldsymbol{\alpha} \in \mathcal{A}(\mathbf{x},\sigma(\mathbf{y}))\;:\;\ell(\boldsymbol{\alpha}) \leq |G|}\rho_{\boldsymbol{\alpha}}(\mathbf{x},\sigma(\mathbf{y}),t).
\end{equation}
If $G=G_0$, the proof is complete. Assume that $G_0\ne G$.  Consider all the values  $h_t(\mathbf{x},\sigma(\mathbf{y}))$ for all $\sigma \not\in G_0$ and list them in a decreasing sequence, that is,   $G\setminus G_0=\{\sigma_1,\sigma_2,\ldots,\sigma_m\}$ and 
\begin{equation}\label{eq:order}
    h_t(\mathbf{x},\sigma_1(\mathbf{y})) \geq h_t(\mathbf{x},\sigma_2(\mathbf{y})) \geq \ldots \geq h_t(\mathbf{x},\sigma_{m}(\mathbf{y})).
\end{equation}
For $1 \leq j \leq m$ let us denote
\begin{equation}
    G_j:=G_0 \cup \{\sigma_1,\ldots,\sigma_j\}. 
\end{equation}
We will prove by induction on $j$ that for all $1 \leq j \leq m$ we have
\begin{equation}\label{eq:main_claim2}
    h_t(\mathbf{x}, \sigma_j(\mathbf{y})) \leq C_3(2C_1|G|)^{j}w(B(\mathbf{x},\sqrt{t}))^{-1}e^{-c_3\frac{d(\mathbf{x},\mathbf{y})^2}{t}} \sum_{\boldsymbol{\alpha} \in \mathcal{A}(\mathbf{x},\sigma_j(\mathbf{y}))\;:\;\ell(\boldsymbol{\alpha}) \leq |G|+j}\rho_{\boldsymbol{\alpha}}(\mathbf{x},\sigma_j(\mathbf{y}),t),
\end{equation}
where $C_3,c_3$ are the constants from Proposition~\ref{propo:small_case}, where $C_1$ is the constant from Proposition \ref{propo:basic}. We have already remarked that \eqref{eq:main_claim2} is satisfied for $\sigma\in G_0$ with $j=0$, (see~\eqref{eq:main_claim2_g0}). Suppose that the estimate~\eqref{eq:main_claim2} holds for all $h_t(\mathbf{x},\sigma(\mathbf{y}))$ where $\sigma \in G_j$. We will prove~\eqref{eq:main_claim2} for $h_t(\mathbf{x},\sigma_{j+1}(\mathbf{y}))$. By the fact that $\sigma_{j+1} \not \in G_0$ and Proposition~\ref{propo:d_small_t_small} we get
\begin{equation}\label{eq:C2_applied}
    h_t(\mathbf{x},\sigma_{j+1}(\mathbf{y})) \leq 2C_1 \left(1+\frac{\|\mathbf{x}-\sigma_{j+1}(\mathbf{y})\|}{\sqrt{t}}\right)^{-2}\sum_{\alpha \in R}h_t(\mathbf{x},\sigma_{\alpha} \circ \sigma_{j+1}(\mathbf{y})).
\end{equation}
Recall that $\sigma_{j+1} \not\in G_0$, so the assumption~\eqref{eq:assumptions} is satisfied for $\mathbf{x},\sigma_{j+1}(\mathbf{y})$, and $t$. Therefore, since $c_2>\max(1,4C_1|G|)$ in Proposition~\ref{propo:d_small_t_small}, we get
\begin{equation}\label{eq:c2_is_large}
    2C_1 \left(1+\frac{\|\mathbf{x}-\sigma_{j+1}(\mathbf{y})\|}{\sqrt{t}}\right)^{-2} <\frac{1}{8C_1|G|^2}< \frac{1}{2|G|}.
\end{equation}
Note that there is $\alpha \in R$ such that $\sigma_{\alpha} \circ \sigma_{j+1} \in G_j$, because otherwise~\eqref{eq:order},~\eqref{eq:C2_applied}, and~\eqref{eq:c2_is_large} lead us to a contradiction. Let $\alpha_0 \in R$ be such that $\sigma_{\alpha_0} \circ \sigma_{j+1} \in G_j$ and $h_t(\mathbf{x},\sigma_{\alpha_0}\circ \sigma_{j+1}(\mathbf{y}))$ is the largest possible, that is,
\begin{align*}
    h_t(\mathbf{x},\sigma_{\alpha_0}\circ \sigma_{j+1}(\mathbf{y}))=\max_{\alpha\in R_+, \; \sigma_{\alpha} \circ \sigma_{j+1} \in G_j} h_t(\mathbf{x},\sigma_{\alpha}\circ \sigma_{j+1}(\mathbf{y})).
\end{align*}

Then by~\eqref{eq:C2_applied} we get
\begin{align*}
    h_t(\mathbf{x},\sigma_{j+1}(\mathbf{y})) \leq 2C_1|G| \left(1+\frac{\|\mathbf{x}-\sigma_{j+1}(\mathbf{y})\|}{\sqrt{t}}\right)^{-2}h_{t}(\mathbf{x},\sigma_{\alpha_0}\circ \sigma_{j+1}(\mathbf{y})).
\end{align*}
By the induction hypothesis ~\eqref{eq:main_claim2} already holds for $h_{t}(\mathbf{x},\sigma_{\alpha_0}\circ \sigma_{j+1}(\mathbf{y}))$. Hence
\begin{align*}
    h_t(\mathbf{x},\sigma_{j+1}(\mathbf{y})) &\leq C_3(2C_1|G|)^{j+1}w(B(\mathbf{x},\sqrt{t}))^{-1}e^{-c_3\frac{d(\mathbf{x},\mathbf{y})^2}{t}}\\&\left(1+\frac{\|\mathbf{x}-\sigma_{j+1}(\mathbf{y})\|}{\sqrt{t}}\right)^{-2} \sum_{\boldsymbol{\alpha} \in \mathcal{A}(\mathbf{x},\sigma_{\alpha_0} \circ \sigma_{j+1}(\mathbf{y}))\;:\;\ell(\boldsymbol{\alpha}) \leq |G|+j}\rho_{\boldsymbol{\alpha}}(\mathbf{x},\sigma_{\alpha_0} \circ \sigma_{j+1}(\mathbf{y}),t)
\end{align*}
For any sequence
\begin{align*}
    \boldsymbol{\alpha}=(\alpha_1,\alpha_2,\ldots,\alpha_m)
\end{align*}
from $\mathcal{A}(\mathbf{x},\sigma_{\alpha_0}\circ \sigma_{j+1}(\mathbf{y}))$ with $\ell(\boldsymbol \alpha)\leq |G|+j$, 
we define the new sequence 
 $$ \widetilde{\boldsymbol{\alpha}}=(\alpha_0,\alpha_1,\alpha_2,\ldots,\alpha_m)
$$
from $\mathcal{A}(\mathbf{x},\sigma_{j+1}(\mathbf{y}))$ 
 satisfying  $ \ell(\widetilde{\boldsymbol{\alpha}})\leq |G|+j+1$.  Moreover, 
 \begin{align*}
 \left(1+\frac{\|\mathbf{x}-\sigma_{j+1}(\mathbf{y})\|}{\sqrt{t}}\right)^{-2} \rho_{\boldsymbol{\alpha}}(\mathbf{x},\sigma_{\alpha_0} \circ \sigma_{j+1}(\mathbf{y}),t)= \rho_{\widetilde{\boldsymbol \alpha} } (\mathbf x,\sigma_{j+1} (\mathbf y),t),
\end{align*}
so~\eqref{eq:main_claim2} for $h_t(\mathbf{x},\sigma_{j+1}(\mathbf{y}))$ is proved.
\end{proof}

\section{Applications of Theorem \texorpdfstring{\ref{teo:1}}{1.1}}

\subsection{Regularity of the heat kernel}
The following theorem can be consider as an improvement of the estimates~\cite[Theorem 4.1 (b)]{ADzH}.

\begin{theorem}\label{teo:holder_est}
Let $m$ be a non-negative integer. There are constants $C_4,c_4>0$ such that for all $\mathbf{x},\mathbf{y},\mathbf{y}' \in \mathbb{R}^N$ and $t>0$ satisfying $\|\mathbf{y}-\mathbf{y}'\|<\frac{\sqrt{t}}{2}$ we have
\begin{equation}\label{eq:h_t_Holder_d}
    |\partial_t^m h_t(\mathbf{x},\mathbf{y})-\partial_t^m h_{t}(\mathbf{x},\mathbf{y}')| \leq C_4t^{-m}\frac{\|\mathbf{y}-\mathbf{y}'\|}{\sqrt{t}}h_{c_4t}(\mathbf{x},\mathbf{y}).
\end{equation}
The constant $c_4$ does not depend on $m$.
\end{theorem}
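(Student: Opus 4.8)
The plan is to work directly from R\"osler's representation~\eqref{heat:Rosler}, differentiate it $m$ times in $t$ under the integral sign, and then estimate the resulting integrand difference by the mean value theorem, exploiting that the measure $\mu_{\mathbf x}$ depends only on $\mathbf x$ and hence is the \emph{same} for $\mathbf y$ and $\mathbf y'$.

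First I would record the shape of $\partial_t^m$ of a Gaussian: for fixed $a\geq 0$,
\begin{equation*}
\partial_t^m\big(t^{-\mathbf N/2}e^{-a/(4t)}\big)=t^{-\mathbf N/2-m}\,Q_m(a/t)\,e^{-a/(4t)},
\end{equation*}
where $Q_m$ is a polynomial of degree $\leq m$ whose coefficients depend only on $m$ and $\mathbf N$ (induction, with $Q_{m+1}(s)=(\tfrac s4-\tfrac{\mathbf N}{2}-m)Q_m(s)-sQ_m'(s)$). Since $\mu_{\mathbf x}$ is a compactly supported probability measure and, for $\eta\in\operatorname{conv}\mathcal{O}(\mathbf x)$, one has $A(\mathbf x,\mathbf z,\eta)^2=\|\mathbf x\|^2-\|\eta\|^2+\|\mathbf z-\eta\|^2\geq 0$ with all $t$-derivatives of the integrand bounded on compact $t$-intervals, differentiation under the integral is justified and gives, for $\mathbf z\in\{\mathbf y,\mathbf y'\}$ with the same $\mu_{\mathbf x}$,
\begin{equation*}
\partial_t^m h_t(\mathbf x,\mathbf z)={\boldsymbol c}_k^{-1}2^{-\mathbf N/2}t^{-\mathbf N/2-m}\int_{\mathbb R^N}F_m\!\Big(\tfrac{A(\mathbf x,\mathbf z,\eta)^2}{t}\Big)\,d\mu_{\mathbf x}(\eta),\qquad F_m(s):=Q_m(s)e^{-s/4},
\end{equation*}
where $|F_m(s)|+|F_m'(s)|\leq C_m e^{-s/8}$ for all $s\geq 0$.

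The heart of the argument is a pointwise estimate on the integrand difference. Subtracting the two representations and applying the mean value theorem to $F_m$ on the segment between $A(\mathbf x,\mathbf y,\eta)^2/t$ and $A(\mathbf x,\mathbf y',\eta)^2/t$, it suffices to control $|A(\mathbf x,\mathbf y,\eta)^2-A(\mathbf x,\mathbf y',\eta)^2|/t$ times $e^{-\min(A(\mathbf x,\mathbf y,\eta)^2,\,A(\mathbf x,\mathbf y',\eta)^2)/(8t)}$. Here I would use two elementary facts. First, $A(\mathbf x,\mathbf y,\eta)^2-A(\mathbf x,\mathbf y',\eta)^2=\langle\mathbf y-\mathbf y',\,\mathbf y+\mathbf y'-2\eta\rangle$, together with $\|\mathbf z-\eta\|\leq A(\mathbf x,\mathbf z,\eta)$ (which follows from~\eqref{eq:A} and $\|\eta\|\leq\|\mathbf x\|$ on $\operatorname{conv}\mathcal{O}(\mathbf x)$), gives $|A(\mathbf x,\mathbf y,\eta)^2-A(\mathbf x,\mathbf y',\eta)^2|\leq\|\mathbf y-\mathbf y'\|\big(A(\mathbf x,\mathbf y,\eta)+A(\mathbf x,\mathbf y',\eta)\big)$. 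Second, using $\|\mathbf y-\mathbf y'\|<\sqrt t/2$, one has $A(\mathbf x,\mathbf y',\eta)^2\leq 2A(\mathbf x,\mathbf y,\eta)^2+2\|\mathbf y-\mathbf y'\|^2<2A(\mathbf x,\mathbf y,\eta)^2+t/2$ and, symmetrically, $A(\mathbf x,\mathbf y',\eta)^2\geq\tfrac12A(\mathbf x,\mathbf y,\eta)^2-t/4$, whence $\min(A(\mathbf x,\mathbf y,\eta)^2,A(\mathbf x,\mathbf y',\eta)^2)\geq\tfrac12A(\mathbf x,\mathbf y,\eta)^2-t/4$ and $A(\mathbf x,\mathbf y,\eta)+A(\mathbf x,\mathbf y',\eta)\lesssim\sqrt t+A(\mathbf x,\mathbf y,\eta)$. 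Absorbing the polynomial factor into the Gaussian (using $(1+s)e^{-s^2/16}\lesssim e^{-s^2/32}$), these combine to
\begin{equation*}
\Big|F_m\!\Big(\tfrac{A(\mathbf x,\mathbf y,\eta)^2}{t}\Big)-F_m\!\Big(\tfrac{A(\mathbf x,\mathbf y',\eta)^2}{t}\Big)\Big|\leq C_m'\,\frac{\|\mathbf y-\mathbf y'\|}{\sqrt t}\,e^{-A(\mathbf x,\mathbf y,\eta)^2/(32t)}.
\end{equation*}

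Finally I would integrate this against $d\mu_{\mathbf x}(\eta)$ and reinsert the prefactor, obtaining
\begin{equation*}
|\partial_t^m h_t(\mathbf x,\mathbf y)-\partial_t^m h_t(\mathbf x,\mathbf y')|\leq C_m'{\boldsymbol c}_k^{-1}2^{-\mathbf N/2}t^{-\mathbf N/2-m}\frac{\|\mathbf y-\mathbf y'\|}{\sqrt t}\int_{\mathbb R^N}e^{-A(\mathbf x,\mathbf y,\eta)^2/(32t)}\,d\mu_{\mathbf x}(\eta).
\end{equation*}
By~\eqref{heat:Rosler} applied at time $8t$, the remaining integral equals ${\boldsymbol c}_k\,2^{\mathbf N/2}(8t)^{\mathbf N/2}h_{8t}(\mathbf x,\mathbf y)$; the powers of $t$ cancel and one is left with $C_4 t^{-m}\tfrac{\|\mathbf y-\mathbf y'\|}{\sqrt t}h_{8t}(\mathbf x,\mathbf y)$ for $C_4=C_m'8^{\mathbf N/2}$ and $c_4=8$. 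In particular $c_4$ is independent of $m$, since $m$ enters only through $Q_m$, and that factor is swallowed by shrinking the Gaussian exponent by a fixed amount. The only genuinely delicate point is organizing the chain of inequalities in the middle step so that the factor $(A(\mathbf x,\mathbf y,\eta)+A(\mathbf x,\mathbf y',\eta))/\sqrt t$ arising from the $\mathbf y$-gradient of $A^2$ is dominated by a Gaussian at a comparable time while extracting exactly one power of $\|\mathbf y-\mathbf y'\|/\sqrt t$; everything else is routine once differentiation under the integral is justified.
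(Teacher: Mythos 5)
Your argument is correct, and it takes a genuinely different route from the paper's at the decisive step. Both proofs start from R\"osler's formula~\eqref{heat:Rosler} and both reduce the problem to a mean-value estimate, but they apply the mean value theorem in different variables. The paper interpolates in the \emph{spatial} variable, writing $\mathbf{y}_s=\mathbf{y}'+s(\mathbf{y}-\mathbf{y}')$ and bounding $\partial_x\partial_t^m\widetilde{h_t}$ by $C_m t^{-m-1/2}\widetilde{h_{2t}}$; this yields the bound $C_m t^{-m}\tfrac{\|\mathbf{y}-\mathbf{y}'\|}{\sqrt t}\int_0^1 h_{2t}(\mathbf{x},\mathbf{y}_s)\,ds$, and one then still has to replace $\mathbf{y}_s$ by $\mathbf{y}$, which is done via Lemma~\ref{lem:small_def} --- a statement of independent interest whose proof relies on the sharp two-sided bounds of Theorem~\ref{teo:1}. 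You instead apply the mean value theorem to the scalar profile $F_m(s)=Q_m(s)e^{-s/4}$ evaluated at $A(\mathbf{x},\mathbf{y},\eta)^2/t$ and $A(\mathbf{x},\mathbf{y}',\eta)^2/t$, and compare the two exponents \emph{pointwise in} $\eta$ using only the structure $A(\mathbf{x},\mathbf{z},\eta)^2=(\|\mathbf{x}\|^2-\|\eta\|^2)+\|\mathbf{z}-\eta\|^2$ with a common nonnegative first term; your inequalities $|A(\mathbf{x},\mathbf{y},\eta)^2-A(\mathbf{x},\mathbf{y}',\eta)^2|\le\|\mathbf{y}-\mathbf{y}'\|(A(\mathbf{x},\mathbf{y},\eta)+A(\mathbf{x},\mathbf{y}',\eta))$ and $A(\mathbf{x},\mathbf{y}',\eta)^2\ge\tfrac12A(\mathbf{x},\mathbf{y},\eta)^2-t/4$ all check out, and the final integral of $e^{-A(\mathbf{x},\mathbf{y},\eta)^2/(32t)}$ is indeed $\boldsymbol{c}_k2^{\mathbf{N}/2}(8t)^{\mathbf{N}/2}h_{8t}(\mathbf{x},\mathbf{y})$, giving $c_4=8$ independently of $m$. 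What each approach buys: yours is entirely elementary and self-contained --- it establishes Theorem~\ref{teo:holder_est} without invoking Theorem~\ref{teo:1} or Lemma~\ref{lem:small_def} at all --- whereas the paper's route deliberately showcases Lemma~\ref{lem:small_def} (the comparison $h_t(\mathbf{x},\mathbf{y})\le C_5h_{c_5t}(\mathbf{x},\mathbf{y}')$ for nearby $\mathbf{y},\mathbf{y}'$) as an application of the main estimates, and that lemma is a reusable tool beyond this one theorem. Your proof shows the paper's remark about the ``novelty'' of using Lemma~\ref{lem:small_def} should be read as a methodological choice rather than a necessity for this particular corollary.
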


In the proof of Theorem~\ref{teo:holder_est} we will need the following lemma.

\begin{lemma}\label{lem:small_def}
  There are constants $C_5,c_5>0$ such that for all $\mathbf{x},\mathbf{y},\mathbf{y}' \in \mathbb{R}^N$ and $t>0$ satisfying $\|\mathbf{y}-\mathbf{y}'\|<\frac{\sqrt{t}}{2}$ we have
  \begin{equation}\label{eq:small_def}
      h_{t}(\mathbf{x},\mathbf{y}) \leq C_5 h_{c_5 t}(\mathbf{x},\mathbf{y}').
  \end{equation}
\end{lemma}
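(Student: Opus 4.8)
The plan is to deduce Lemma~\ref{lem:small_def} directly from the main estimates of Theorem~\ref{teo:1}, exploiting that the assumption $\|\mathbf y-\mathbf y'\|<\sqrt t/2$ makes $\mathbf y$ and $\mathbf y'$ essentially interchangeable for all the quantities that appear in the two-sided bound: the Gaussian factor $e^{-c\,d(\mathbf x,\cdot)^2/t}$, the volume $w(B(\mathbf x,\sqrt t))$, and the function $\Lambda(\mathbf x,\cdot,t)$. Concretely, I would fix $c_u\in(0,1/4)$ and $c_l>1/4$ and write, using \eqref{eq:main_claim} for $h_t(\mathbf x,\mathbf y)$ and \eqref{eq:main_lower} for $h_{c_5t}(\mathbf x,\mathbf y')$,
\begin{align*}
h_t(\mathbf x,\mathbf y)&\le C_u\,w(B(\mathbf x,\sqrt t))^{-1}e^{-c_u d(\mathbf x,\mathbf y)^2/t}\Lambda(\mathbf x,\mathbf y,t),\\
h_{c_5t}(\mathbf x,\mathbf y')&\ge C_l\,w(B(\mathbf x,\sqrt{c_5t}))^{-1}e^{-c_l d(\mathbf x,\mathbf y')^2/(c_5t)}\Lambda(\mathbf x,\mathbf y',c_5t),
\end{align*}
so it suffices to compare the three pairs of factors, uniformly in $\mathbf x,\mathbf y,\mathbf y',t$, after choosing $c_5$ appropriately.

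The first comparison, of the volumes, is immediate from the doubling property \eqref{eq:doubling} (applied once or a bounded number of times depending on whether $c_5\lessgtr1$), since the ball is centered at $\mathbf x$ in both cases and only the radius changes by the fixed factor $\sqrt{c_5}$. For the Gaussian factors, note that $\|\mathbf y-\mathbf y'\|<\sqrt t/2$ gives $d(\mathbf x,\mathbf y')\le d(\mathbf x,\mathbf y)+\|\mathbf y-\mathbf y'\| < d(\mathbf x,\mathbf y)+\sqrt t/2$ (and symmetrically), because $d(\mathbf x,\cdot)$ is $1$-Lipschitz — indeed $d(\mathbf x,\mathbf y')=\|\mathbf x-\sigma(\mathbf y')\|\le\|\mathbf x-\sigma(\mathbf y)\|+\|\sigma(\mathbf y)-\sigma(\mathbf y')\|$ and the $\sigma$ are isometries. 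Hence $d(\mathbf x,\mathbf y')^2\le 2d(\mathbf x,\mathbf y)^2+t/2$, so $e^{-c_l d(\mathbf x,\mathbf y')^2/(c_5t)}\ge e^{-2c_l d(\mathbf x,\mathbf y)^2/(c_5t)}e^{-c_l/(2c_5)}$; choosing $c_5>2c_l/c_u$ ensures $2c_l/c_5<c_u$, so $e^{-c_u d(\mathbf x,\mathbf y)^2/t}\le e^{-2c_l d(\mathbf x,\mathbf y)^2/(c_5t)}$ and the exponential factor of $h_t(\mathbf x,\mathbf y)$ is controlled by that of $h_{c_5t}(\mathbf x,\mathbf y')$ up to the harmless constant $e^{-c_l/(2c_5)}$.

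The only genuinely delicate point is the comparison of $\Lambda(\mathbf x,\mathbf y,t)$ with $\Lambda(\mathbf x,\mathbf y',c_5t)$. Using \eqref{eq:Lambda_2t} I may first replace $\Lambda(\mathbf x,\mathbf y',c_5t)$ by $\Lambda(\mathbf x,\mathbf y',t)$ at the cost of a power of $\max(c_5,1)$, reducing to comparing $\Lambda(\mathbf x,\mathbf y,t)$ and $\Lambda(\mathbf x,\mathbf y',t)$ at the same time $t$. The plan here is to show that any admissible sequence $\boldsymbol\alpha\in\mathcal A(\mathbf x,\mathbf y)$ with $\ell(\boldsymbol\alpha)\le 2|G|$ is also admissible for $(\mathbf x,\mathbf y')$ and that $\rho_{\boldsymbol\alpha}(\mathbf x,\mathbf y,t)\le C\,\rho_{\boldsymbol\alpha}(\mathbf x,\mathbf y',t)$. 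The admissibility claim needs care: the Weyl chamber of $\mathbf x$ is a closed set, $\sigma_{\boldsymbol\alpha}(\mathbf y)$ lies in it, and $\|\sigma_{\boldsymbol\alpha}(\mathbf y)-\sigma_{\boldsymbol\alpha}(\mathbf y')\|=\|\mathbf y-\mathbf y'\|<\sqrt t/2$ is small but not zero, so $\sigma_{\boldsymbol\alpha}(\mathbf y')$ need not stay in that exact chamber — however $n(\mathbf x,\sigma_{\boldsymbol\alpha}(\mathbf y'))$ is still small, and one can prepend at most $|G|$ extra reflections (as in Corollary~\ref{coro:ciag_skracajacy}) to land back in the chamber of $\mathbf x$, keeping the total length $\le 2|G|$ provided we were working with the sharper bound $\ell\le|G|$ coming from Proposition~\ref{propo:small_case}; alternatively one simply uses $\Lambda(\mathbf x,\mathbf y',t)\ge\rho_{\emptyset\text{-type minimal chain}}$ together with the fact that the minimal chain for $(\mathbf x,\mathbf y)$ is, up to lengths comparable to $|G|$, a chain for $(\mathbf x,\mathbf y')$. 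The factor comparison $\rho_{\boldsymbol\alpha}(\mathbf x,\mathbf y,t)\asymp\rho_{\boldsymbol\alpha}(\mathbf x,\mathbf y',t)$ is then routine: each factor is of the form $(1+\|\mathbf x-\tau(\mathbf y)\|/\sqrt t)^{-2}$ with $\tau=\sigma_{\alpha_{j-1}}\circ\cdots\circ\sigma_{\alpha_1}$, and $\bigl|\,\|\mathbf x-\tau(\mathbf y)\|-\|\mathbf x-\tau(\mathbf y')\|\,\bigr|\le\|\mathbf y-\mathbf y'\|<\sqrt t/2$, which implies $1+\|\mathbf x-\tau(\mathbf y)\|/\sqrt t\le 2\bigl(1+\|\mathbf x-\tau(\mathbf y')\|/\sqrt t\bigr)$ and vice versa, so each factor changes by a factor in $[1/4,4]$ and the product of at most $2|G|$ of them by a factor in $[4^{-2|G|},4^{2|G|}]$. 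Putting the three comparisons together and absorbing everything into $C_5$ and $c_5$ finishes the proof; the main obstacle, as flagged, is the bookkeeping needed to guarantee that admissible sequences for $(\mathbf x,\mathbf y)$ of bounded length give rise to admissible sequences for $(\mathbf x,\mathbf y')$ of bounded length, which is where the uniform bound $\ell(\boldsymbol\alpha)\le 2|G|$ in the definition of $\Lambda$ (rather than a tighter one) is used.
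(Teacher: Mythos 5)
Your overall skeleton matches the paper's: bound $h_t(\mathbf x,\mathbf y)$ from above via Theorem~\ref{teo:1}, bound the kernel at $\mathbf y'$ from below, and compare the volume, the Gaussian, and the $\Lambda$ factors. The volume and Gaussian comparisons are correct, and your term-by-term estimate $\rho_{\boldsymbol\alpha}(\mathbf x,\mathbf y,t)\le 4^{\ell(\boldsymbol\alpha)}\rho_{\boldsymbol\alpha}(\mathbf x,\mathbf y',t)$ is exactly the paper's \eqref{eq:replace_y_by_y'}. The gap is in the admissibility step, which you flag but do not close, and it hides two concrete difficulties. First, the length bookkeeping: the sequences indexing $\Lambda(\mathbf x,\mathbf y,t)$ have length up to $2|G|$ (the proof of \eqref{eq:main_claim} genuinely produces lengths up to $2|G|$; the bound $\ell\le|G|$ from Proposition~\ref{propo:small_case} covers only the base case $\sigma\in G_0$, so the ``sharper bound'' you invoke is not available). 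Appending a shortening chain from Corollary~\ref{coro:ciag_skracajacy} therefore lands you at length up to $3|G|$ or $4|G|$, and the resulting $\rho_{\widetilde{\boldsymbol\alpha}}(\mathbf x,\mathbf y',t)$ is \emph{not} a summand of $\Lambda(\mathbf x,\mathbf y',t)$, so the lower bound of Theorem~\ref{teo:1} does not apply. The paper circumvents this by invoking Proposition~\ref{propo:lower_one}, which lower-bounds $h_t(\mathbf x,\mathbf y')$ by a multiple of $w(B(\mathbf x,\sqrt t))^{-1}e^{-c_{\rm lower}d(\mathbf x,\mathbf y')^2/t}\rho_{\widetilde{\boldsymbol\alpha}}(\mathbf x,\mathbf y',t)$ for \emph{every} admissible sequence regardless of length, and then sums over the finitely many sequences with $\ell\le 4|G|$.

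Second, and more seriously: the appended reflections contribute \emph{new} factors to $\rho$, and these are not within an absolute constant of $1$. Your ``each factor changes by a factor in $[1/4,4]$'' applies only to the factors already present in $\boldsymbol\alpha$ when swapping $\mathbf y$ for $\mathbf y'$. Each appended factor equals $\bigl(1+\|\mathbf x-\tau(\mathbf y')\|/\sqrt t\bigr)^{-2}$ with $\|\mathbf x-\tau(\mathbf y')\|\le\|\mathbf x-\sigma_{\boldsymbol\alpha}(\mathbf y')\|\le d(\mathbf x,\mathbf y)+\sqrt t/2$ by the monotonicity \eqref{eq:ciag_skracajacy}, hence is only bounded below by a multiple of $\bigl(1+d(\mathbf x,\mathbf y)/\sqrt t\bigr)^{-2}$. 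Appending up to $|G|$ of them costs an unbounded factor $\bigl(1+d(\mathbf x,\mathbf y)/\sqrt t\bigr)^{2|G|}$, which must be absorbed into the Gaussian $e^{-c_u d(\mathbf x,\mathbf y)^2/t}$ at the price of slightly decreasing the exponent; this is precisely \eqref{eq:two_seq_compare}--\eqref{eq:holder_lemma_final} in the paper and is the crux of the lemma. Your proposal omits this absorption entirely, and your alternative sketch (``the minimal chain for $(\mathbf x,\mathbf y)$ is, up to lengths comparable to $|G|$, a chain for $(\mathbf x,\mathbf y')$'') runs into the same quantitative issue.
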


\begin{proof}
Fix $\mathbf{x},\mathbf{y},\mathbf{y}' \in \mathbb{R}^{N}$ and $t>0$ such that $\|\mathbf{y}-\mathbf{y}'\| < \frac{\sqrt{t}}{2}$. By Theorem~\ref{teo:1} we have
\begin{equation}\label{eq:upper_app}
    h_t(\mathbf{x},\mathbf{y}) \leq C_{u}w(B(\mathbf{x},\sqrt{t}))^{-1}e^{-c_{ u}\frac{d(\mathbf{x},\mathbf{y})^2}{t}} \Lambda(\mathbf{x},\mathbf{y},t),
\end{equation}
where
\begin{equation}\label{eq:Lambda_app}
    \Lambda (\mathbf x,\mathbf y,t)=\sum_{\boldsymbol \alpha \in \mathcal{A}(\mathbf{x},\mathbf{y}), \;\ell (\boldsymbol \alpha) \leq 2|G|} \rho_{\boldsymbol \alpha}(\mathbf x,\mathbf y,t).
\end{equation}
Let us consider a sequence $\boldsymbol{\alpha} \in \mathcal{A}(\mathbf{x},\mathbf{y})$ such that $\ell(\boldsymbol{\alpha}) \leq 2|G|$. We shall prove that 
\begin{equation}\label{eq:replace_y_by_y'}
    \rho_{\boldsymbol{\alpha}}(\mathbf{x},\mathbf{y},t) \leq 2^{2\ell(\boldsymbol{\alpha})}\rho_{\boldsymbol{\alpha}}(\mathbf{x},\mathbf{y}',t).
\end{equation}
If $\ell(\boldsymbol{\alpha})=0$, then \eqref{eq:replace_y_by_y'} is trivial. 
 If $\boldsymbol{\alpha}=(\alpha_1,\alpha_2,\ldots,\alpha_m) \in \mathcal{A}(\mathbf{x},\mathbf{y})$ and  $\|\mathbf{y}-\mathbf{y}'\|<\frac{\sqrt{t}}{2}$ , then for any $1 \leq j \leq \ell(\boldsymbol{\alpha})$, we have
\begin{align*}
    &\frac{\|\mathbf{x}-\sigma_{\alpha_j}\circ \sigma_{\alpha_{j-1}} \circ \ldots \circ \sigma_{\alpha_1}(\mathbf{y}')\|}{\sqrt{t}}+1 \\&\leq \frac{\|\mathbf{x}-\sigma_{\alpha_j}\circ \sigma_{\alpha_{j-1}} \circ \ldots \circ \sigma_{\alpha_1}(\mathbf{y})\|+\|\sigma_{\alpha_j}\circ \sigma_{\alpha_{j-1}} \circ \ldots \circ \sigma_{\alpha_1}(\mathbf{y})-\sigma_{\alpha_j}\circ \sigma_{\alpha_{j-1}} \circ \ldots \circ \sigma_{\alpha_1}(\mathbf{y}')\|}{\sqrt{t}}+1\\&=\frac{\|\mathbf{x}-\sigma_{\alpha_j}\circ \sigma_{\alpha_{j-1}} \circ \ldots \circ \sigma_{\alpha_1}(\mathbf{y})\|+\|\mathbf{y}-\mathbf{y}'\|}{\sqrt{t}}+1 \leq 2 \left(\frac{\|\mathbf{x}-\sigma_{\alpha_j}\circ \sigma_{\alpha_{j-1}} \circ \ldots \circ \sigma_{\alpha_1}(\mathbf{y})\|}{\sqrt{t}}+1\right).
\end{align*}
Hence, by the definition of $\rho_{\boldsymbol{\alpha}}(\mathbf{x},\mathbf{y},t)$ (see~\eqref{eq:rho}), we obtain \eqref{eq:replace_y_by_y'}.

Now, for $\boldsymbol{\alpha}\in \mathcal A(\mathbf x,\mathbf y)$, $ \ell(\boldsymbol{\alpha})\leq 2|G|$,  we are going to define a new sequence  $\widetilde{\boldsymbol{\alpha}}$ of elements of $R$ such that $\widetilde{\boldsymbol{\alpha}} \in \mathcal{A}(\mathbf{x},\mathbf{y}')$, $\ell(\widetilde{\boldsymbol{\alpha}}) \leq 4|G|$, and
\begin{equation}\label{eq:new_rho}
    \rho_{\boldsymbol{\alpha}}(\mathbf{x},\mathbf{y}',t) \leq 2^{2|G|}\left(1+\frac{d(\mathbf{x},\mathbf{y})}{\sqrt{t}}\right)^{2|G|} \rho_{\widetilde{\boldsymbol{\alpha}}}(\mathbf{x},\mathbf{y}',t).
\end{equation}
To this end, let us consider two cases.

\textbf{Case 1.} $\boldsymbol{\alpha} \in \mathcal{A}(\mathbf{x},\mathbf{y}')$. Then we set $\widetilde{\boldsymbol{\alpha}}:=\boldsymbol{\alpha}$. Clearly, in this case~\eqref{eq:new_rho} is satisfied.

\textbf{Case 2.} $\boldsymbol{\alpha} \not\in \mathcal{A}(\mathbf{x},\mathbf{y}')$. Let $\boldsymbol{\beta}=(\beta_1,\beta_2,\ldots,\beta_{m_1})$ be a sequence from Corollary~\ref{coro:ciag_skracajacy} chosen for the points $\mathbf{x}$ and $\sigma_{\boldsymbol{\alpha}}(\mathbf{y}')$. In particular, by~\eqref{eq:ciag_skracajacy}, then by the fact that $\boldsymbol{\alpha} \in \mathcal{A}(\mathbf{x},\mathbf{y})$ and $\|\mathbf{y}-\mathbf{y}'\| \leq \frac{\sqrt{t}}{2}$, for any $1 \leq j \leq \ell(\boldsymbol{\beta})$, we have
\begin{equation*}\begin{split}
    \|\mathbf{x}-\sigma_{\beta_j} \circ \sigma_{\beta_{j-1}} \circ \ldots \circ \sigma_{\beta_1} \circ \sigma_{\boldsymbol{\alpha}}(\mathbf{y}')\| &\leq \|\mathbf{x}-\sigma_{\boldsymbol{\alpha}}(\mathbf{y}')\| \\&\leq \|\mathbf{x}-\sigma_{\boldsymbol{\alpha}}(\mathbf{y})\|+\|\sigma_{\boldsymbol{\alpha}}(\mathbf{y})-\sigma_{\boldsymbol{\alpha}}(\mathbf{y}')\|\\&=\|\mathbf{x}-\sigma_{\boldsymbol{\alpha}}(\mathbf{y})\|+\|\mathbf{y}-\mathbf{y}'\|\\& \leq  d(\mathbf{x},\mathbf{y})+\frac{\sqrt{t}}{2}.
    \end{split}
\end{equation*}
Consequently,
\begin{equation}\label{eq:two_seq_compare}
    1 \geq \rho_{\boldsymbol{\beta}}(\mathbf{x},\sigma_{\boldsymbol{\alpha}}(\mathbf{y}'),t) \geq 2^{-2|G|} \Big(1+\frac{d(\mathbf x,\mathbf y)}{\sqrt{t} } \Big)^{-2|G|}.
\end{equation}
We set 
\begin{align*}
 \widetilde{\boldsymbol{\alpha}}:=(\beta_1,\beta_2,\ldots,\beta_{m_1})  \quad &\text{if } \ell(\boldsymbol{\alpha})=0,\\
  \widetilde{\boldsymbol{\alpha}}:=(\alpha_1,\alpha_2,\ldots,\alpha_m,\beta_1,\beta_2,\ldots,\beta_{m_1}) \quad  &\text{if } {\boldsymbol{\alpha}}=(\alpha_1,\alpha_2,\ldots,\alpha_m).
\end{align*} 
Then, by the definition of $\boldsymbol{\beta}$, we have $\widetilde{\boldsymbol{\alpha}} \in \mathcal{A}(\mathbf{x},\mathbf{y}')$, $\ell(\widetilde{\boldsymbol{\alpha}}) \leq 4|G|$. Moreover, by the definition of $\rho_{\boldsymbol{\alpha}}(\mathbf{x},\mathbf{y}',t)$ and $\rho_{\widetilde{\boldsymbol{\alpha}}}(\mathbf{x},\mathbf{y}',t)$, and~\eqref{eq:two_seq_compare} we have
\begin{equation*}
    \rho_{\widetilde{\boldsymbol{\alpha}}}(\mathbf{x},\mathbf{y}',t)=\rho_{\boldsymbol{\alpha}}(\mathbf{x},\mathbf{y}',t)\rho_{\boldsymbol{\beta}}(\mathbf{x},\sigma_{\boldsymbol{\alpha}}(\mathbf{y}'),t) \geq 2^{-2|G|} \Big(1+\frac{d(\mathbf x,\mathbf y)}{\sqrt{t} }\Big)^{-2|G|}\rho_{\boldsymbol{\alpha}}(\mathbf{x},\mathbf{y}',t),
\end{equation*}
which implies~\eqref{eq:new_rho}. 

Applying~\eqref{eq:replace_y_by_y'} and~\eqref{eq:new_rho}, we have 
\begin{align*}
    \Lambda(\mathbf{x},\mathbf{y},t)=\sum_{\boldsymbol \alpha \in \mathcal{A}(\mathbf{x},\mathbf{y}), \;\ell (\boldsymbol \alpha) \leq 2|G|} \rho_{\boldsymbol \alpha}(\mathbf x,\mathbf y,t) \leq 2^{6|G|}\Big(1+\frac{d(\mathbf x,\mathbf y)}{\sqrt{t} }\Big)^{2|G|}\sum_{\boldsymbol \alpha \in \mathcal{A}(\mathbf{x},\mathbf{y}'), \;\ell (\boldsymbol \alpha) \leq 4|G|} \rho_{\boldsymbol \alpha}(\mathbf x,\mathbf y',t),
\end{align*}
which, together with~\eqref{eq:upper_app}, gives 
\begin{equation}\label{eq:holder_lemma_final}
\begin{split}
    h_t(\mathbf{x},\mathbf{y}) &\leq 2^{6|G|}C_u \left(1+\frac{d(\mathbf{x},\mathbf{y})}{\sqrt{t}}\right)^{2|G|}w(B(\mathbf{x},\sqrt{t}))^{-1}e^{-c_u\frac{d(\mathbf{x},\mathbf{y})^2}{t}}\sum_{\boldsymbol \alpha \in \mathcal{A}(\mathbf{x},\mathbf{y}'), \;\ell (\boldsymbol \alpha) \leq 4|G|} \rho_{\boldsymbol \alpha}(\mathbf x,\mathbf y',t)\\&\leq C_u' w(B(\mathbf{x},\sqrt{t}))^{-1}e^{-c_u'\frac{d(\mathbf{x},\mathbf{y})^2}{t}}\sum_{\boldsymbol \alpha \in \mathcal{A}(\mathbf{x},\mathbf{y}'), \;\ell (\boldsymbol \alpha) \leq 4|G|} \rho_{\boldsymbol \alpha}(\mathbf x,\mathbf y',t).
    \end{split}
\end{equation}
Note that $\|\mathbf{y}-\mathbf{y}'\| <\frac{\sqrt{t}}{2}$ implies
\begin{align*}
    2d(\mathbf{x},\mathbf{y})^2 \geq d(\mathbf{x},\mathbf{y}')^2-2\|\mathbf{y}-\mathbf{y}'\|^2 \geq d(\mathbf{x},\mathbf{y}')^2-\frac{t}{2}.
\end{align*}
Therefore, by~\eqref{eq:holder_lemma_final} we get
\begin{equation}\label{eq:ff}
    h_t(\mathbf{x},\mathbf{y}) \leq C_u'' w(B(\mathbf{x},\sqrt{t}))^{-1}e^{-c_u'\frac{d(\mathbf{x},\mathbf{y}')^2}{2t}}\sum_{\boldsymbol \alpha \in \mathcal{A}(\mathbf{x},\mathbf{y}'), \;\ell (\boldsymbol \alpha) \leq 4|G|} \rho_{\boldsymbol \alpha}(\mathbf x,\mathbf y',t).
\end{equation}
Finally,~\eqref{eq:small_def} is a consequence of Proposition~\ref{propo:lower_one},~\eqref{eq:ff}, and~\eqref{eq:Lambda_2t}.
\end{proof}

\begin{proof}[Proof of Theorem \texorpdfstring{\ref{teo:holder_est}}{??}]
For $x \in \mathbb{R}$ and $t>0$ let us denote
\begin{align*}
    \widetilde{h_t}(x):={\boldsymbol c}_k^{-1}2^{-\mathbf{N}/2}t^{-\mathbf{N}/2}\exp\left(-\frac{x^2}{4t}\right).
\end{align*}
Then the formula~\eqref{heat:Rosler} reads
\begin{equation}\label{eq:modified_Roesler}
    h_t(\mathbf{x},\mathbf{y})=\int_{\mathbb{R}^N}\widetilde{h_t}(A(\mathbf{x},\mathbf{y},\eta))\, d\mu_{\mathbf{x}}(\eta).
\end{equation}
Observe that \hspace{.5mm}$\tilde{\mathfrak{h}}_t(x)\hspace{-.5mm}:=\hspace{-.25mm}\partial_x\hspace{.25mm}\partial_t^m\widetilde{h_t}(x)$ is equal to \hspace{.5mm}$\frac x{t^{m+1}} \tilde h_t(x)$ times a polynomial in $\frac{x^2}t$. Hence, for any non-negative integer $m$ there is a constant $C_m>0$ such that for all $x \in \mathbb{R}$ and $t>0$ we have
\begin{equation}\label{eq:C_m_der}
\bigl|\hspace{.25mm}{\tilde{\mathfrak{h}}_t}(x)\bigr|\leq C_m\,t^{-m-1\slash 2}\,\widetilde{h_{2t}}(x)\,.
\end{equation}
Further, by~\eqref{eq:modified_Roesler} and~\eqref{eq:C_m_der} we get
\begin{equation}\label{eq:comp}\begin{split}
|{\partial_t^mh_t}(\mathbf{x},\mathbf{y})-{\partial_t^mh_t}(\mathbf{x},\mathbf{y}')|
&=\Bigl|\int_{\mathbb{R}^N}\bigl\{{\partial_t^m\widetilde{h_t}}(A(\mathbf{x},\mathbf{y},\eta))-{\partial_t^m\widetilde{h_t}}(A(\mathbf{x},\mathbf{y}',\eta))\bigr\}d\mu_{\mathbf{x}}(\eta)\Bigr|\\
& =\Bigl|\int_{\mathbb{R}^N}\int_0^1{\frac{\partial}{\partial s}\partial_t^m\widetilde{h_t}}(A(\mathbf{x},{\underbrace{\mathbf{y}'\hspace{-1mm}+\hspace{-.5mm}s(\mathbf{y}\!-\hspace{-.5mm}\mathbf{y}')}_{\mathbf{y}_{\hspace{-.25mm}s}}},\eta))\,ds\,d\mu_{\mathbf{x}}(\eta)\Bigr|\\
&\le{\|}\mathbf{y}\!-\hspace{-.5mm}\mathbf{y}'{\|}\int_0^1\!
 \int_{\mathbb{R}^N}\,\bigl|\hspace{.25mm}{\tilde{\mathfrak{h}}_t}(A(\mathbf{x},\mathbf{y}_{\hspace{-.25mm}s},\eta))
 \bigr| \,d\mu_{\mathbf{x}}(\eta) \,ds\\
&\le{C_m\,t^{-m}}\,\frac{{\|}\mathbf{y}\!-\hspace{-.5mm}\mathbf{y}'{\|}}{\sqrt{t\,}}
 \int_0^1h_{2t}(\mathbf{x},\mathbf{y}_{\hspace{-.25mm}s})\,ds.
\end{split}\end{equation}
Finally, note that for any $s \in [0,1]$, we have
\begin{align*}
    \|\mathbf{y}-\mathbf{y}_s\| \leq \|\mathbf{y}-\mathbf{y}'\| <\frac{\sqrt{t}}{2},
\end{align*}
so, by Lemma~\ref{lem:small_def}, we get
\begin{align}\label{eq:y_and_y_s}
    h_{2t}(\mathbf{x},\mathbf{y}_{\hspace{-.25mm}s}) \leq C_5 h_{2c_5t}(\mathbf{x},\mathbf{y}).
\end{align}
Now~\eqref{eq:comp} together with~\eqref{eq:y_and_y_s} imply the desired estimate~\eqref{eq:h_t_Holder_d}. 
\end{proof}

\begin{remark} \normalfont
In the proof of Theorem~\ref{teo:holder_est}, we partially repeat the argument from~\cite[Theorem 4.1 (b)]{ADzH}. The novelty of the approach is using Lemma~\ref{lem:small_def} instead of Proposition~\ref{propo:heat} in estimating the last integral of~\eqref{eq:comp}.
\end{remark}

\subsection{Remark on a theorem of Gallardo and Rejeb} {
In \cite{GallardoRejeb}, the authors proved that the points $\sigma(\mathbf x)$, $\sigma \in G$, belong to the support of the measure $\mu_{\mathbf x}$ (see~\cite[Theorem A 3)]{GallardoRejeb}). 
{Below, as an application of the estimates~\eqref{eq:main_lower} and~\eqref{eq:main_claim}, we provide another proof of this theorem. The proof, at the same time, gives a more precise behavior of the measure  $\mu_{\mathbf x}$ around these points.}

For $\mathbf{y} \in \mathbb{R}^N$ and $t>0$ we set
\begin{equation}\label{eq:U}
    U(\mathbf y,t):=\{ \eta\in \text{\rm conv}\, \mathcal O(\mathbf y): \| \mathbf y\|^2-\langle\mathbf y,\eta\rangle \leq t  \},
\end{equation}
\begin{equation}\label{eq:V}
    V(\mathbf y,t):=(\text{\rm conv}\, \mathcal O(\mathbf y)) \setminus U(\mathbf{y},t)=\{ \eta\in \text{\rm conv}\, \mathcal O(\mathbf y): \| \mathbf y\|^2-\langle \mathbf y, \eta \rangle > t  \}.
\end{equation}

\begin{theorem}\label{teo:rejeb}  
There is a constant $C_6>0$ such that for all $\mathbf{x} \in \mathbb{R}^N$, $t>0$, and $\sigma \in G$ we have 
\begin{equation}\label{eq:precise_measure}
    C^{-1}_6\frac{t^{\mathbf N/2}\Lambda(\mathbf x,\sigma(\mathbf x),t)}{w(B(\mathbf x,\sqrt{t}))}\leq  \mu_{\mathbf x}(U(\sigma(\mathbf x),t))\leq C_6\frac{t^{\mathbf N/2}\Lambda(\mathbf x,\sigma(\mathbf x),t)}{w(B(\mathbf x,\sqrt{t}))}.
\end{equation}

\end{theorem}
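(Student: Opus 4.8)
The plan is to transfer everything to R\"osler's formula \eqref{heat:Rosler} and to exploit that $\sigma(\mathbf x)$ lies in the $G$-orbit of $\mathbf x$. First, since $d(\mathbf x,\sigma(\mathbf x))=d(\mathbf x,\mathbf x)=0$, Theorem~\ref{teo:1} applied to the pair $(\mathbf x,\sigma(\mathbf x))$ reads, with no exponential factor,
\[
C_{l}\,w(B(\mathbf x,\sqrt t))^{-1}\Lambda(\mathbf x,\sigma(\mathbf x),t)\ \le\ h_t(\mathbf x,\sigma(\mathbf x))\ \le\ C_{u}\,w(B(\mathbf x,\sqrt t))^{-1}\Lambda(\mathbf x,\sigma(\mathbf x),t),
\]
and the same holds with $t$ replaced by $\varepsilon t$ for any $\varepsilon>0$. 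Second, by \eqref{eq:A} with $\mathbf y=\sigma(\mathbf x)$ one has $A(\mathbf x,\sigma(\mathbf x),\eta)^2=2\bigl(\|\sigma(\mathbf x)\|^2-\langle\sigma(\mathbf x),\eta\rangle\bigr)$, so that $\{\eta:A(\mathbf x,\sigma(\mathbf x),\eta)^2\le 2t\}\cap\operatorname{conv}\mathcal O(\mathbf x)=U(\sigma(\mathbf x),t)$; since $\mu_{\mathbf x}$ is supported in $\operatorname{conv}\mathcal O(\mathbf x)=\operatorname{conv}\mathcal O(\sigma(\mathbf x))$, we get $\mu_{\mathbf x}(U(\sigma(\mathbf x),t))=\mu_{\mathbf x}(\{A(\mathbf x,\sigma(\mathbf x),\cdot)^2\le 2t\})$. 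The bridge between the measure and the kernel is the identity $\int_{\mathbb R^N}e^{-A(\mathbf x,\sigma(\mathbf x),\eta)^2/(4s)}\,d\mu_{\mathbf x}(\eta)={\boldsymbol c}_k\,2^{\mathbf N/2}s^{\mathbf N/2}h_s(\mathbf x,\sigma(\mathbf x))$, valid for every $s>0$.

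The upper bound in \eqref{eq:precise_measure} is then immediate: on $U(\sigma(\mathbf x),t)$ the integrand $e^{-A^2/(4t)}$ is $\ge e^{-1/2}$, hence $\mu_{\mathbf x}(U(\sigma(\mathbf x),t))\le e^{1/2}{\boldsymbol c}_k 2^{\mathbf N/2}t^{\mathbf N/2}h_t(\mathbf x,\sigma(\mathbf x))$, and the upper estimate of Theorem~\ref{teo:1} gives the claim.

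For the lower bound the only non-routine step is to produce an \emph{absolute} constant $\varepsilon_0\in(0,1)$, depending only on $\mathbf N$, $|G|$, ${\boldsymbol c}_k$, $C_u$, $C_l$, such that for all $\mathbf x,\sigma,t$
\[
\int_{V(\sigma(\mathbf x),t)}e^{-A(\mathbf x,\sigma(\mathbf x),\eta)^2/(4\varepsilon_0 t)}\,d\mu_{\mathbf x}(\eta)\ \le\ \tfrac12\int_{\mathbb R^N}e^{-A(\mathbf x,\sigma(\mathbf x),\eta)^2/(4\varepsilon_0 t)}\,d\mu_{\mathbf x}(\eta).
\]
To get this, write $e^{-A^2/(4\varepsilon t)}=e^{-A^2/(8\varepsilon t)}\cdot e^{-A^2/(8\varepsilon t)}$; on $V(\sigma(\mathbf x),t)=\{A^2>2t\}$ the first factor is $<e^{-1/(4\varepsilon)}$, so the left-hand side is at most $e^{-1/(4\varepsilon)}\int_{\mathbb R^N}e^{-A^2/(8\varepsilon t)}\,d\mu_{\mathbf x}=e^{-1/(4\varepsilon)}{\boldsymbol c}_k 2^{\mathbf N/2}(2\varepsilon t)^{\mathbf N/2}h_{2\varepsilon t}(\mathbf x,\sigma(\mathbf x))$, while the full integral on the right is ${\boldsymbol c}_k 2^{\mathbf N/2}(\varepsilon t)^{\mathbf N/2}h_{\varepsilon t}(\mathbf x,\sigma(\mathbf x))$. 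Their quotient is therefore $\le 2^{\mathbf N/2}e^{-1/(4\varepsilon)}\,h_{2\varepsilon t}(\mathbf x,\sigma(\mathbf x))/h_{\varepsilon t}(\mathbf x,\sigma(\mathbf x))$, and by the two-sided bound displayed above together with $w(B(\mathbf x,\sqrt{2\varepsilon t}))\ge w(B(\mathbf x,\sqrt{\varepsilon t}))$ and $\Lambda(\mathbf x,\sigma(\mathbf x),2\varepsilon t)\le 2^{2|G|}\Lambda(\mathbf x,\sigma(\mathbf x),\varepsilon t)$ from \eqref{eq:Lambda_2t}, this quotient is $\le 2^{\mathbf N/2+2|G|}(C_u/C_l)\,e^{-1/(4\varepsilon)}$. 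Since $e^{-1/(4\varepsilon)}\to0$ as $\varepsilon\to0^+$, we may fix $\varepsilon_0$ so small that this is $\le\tfrac12$, and the choice does not depend on $\mathbf x,\sigma,t$. \textbf{This uniformity is the main obstacle:} one must check that the ratio $h_{2\varepsilon t}/h_{\varepsilon t}$ is controlled by a constant independent of all parameters, which is exactly what makes the vanishing factor $e^{-1/(4\varepsilon)}$ usable.

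With $\varepsilon_0$ fixed, decompose $\int_{\mathbb R^N}e^{-A^2/(4\varepsilon_0 t)}\,d\mu_{\mathbf x}=\int_{U(\sigma(\mathbf x),t)}+\int_{V(\sigma(\mathbf x),t)}$; bounding the integrand by $1$ on the first set and using the displayed inequality on the second gives $\mu_{\mathbf x}(U(\sigma(\mathbf x),t))\ge\tfrac12\int_{\mathbb R^N}e^{-A^2/(4\varepsilon_0 t)}\,d\mu_{\mathbf x}=\tfrac12{\boldsymbol c}_k 2^{\mathbf N/2}(\varepsilon_0 t)^{\mathbf N/2}h_{\varepsilon_0 t}(\mathbf x,\sigma(\mathbf x))$. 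Applying the lower estimate of Theorem~\ref{teo:1} at time $\varepsilon_0 t$, then \eqref{eq:balls_asymp} to pass from $(\varepsilon_0 t)^{\mathbf N/2}/w(B(\mathbf x,\sqrt{\varepsilon_0 t}))$ to a fixed multiple of $t^{\mathbf N/2}/w(B(\mathbf x,\sqrt t))$, and \eqref{eq:Lambda_2t} to pass from $\Lambda(\mathbf x,\sigma(\mathbf x),\varepsilon_0 t)$ to a fixed multiple of $\Lambda(\mathbf x,\sigma(\mathbf x),t)$, all the remaining powers of $\varepsilon_0$ are now fixed constants, and we arrive at $\mu_{\mathbf x}(U(\sigma(\mathbf x),t))\ge C_6^{-1}t^{\mathbf N/2}\Lambda(\mathbf x,\sigma(\mathbf x),t)/w(B(\mathbf x,\sqrt t))$, completing the proof.
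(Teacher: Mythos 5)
Your proof is correct and follows essentially the same route as the paper: the upper bound via $e^{-A^2/4t}\ge e^{-1/2}$ on $U$ together with~\eqref{eq:main_claim}, and the lower bound by absorbing the tail over $V$ through the comparability of the heat kernel at two comparable time scales on the orbit, which is exactly what Theorem~\ref{teo:1}, \eqref{eq:Lambda_2t}, and the doubling property~\eqref{eq:doubling} provide. The only (dual, cosmetic) difference is that you keep the set $U(\sigma(\mathbf x),t)$ fixed and shrink the Gaussian time scale to $\varepsilon_0 t$, whereas the paper keeps the Gaussian at scale $t$ and enlarges the set to $U(\sigma(\mathbf x),Mt)$ with $M=4(\widetilde c+1)$, rescaling $t\mapsto t/M$ at the end.
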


\begin{proof}
Let $\mathbf y=\sigma(\mathbf x)$. Then $d(\mathbf x,\mathbf y)=0$. Moreover, by the definition of $A(\mathbf{x},\mathbf{y},\eta)$ (see~\eqref{eq:A}) and the fact that $\|\mathbf{y}\|=\|\mathbf{x}\|$ we have 
\begin{align*}
    A(\mathbf x,\mathbf y,\eta)^2=\|\mathbf{x}\|^2+\|\mathbf{y}\|^2-2\langle \mathbf y,\eta\rangle=2\|\mathbf{y}\|^2-2\langle \mathbf y,\eta\rangle.
\end{align*}

We first prove the upper bound in~\eqref{eq:precise_measure}. Observe that $(\| \mathbf x\|^2+\|\mathbf y\|^2-2\langle \mathbf y,\eta\rangle)/4 t \leq 1/2$ for all $\eta\in U(\mathbf y, t)$. Thus, applying~\eqref{heat:Rosler}, we get 
\begin{equation}
     \begin{split}
         \mu_{\mathbf x} (U(\mathbf y, t))=\int_{U(\mathbf y, t)}d\mu_{\mathbf x} (\eta)
         & \leq e^{1/2} \int_{U(\mathbf y, t)}e^{-(\| \mathbf x\|^2+\|\mathbf y\|^2-2\langle \mathbf y,\eta\rangle)/4 t}\, d\mu_{\mathbf x} (\eta)\\
         &\leq  e^{1/2} \int_{\mathbb R^N} e^{-(\| \mathbf x\|^2+\|\mathbf y\|^2-2\langle \mathbf y,\eta\rangle)/4 t}\, d\mu_{\mathbf x}(\eta)\\
        &\leq Ct^{\mathbf N/2} h_{t} (\mathbf x,\mathbf y)\leq C'\frac{t^{\mathbf N/2}\Lambda(\mathbf x,\mathbf y,t)}{w(B(\mathbf x,\sqrt{t}))},
      \end{split}
\end{equation}
where in the last inequality we have  used~\eqref{eq:main_claim}.

We now turn to prove the lower bound in~\eqref{eq:precise_measure}. 
From Theorem~\ref{teo:1}, the fact that $d(\mathbf{x},\mathbf{y})=0$,~\eqref{eq:Lambda_2t}, and the doubling property~\eqref{eq:doubling}, we deduce that there is a constant $C>0$ being independent of $\mathbf x$, $\sigma\in G$, and $t>0$  such that 
\begin{align*}
    h_{2t}(\mathbf x,\sigma(\mathbf x))\leq C_{u}w(B(\mathbf{x},\sqrt{2t}))^{-1}\Lambda(\mathbf{x},\sigma(\mathbf{x}),2t) \leq C_{u}'w(B(\mathbf{x},\sqrt{t}))^{-1}\Lambda(\mathbf{x},\sigma(\mathbf{x}),t) \leq C h_t(\mathbf x,\mathbf \sigma (\mathbf x)).
\end{align*} 
Hence, using~\eqref{heat:Rosler} applied to $h_{2t}(\mathbf x,\sigma(\mathbf x))$ and $h_t(\mathbf x,\mathbf \sigma (\mathbf x))$ together with~\eqref{eq:Lambda_2t} and the doubling property~\eqref{eq:doubling}, we conclude that there is a constant $\widetilde{c}\geq 0$ independent of $\mathbf{x}$, $\sigma \in G$, and $t>0$ such that 
\begin{equation}\label{eq:int_mux}
\int_{\mathbb{R}^N} e^{-(2\|\mathbf{y}\|^2-2\langle \mathbf{y},\eta\rangle)/8t}\, d\mu_{\mathbf x}(\eta)\leq e^{\widetilde{c}}\int_{\mathbb{R}^N} e^{-(2\|\mathbf{y}\|^2-2\langle \mathbf{y}, \eta\rangle )/4t}\, d\mu_{\mathbf x}(\eta).
\end{equation}
Let $M=4(\widetilde{c}+1)$. We rewrite \eqref{eq:int_mux} by splitting the areas of the integration: 
\begin{equation}\label{eq:IIJJ}\begin{split}
I_U+I_V:&=\int_{U(\mathbf y,Mt)}e^{-(\|\mathbf{y}\|^2-\langle \mathbf y,\eta\rangle) /4t}\, d\mu_{\mathbf x}(\eta)+ \int_{V(\mathbf y,Mt)}e^{-(\|\mathbf{y}\|^2-\langle \mathbf y,\eta\rangle) /4t}\, d\mu_{\mathbf x}(\eta) \\
&\leq e^{\widetilde{c}}\int_{U(\mathbf y,Mt)} e^{-(\|\mathbf{y}\|^2-\langle \mathbf{y}, \eta\rangle )/2t}\, d\mu_{\mathbf x}(\eta) +e^{\widetilde{c}}\int_{V(\mathbf y,Mt)} e^{-(\|\mathbf{y}\|^2-\langle \mathbf{y}, \eta\rangle )/2t}\, d\mu_{\mathbf x}(\eta)=:J_U+J_V. 
\end{split}\end{equation}
Observe that, by the definition of $V(\mathbf{y},Mt)$ (see~\eqref{eq:V}), and the fact that $M=4(\widetilde{c}+1)$, for all $\eta \in V(\mathbf{y},Mt)$ we have
\begin{equation}\label{eq:calc}
    \begin{split}
   \frac{1}{2}e^{-(\|\mathbf{y}\|^2-\langle \mathbf y,\eta\rangle )/4t}&=\frac{1}{2}e^{-(\|\mathbf{y}\|^2-\langle \mathbf y,\eta\rangle )/2t}e^{(\|\mathbf{y}\|^2-\langle \mathbf y,\eta\rangle )/4t} \geq \frac{1}{2}e^{-(\|\mathbf{y}\|^2-\langle \mathbf y,\eta\rangle )/2t}e^{M/4}\\&=\frac{1}{2}e^{-(\|\mathbf{y}\|^2-\langle \mathbf y,\eta\rangle )/2t}e^{\widetilde{c}+1} \geq e^{\widetilde{c}}e^{-(\|\mathbf{y}\|^2-\langle \mathbf y,\eta\rangle )/2t}.
   \end{split}
\end{equation}
Consequently,~\eqref{eq:calc} implies $J_V\leq \frac{1}{2}I_V$. Therefore, by~\eqref{eq:IIJJ}, we get
\begin{align*}
    I_U+\frac{1}{2} I_V \leq J_U.
\end{align*}
Applying Theorem~\ref{teo:1}, we deduce that 
\begin{equation}
    \begin{split}
   \frac{C_l}{2} \frac{\Lambda(\mathbf x,\mathbf y, 2t)}{w(B(\mathbf x,\sqrt{2t}))}&\leq \frac{1}{2} h_{2t}(\mathbf x,\mathbf y)=\frac{1}{2} {\boldsymbol c}_k^{-1} (4t)^{-\mathbf N/2} (I_U+I_V)\leq  {\boldsymbol c}_k^{-1} (4t)^{-\mathbf N/2} J_U\\
   & \leq {\boldsymbol c}_k^{-1} (4t)^{-\mathbf N/2} e^{\widetilde{c}} \mu_{\mathbf x} (U(\mathbf y, Mt) ). 
    \end{split}
\end{equation}
Now the claim follows from~\eqref{eq:Lambda_2t} and the doubling property \eqref{eq:doubling}.  
\end{proof}

}

We want to remark that Theorem \ref{teo:rejeb} extends the  result of Jiu and Li \cite[Theorem 2.1]{JiuLi}, where the behavior of $\mu_{\mathbf x}$ around  $\mathbf x$ is studied. 

\ 

{\bf Acknowledgment.}  The authors want to thank Jean-Philippe Anker for drawing the author's attention to some references.


\begin{thebibliography}{99}


\bibitem{AH}
B. Amri, A. Hammi,
\emph{Dunkl-Schr\"odinger operators\/},
{Complex Anal. Oper. Theory 113, (2019), 1033-1058.}


\bibitem{ADzH}
J.-Ph. Anker, J. Dziuba\'nski, A. Hejna,
	\emph{Harmonic functions, conjugate harmonic functions and the Hardy space $H^1$ in the rational Dunkl setting}, J. Fourier Anal. Appl. 25 (2019), 2356--2418.


	
\bibitem{Dunkl}
	C.F. Dunkl,
	\emph{Differential-difference operators associated to reflection groups\/},
	Trans. {Amer}. Math. 311 {(1989), no. 1,} 167--183{.}

\bibitem{DzH1}
	J. Dziuba\'nski, A. Hejna,
	\emph{Remark on atomic decompositions for the Hardy space $H^1$ in the rational Dunkl setting},
    Studia Math. 251 (2020), no. 1, 89--110..
	
\bibitem{GallardoRejeb}
    L. Gallardo, Ch. Rejeb,
    \emph{Support properties of the intertwining and the mean value operators in Dunkl theory},
    Proc. Amer. Math. Soc. 146, 1 (2018), 145–152.

\bibitem{Helgason} 
  S. Helgason,
  \emph{Differential geometry, Lie groups, and symmetric spaces,}
Pure and Applied Mathematics, 80. Academic Press, Inc. [Harcourt Brace Jovanovich, Publishers], New York-London, 1978. xv+628 pp. 


\bibitem{JiuLi}
    J. Jiu and Z. Li,
    \emph{On the representing measures of {D}unkl's intertwining operator},
    J. Approx. Theory 269 (2021), Paper No. 105605, 10 pp. 

\bibitem{Roesle99}
	M. R\"osler,
	\emph{Positivity of Dunkl's intertwining operator\/},
	Duke Math. J. 98 (1999), no. 3, 445--463.
	

\bibitem{Roesler2003}
	M. R\"osler,
	\emph{A positive radial product formula for the Dunkl kernel\/},
	Trans. Amer.Math. Soc. 355 (2003), no. 6, 2413--2438{.}
	
\bibitem{Roesler3}
	M. R\"osler:
	\emph{Dunkl operators (theory and applications).
	In: Koelink, E., Van Assche, W. (eds.)
	Orthogonal polynomials and special functions} (Leuven, 2002), 93--135.
	Lect. Notes Math. 1817, Springer-Verlag (2003).
	
\bibitem{Roesler-Voit}
	M.~R\"osler, M. Voit,
	\emph{Dunkl theory, convolution algebras, and related Markov processes\/},
in \textit{Harmonic and stochastic analysis of Dunkl processes\/},
{P. Graczyk, M. R\"osler, M. Yor (eds.), 1--112, Travaux en cours 71,}
Hermann, Paris, 2008.


\end{thebibliography}
\end{document}